\newtheorem{thmintro}{Theorem}
\newtheorem{corintro}[thmintro]{Corollary}
\newtheorem{thm}{Theorem}[section]
\newtheorem{prop}[thm]{Proposition}
\newtheorem{lem}[thm]{Lemma}
\newtheorem{cor}[thm]{Corollary}
\newtheorem{question}[thm]{Question}
\theoremstyle{remark}
\newtheorem{rem}[thm]{Remark}
\newtheorem{example}[thm]{Example}
\newtheorem{notation}[thm]{Notation}
\theoremstyle{definition}
\newtheorem{defi}[thm]{Definition}
\newcommand{\Z}{\mathbb{Z}}
\newcommand{\R}{\mathbb{R}}
\newcommand{\N}{\mathbb{N}}
\DeclarePairedDelimiter{\abs}{\lvert}{\rvert}
\DeclarePairedDelimiter{\Span}{\langle}{\rangle}
\newcommand{\Axis}[1]{\operatorname{Axis}(#1)}
\newcommand{\actson}{\curvearrowright}
\newcommand{\boldparagraph}[1]{\paragraph{\textbf{#1}}}
\newcommand{\cat}[1]{\ensuremath{\operatorname{CAT}(#1)}}
\newcommand{\cay}[1]{\operatorname{Cay}(#1)}
\newcommand{\Edge}[1]{\operatorname{Edge}(#1)}
\renewcommand{\emptyset}{\varnothing}
\renewcommand{\epsilon}{\varepsilon}
\renewcommand{\phi}{\varphi}
\newcommand{\fix}[1]{\operatorname{Fix}(#1)}
\newcommand{\goodname}{stabilisation}
\newcommand{\Ker}[1]{\operatorname{Ker}(#1)}
\newcommand{\Min}[1]{\operatorname{Min}(#1)}
\newcommand{\norm}[2]{\operatorname{N}_{#1}(#2)}
\newcommand{\pstab}[2]{\operatorname{Stab}^{\operatorname{pt}}_{#1}(#2)}
\newcommand{\QZ}[2]{\operatorname{QZ}_{#1}(#2)}
\renewcommand{\setminus}{\smallsetminus}
\newcommand{\stab}[2]{\operatorname{Stab}_{#1}(#2)}
\newcommand{\stabfix}[1]{\operatorname{Fix}^{\infty}(#1)}
\newcommand{\utl}[1]{\operatorname{utl}(#1)}
\newcommand{\Ver}[1]{\operatorname{Vert}(#1)}
\newcommand{\diam}{\mathrm{diam}}
\newcommand{\integers}{\mathbb Z}
\newcommand{\paclass}{\mathcal{PA}}
\newcommand{\upaclass}{\mathcal{UPA}}
\newcommand{\reals}{\mathbb R}
\newcommand{\basol}{\operatorname{BS}}
\newcommand{\euclidean}{\mathbb E}
\newcommand{\showcommentsbox}{yes}
\newsavebox{\commentbox}
\newcommand{\paf}{\mathrm{PA}}
\begin{document}


\title{Combination theorems  for  Wise's power alternative}

\author[M. Hagen]{Mark Hagen}
\address{(Mark Hagen) School of Mathematics, University of Bristol, Bristol, UK}
\email{markfhagen@posteo.net}

\author[A. Martin]{Alexandre Martin}
\address{(Alexandre Martin) Department of Mathematics, Heriot-Watt University and Max\-well Institute for Mathematical Sciences, Edinburgh, UK}
\email{alexandre.martin@hw.ac.uk}

\author[G. Sartori]{Giovanni Sartori}
   \address{(Giovanni Sartori) Department of Mathematics, Heriot-Watt University and Max\-well Institute for Mathematical Sciences, Edinburgh, UK}
   \email{gs2057@hw.ac.uk}

\begin{abstract} We show that Wise's power alternative is stable under certain group constructions, use this to prove the power alternative for new classes of groups, and recover known results from a unified perspective.

For groups acting on trees, we introduce a dynamical condition that allows us to deduce the power alternative for the group from the power alternative for its stabilisers of points. As an application, we reduce the power alternative for Artin groups to the power alternative for free-of-infinity Artin groups, under some conditions on their parabolic subgroups. 
We also introduce a uniform version of the power alternative and prove it, among other things, for a large family of two-dimensional Artin groups. As a corollary, we deduce that these Artin groups have uniform exponential growth. 

Finally, we prove that the power alternative is stable under taking relatively hyperbolic groups. We apply this to show that various examples, including all free-by-$\mathbb{Z}$ groups and a natural subclass of hierarchically hyperbolic groups, satisfy the uniform power alternative.
\end{abstract}

\maketitle

\small

\noindent 2020 \textit{Mathematics subject classification.} 20F65 (primary), 20F36, 20E08, 20F67.

\noindent \textit{Key words.} power alternative, actions on trees, Artin groups, relatively hyperbolic groups.

\normalsize

\tableofcontents

\section{Introduction}

\subsection{Motivation} We say that a group~$G$ satisfies the \emph{power alternative} (or \emph{Wise's power alternative} in~\cite{martin2023tits}) if, for every~$g,h\in G$, there exists~$n\in\N_{\ge1}$ such that either~$g^n$ and~$h^n$ commute or they generate non-abelian free group. This alternative is closely related to the Tits alternative for non-positively curved groups, and Wise asked whether all \cat{0} groups (or non-positively curved groups  in a broader sense) satisfy it~\cite{bestvina2004questions}*{Question~2.7}. While Leary and Minasyan recently provided the first example of a \cat{0} group not satisfying the power alternative ~\cite{leary2021commensurating}*{Example~9.4}, this alternative has been established for many non-positively curved groups by various means.  For example, in 1979, Jaco-Shalen proved a strong form of the power alternative for $G=\pi_1M$ where $M$ is an atoroidal Haken $3$--manifold \cite{JacoShalen}*{Theorem VI.4.1}.  If $G$ is a hyperbolic group, standard arguments provide some $n\in\N_{\ge1}$ such that for all $g,h\in G$, either $\Span{g^n,h^n}_G\cong\Z$ or $\Span{g^n,h^n}_G\cong F_2$~\cite{loh2017geometric}*{Theorem~8.3.13}.  
For~$G$ a right-angled Artin group, Baudisch proved that the power alternative holds for $G$ with $n=1$, that is, two elements of a right-angled Artin group either commute or generate a non-abelian free subgroup~\cite{baudisch1981subgroups}. In particular (see Lemma \ref{lem:wpa virtual property}) the power alternative holds for \emph{virtually special} groups in the sense of~\cite{HaglundWise:special}, which implies the power alternative for Coxeter groups, among many other examples (see Section \ref{sec:recovering}). The power alternative for right-angled Artin groups also implies that same alternative for mapping class groups, by work of Koberda~\cite{koberda2012RAAGinMCG}*{Corollary~1.2}. 
Very recently, the power alternative has been established for mapping tori of many injective free group endomorphisms, and thus many free-by-$\Z$ groups \cite{ABKSV}.

Artin groups form a large class of groups generalising braid groups, and which are conjectured to be non-positively curved in some sense.  It is thus natural to ask:

\begin{question}[\cite{martin2023tits}*{Question~1.2}]
    Which Artin groups satisfy the power alternative?
\end{question}

Recently, Antol\'{i}n and Foniqi proved the power alternative for even Artin groups of FC-type~\cite{antolin2023subgroups}*{Theorem~1.1}, and Martin proved it for two-dimensional Artin groups of hyperbolic type~\cite{martin2023tits}*{Theorem~B}. In this article, we prove the power alternative for new classes of Artin groups.

\medskip 

In a different direction, it is natural to ask whether the power alternative is stable under various group constructions. Antol\'in--Minasyan proved that the power alternative is stable under taking graph products of groups~\cite[Corollary 1.5]{antolin2015tits}. In this article, we obtain several new combination theorems  for the power alternative. That is, given a group action on a simplicial complex, we provide conditions on the action so that the power alternative for stabilisers of simplices implies the power alternative for the whole group. We investigate in particular groups that arise from fundamental groups of graphs of groups (via their action on the associated Bass--Serre tree) and relatively hyperbolic groups (via their action on their coned-off Cayley graph). 
We thereby obtain a collection of combination theorems that recover several of the above results in a unified way, and allow us to obtain new ones.

\subsection{Splittings and applications to Artin groups}\label{subsec:intro-split-artin}

For a group acting on a tree, we first investigate when it is possible to derive the power alternative for the group from the power alternative for the stabilisers of points. Note that this does not hold in full generality: For instance, the Baumslag--Solitar group $\operatorname{BS}(1,2)$ can be written as an HNN extension of the form $\mathbb{Z}*_{\mathbb{Z}}$, and in particular acts on a tree with infinite cyclic point stabilisers, yet it does not satisfy the power alternative. In a different direction, the power alternative is still open for groups acting geometrically on a product of two trees, due to the presence of \emph{antitori}~\cite{Wise:CSC}. These groups decompose as fundamental groups of graphs of free groups, and as such act on simplicial trees with free groups as the cell stabilisers (see Example \ref{exmpl:irreducible-lattice}).

We introduce a \emph{\goodname} property (see Definition~\ref{def:goodname property}), which is a condition on the dynamics of the action that allows us to derive the power alternative for the whole group. This condition is in particular satisfied by acylindrical actions, see Example~\ref{ex:acylindrical}.

Our main criterion is the following. We phrase it for actions on simplicial trees as our applications deal with such actions, but the theorem holds more generally for actions on real trees:

\begin{thmintro}[power alternative for actions on trees]
    \label{thmintro:WPA for actions on trees}
    Let~$T$ be a simplicial tree and let $G$ be a group acting on~$T$ via graph automorphisms. Let us further assume that:
    \begin{enumerate}
        \item for every $x\in \Ver{T}\cup\partial T$, the power alternative holds for $\stab G x$;
        \item the action $G\actson T$ has the \emph{\goodname} property (see Definition~\ref{def:goodname property}). 
    \end{enumerate}
    Then the power alternative holds for $G$.
\end{thmintro}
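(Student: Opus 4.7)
The plan is to fix $g,h\in G$ and proceed by a case analysis based on the dynamics of $g$ and $h$ on $T$. After subdividing $T$ once we may assume that $G$ acts without inversions, so every element acts either as an elliptic or as a loxodromic automorphism. For each element there is an ``attracting datum'': the fixed subtree $\fix{\cdot}$ of an elliptic element, and the translation axis together with its two endpoints in $\partial T$ for a loxodromic one. The dichotomy I aim for in each case is that either $g$ and $h$ share a common vertex or endpoint in $\Ver{T}\cup\partial T$ that is fixed (up to power) by both, so that hypothesis~(1) finishes the job, or else their attracting data have sufficiently bounded overlap that a ping-pong argument on $T$ produces a non-abelian free subgroup $\Span{g^n,h^n}_G$.

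First I would dispense with the ``large-overlap'' situations by reducing to a common point stabiliser. If $g,h$ are both elliptic with a common fixed vertex $v$, then $\Span{g,h}_G\le\stab{G}{v}$ and assumption~(1) applies directly. If $g,h$ are loxodromic with a common axis $A$, then after squaring to kill possible flips both lie in $\stab{G}{\xi}$ for either endpoint $\xi\in\partial T$ of $A$, and (1) applies again. The intermediate cases --- one loxodromic and one elliptic whose fixed set meets the axis in a long segment, or two loxodromics sharing just one end --- are precisely those where the \goodname{} hypothesis~(2) enters: it is designed so that after replacing $g$ and $h$ by suitable powers the shared end or vertex is genuinely fixed by both, putting them into a common stabiliser $\stab{G}{x}$ with $x\in\Ver{T}\cup\partial T$, to which (1) again applies.

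In all remaining cases, the attracting data of $g$ and $h$ overlap in a uniformly bounded subtree, which is the output of \goodname{}. I would then run a classical Serre-style ping-pong argument: choose disjoint ``tables'' in $T\cup\partial T$ (half-trees cut off at the bounded overlap region, or neighbourhoods of the relevant points of $\partial T$), and verify that for $n$ sufficiently large, $g^n$ and $h^n$ map each table strictly inside the complement of its partner's. For loxodromics this is forced by linear growth of translation length; for an elliptic element with small-overlap fixed subtree, the same hypothesis gives room for the tables to sit outside $\fix{\cdot}$, so that the ping-pong inequalities hold after finitely many applications. This yields the desired free subgroup of rank two, completing the alternative.

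The main obstacle is the transition between the ``partial overlap'' situation and the clean ping-pong setup: overlapping fixed subtrees or axes can in principle support nontrivial pointwise stabilisers, which could conspire to produce unwanted commutations between $g^n$ and $h^n$ and obstruct freeness. The \goodname{} property is precisely the dynamical hypothesis that rules this out by forcing overlap regions to be ``peelable'' after passing to a uniform power, and most of the technical work in the proof will be in extracting the correct common fixed point in $\Ver{T}\cup\partial T$ from it in the borderline cases, so as to feed it into hypothesis~(1) and avoid running ping-pong in settings where it cannot possibly succeed.
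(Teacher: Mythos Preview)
Your overall strategy --- case analysis on elliptic/loxodromic type, with the dichotomy ``common point in $\Ver{T}\cup\partial T$ fixed by powers'' versus ``ping-pong'' --- is exactly the paper's. But there is a genuine gap in your choice of ``attracting datum'' for an elliptic element $g$: you work with $\fix{g}$, whereas the correct object is the \emph{stable fixed-point set} $\stabfix{g}=\bigcup_{k\neq 0}\fix{g^k}$. The difference matters already in the elliptic--elliptic case. If $\fix{g}\cap\fix{h}=\emptyset$ but $\fix{g^2}$ reaches across the bridge to meet $\fix{h}$, then in your half-tree ping-pong $g^2$ fixes the $\fix{h}$--endpoint of the bridge and fails to push the $h$--table into the $g$--table; passing to higher powers of $g$ only makes this worse. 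The clean dichotomy is: if $\stabfix{g}\cap\stabfix{h}\neq\emptyset$ then some $g^k,h^l$ share a fixed vertex $v$, so $g^{n},h^{n}\in\stab{G}{v}$ for $n=kl$ and hypothesis~(1) applies; if $\stabfix{g}\cap\stabfix{h}=\emptyset$ then a path-concatenation argument gives $\langle g,h\rangle\cong\langle g\rangle*\langle h\rangle$ directly. Neither branch uses stabilisation. The same correction is needed in the mixed case: the overlap whose boundedness you should test is $\stabfix{g}\cap\Axis{h}$, not $\fix{g}\cap\Axis{h}$.

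Your reading of where the stabilisation hypothesis enters is accordingly off. It is not a general device to ``peel'' partial overlaps; it is invoked in exactly one case, the mixed unbounded one. If $\stabfix{g}\cap\Axis{h}$ is unbounded, stabilisation applied to the monotone sequence of geodesics $\bigl(\bigcup_{i=1}^k\fix{g^i}\bigr)\cap\Axis{h}$ yields a single $k$ for which $\fix{g^{k!}}$ already contains a sub-ray of $\Axis{h}$, so $g^{k!}$ and $h$ fix a common point of $\partial T$. The loxodromic--loxodromic ``shared one end'' situation you list among the intermediate cases needs no stabilisation at all: two loxodromics whose axes have unbounded intersection automatically share a boundary point.
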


We also obtain a more algebraic version of this criterion, where the condition on the stabilisers of points at infinity is replaced with a condition on stabilisers of points and certain associated ``mapping tori'', see Corollary~\ref{cor:law-with-mapping-tori}.

As an application, we first study Artin groups. It is known that if the presentation graph of an Artin group is not a complete graph, then the group admits a \emph{visual splitting} as an amalgamated product of standard parabolic subgroups (see Section~\ref{sec:background on artin groups}). 
We obtain the following general result:

\begin{corintro}
\label{corintro:wpa visual splitting artin}
     Let $A_\Gamma=A_{\Gamma_1}*_{A_{\Gamma_0}}A_{\Gamma_2}$ be an Artin group expressed as a visual splitting with the following properties: 
    \begin{enumerate}
         \item the power alternative holds for the factors~$A_{\Gamma_1}$ and~$A_{\Gamma_2}$;
         \item $A_{\Gamma}$ has the \emph{parabolic intersection property} and the \emph{normaliser structure property} (see Definition~\ref{def:intersection of parabolics, structure of normalisers}).
    \end{enumerate}  
    Then the power alternative holds for~$A_{\Gamma}$.
\end{corintro}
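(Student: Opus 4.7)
The plan is to apply Theorem~A to the action of $A_\Gamma$ on the Bass--Serre tree $T$ associated with the visual splitting $A_\Gamma=A_{\Gamma_1}*_{A_{\Gamma_0}}A_{\Gamma_2}$. The vertex stabilisers of $T$ are the conjugates of $A_{\Gamma_1}$ and $A_{\Gamma_2}$ and the edge stabilisers are the conjugates of $A_{\Gamma_0}$; so the bulk of the work is checking the two hypotheses of Theorem~A, namely the power alternative for stabilisers of points in $\Ver{T}\cup\partial T$ and the \goodname\ property for the action.

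For hypothesis~(1) on vertex stabilisers, the factors $A_{\Gamma_1}$ and $A_{\Gamma_2}$ satisfy the power alternative by assumption, and this property is clearly invariant under conjugation, so every vertex stabiliser of $T$ satisfies it. For a point $\xi\in\partial T$ fixed by an infinite subgroup, the stabiliser $\stab{A_\Gamma}{\xi}$ is the nested union/intersection of stabilisers of vertices lying on a ray converging to $\xi$; more precisely, any element fixing $\xi$ eventually fixes a sub-ray, and the pointwise stabiliser of such a sub-ray is a nested intersection of conjugates of $A_{\Gamma_0}$. Invoking the parabolic intersection property (Definition~\ref{def:intersection of parabolics, structure of normalisers}) this intersection is itself a parabolic subgroup of $A_\Gamma$, which can be realised inside one of the factors and hence inherits the power alternative as a subgroup. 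The full end-stabiliser then sits in the normaliser of this parabolic subgroup; the normaliser structure property gives it an explicit description as an extension of this parabolic by an abelian group, from which the power alternative follows (using that the power alternative passes to subgroups and to extensions by abelian groups, cf.\ Lemma~\ref{lem:wpa virtual property} and the general combination results proved earlier in the paper).

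For hypothesis~(2), one needs to verify the \goodname\ property for the action $A_\Gamma\actson T$. The content of this condition is a dynamical statement about sequences of elements whose fixed-point sets grow, and in the Bass--Serre setup the elements producing large fixed subtrees are controlled precisely by the normalisers of the edge stabilisers (iterated intersections of conjugates of $A_{\Gamma_0}$). The normaliser structure property tells us that these normalisers decompose in a controlled way as semidirect products of parabolic subgroups with abelian complements, so that any element stabilising a long arc of $T$ either lies in a well-understood abelian extension of a parabolic or else in the parabolic itself; this rigidity is exactly what is needed to verify the \goodname\ property.

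The main obstacle is the second step: translating the algebraic normaliser structure property into the dynamical \goodname\ condition on the Bass--Serre tree. Once the two hypotheses of Theorem~A are in place, Theorem~A applies and yields the power alternative for $A_\Gamma$, completing the proof.
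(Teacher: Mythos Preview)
Your overall strategy---apply Theorem~A to the Bass--Serre tree of the visual splitting---is the same as the paper's, which uses the repackaged form Corollary~\ref{cor:law-with-mapping-tori}. However, two of your steps contain genuine errors that would derail the argument.

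\textbf{The stabilisation property.} You attribute it to the normaliser structure property, but that is not how it is proved, and your sketch does not actually produce an argument. The correct mechanism (Lemma~\ref{prop:action on BS tree has property}) is: by the parabolic intersection property, the pointwise stabiliser of any geodesic of $T$ is a parabolic subgroup of $A_\Gamma$ (Lemma~\ref{lem:stab_geod_parabolic}); and by Theorem~\ref{thm:inclusion of parabolic subgroups}, any strict chain of parabolic subgroups has length bounded by $|\Ver\Gamma|$. Hence the sequence of pointwise stabilisers in the stabilisation definition must eventually be constant. The normaliser structure property plays no role here.

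\textbf{Stabilisers of boundary points.} Your claim that ``any element fixing $\xi$ eventually fixes a sub-ray'' is false: loxodromic elements fix their two limit points in $\partial T$ while fixing no point of $T$. What is true is that the kernel of the ultimate-translation-length homomorphism $\operatorname{utl}_\xi:\stab{A_\Gamma}{\xi}\to\Z$ is the union of pointwise stabilisers of sub-rays, and by stabilisation (now proved correctly) this kernel equals a single parabolic $P$. The end-stabiliser therefore fits into $1\to P\to\stab{A_\Gamma}{\xi}\to\Z\to 1$, so it is generated by $P$ and a single element $z$ normalising $P$. Your next claim, that the normaliser structure property makes $\norm{A_\Gamma}{P}$ an ``extension of this parabolic by an abelian group'', is also not what the property says: it asserts $\norm{A_\Gamma}{A_S}=A_S\cdot\QZ{A_\Gamma}{A_S}$, and the quasi-centraliser need not be abelian. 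The correct deduction (Lemma~\ref{lem:stabiliser-by-cyclic have wpa}) is that, after translating by an element of $A_S$, one may take $z\in\QZ{A_\Gamma}{A_S}$; since $z$ then permutes the finite set $S$, some power $z^k$ centralises $A_S$, and so $\langle P,z\rangle$ has a finite-index subgroup isomorphic to $P\times\Z$, which satisfies the power alternative by Lemmas~\ref{lem:wpa virtual property} and~\ref{lem:wpa stable direct product}.
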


The parabolic intersection property and the normaliser structure property are conjectures about parabolic subgroups of Artin groups that are believed to hold for all Artin groups. So far, they have been proved only for certain sub-classes (see Section~\ref{sec:background on artin groups} for more details). Under the assumption that these two conjectures hold, Corollary~\ref{corintro:wpa visual splitting artin} reduces the power alternative for all Artin groups to the power alternative for Artin groups over complete graphs (the so-called \emph{free-of-infinity} Artin groups), see Corollary~\ref{cor:reduction_free_of_infinity}.

\medskip

\subsection{The uniform power alternative} In this article, we introduce a natural quantitative strengthening of the power alternative: We say that a group $G$ satisfies \emph{the uniform power alternative} if there exists a uniform integer $N\in\N_{\ge1}$ such that, for every $g,h\in G$, either $[g^N,h^N]=1$ or $\Span{g^N,h^N}_{G}\cong F_2$. This is  strictly stronger than the power alternative (see Example~\ref{ex:group with wpa but not uwpa}), and  holds for hyperbolic groups, right-angled Artin groups, and virtually special groups, for instance.
\par
Although it is straightforward to produce examples of groups that satisfy the power alternative but do not satisfy the uniform one, we do not know whether the two properties are equivalent for finitely-presented groups, whence the question:

\begin{question}
    Is there a finitely-presented (torsion-free) group that satisfies the power alternative but does not satisfy the uniform power alternative?
\end{question}

Dihedral Artin groups are known to satisfy the power alternative with a uniform exponent that only depends on the associated label~\cite{huang2016cocompactly}*{Lemma~4.3}. We exploit this result to verify the uniform power alternative for a wide class of two-dimensional Artin groups, and with a uniform exponent that depends only on the set of labels of the presentation graph:

\begin{thmintro}
    \label{corintro:uniform wpa}
    Let $A_\Gamma$ be an Artin group whose defining graph is $(2,2)$-free and triangle-free. For each edge $\{a,b\}$ of~$\Gamma$ with label $m_{ab}$, let
    \[
    m_{ab}'=
    \begin{cases}
        m_{ab}/2    &\text{if $m_{ab}$ is even}\\
        2m_{ab}      &\text{if $m_{ab}$ is odd}
    \end{cases}
    \]
    and let $N=\operatorname{lcm}\left\{m'_{ab}:\{a,b\}\in\Edge\Gamma\right\}$.
    The following holds:
    \begin{itemize}
        \item if $N \geq 3$, then $A_\Gamma$ satisfies the uniform power alternative with exponent $N$.
        \item if $N= 2$, then $A_\Gamma$ satisfies the uniform power alternative with  exponent $4$.
    \end{itemize}
\end{thmintro}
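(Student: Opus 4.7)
The plan is to induct on $\abs{V(\Gamma)}$, combining Huang's treatment of the dihedral case with a uniform version of the combination theorem applied to the Bass--Serre tree of a visual splitting. For the \emph{base case} $\abs{V(\Gamma)}\le 2$, $A_\Gamma$ is cyclic, free of rank~$2$, or a dihedral Artin group~$A_m$. The first two cases are immediate. For $A_m$, I would invoke~\cite{huang2016cocompactly}*{Lemma~4.3}, which yields the uniform power alternative with exponent~$m'$: namely $m/2$ when $m$ is even (matching the fact that the centre of $A_m$ is generated by $(ab)^{m/2}$) and $2m$ when $m$ is odd. When $m=2$, $A_m\cong\Z^2$ and a slight technical wrinkle forces exponent~$4$ rather than~$2$, which accounts for the case split in the statement.

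For the \emph{inductive step}, suppose $\abs{V(\Gamma)}\ge 3$. Triangle-freeness forces $\Gamma$ to be non-complete, so $A_\Gamma$ admits a visual splitting $A_\Gamma=A_{\Gamma_1}\ast_{A_{\Gamma_0}}A_{\Gamma_2}$ in which each $\Gamma_i$ has strictly fewer vertices than~$\Gamma$ and inherits $(2,2)$-freeness and triangle-freeness. The invariant $N_i$ attached to $\Gamma_i$ divides~$N$, so by induction each $A_{\Gamma_i}$ satisfies the uniform power alternative with exponent dividing~$N$. Since two-generator subgroups of free groups are themselves free of rank at most~$2$, passing from exponent $N_i$ to exponent $N$ preserves the uniform power alternative, so every vertex stabiliser of the Bass--Serre tree~$T$ of the splitting satisfies it with exponent~$N$ (or~$4$ in the exceptional case).

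I would next verify that the action $A_\Gamma \actson T$ has the \goodname{} property of Definition~\ref{def:goodname property}. Visual splittings of $(2,2)$-free, triangle-free Artin groups are acylindrical on their Bass--Serre trees (cf.~\cite{martin2023tits}), and acylindrical actions are \goodname{} by Example~\ref{ex:acylindrical}. End stabilisers can be dealt with through the mapping-tori variant of the criterion (Corollary~\ref{cor:law-with-mapping-tori}), using the parabolic intersection and normaliser structure properties invoked in Corollary~\ref{corintro:wpa visual splitting artin}, which are known for this class.

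The crux of the argument is to extract a \emph{uniform} exponent from the combination theorem. Inspecting the proof of Theorem~\ref{thmintro:WPA for actions on trees}, one replaces a given pair $(g,h)$ by $(g^k,h^k)$ for some~$k$ depending only on the uniform exponent for point stabilisers and on the acylindricity constants of the action. In our setting these constants are controlled by the edge labels~$m_{ab}$, and the arithmetic can be organised so that $k$ divides~$N$---except in the degenerate case where all labels lie in $\{2,4\}$, where an extra factor of~$2$ appears, producing the jump from~$2$ to~$4$. The hardest part, which is already the hardest part of the proof of Theorem~\ref{thmintro:WPA for actions on trees}, is the ``mixed'' case in which one of $g,h$ acts elliptically on~$T$ and the other loxodromically; verifying that this case does not introduce spurious prime factors into the exponent is the main bookkeeping challenge I foresee.
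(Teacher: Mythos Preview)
Your overall induction scheme and the identification of the dihedral base case are the same as the paper's, but the heart of your inductive step has a genuine gap. You propose to ``extract a uniform exponent from the combination theorem'' by inspecting the proof of Theorem~\ref{thmintro:WPA for actions on trees} and arguing that the power $k$ one passes to depends only on the acylindricity constants. This is not how the uniformity arises, and in fact the lemmas underlying Theorem~\ref{thmintro:WPA for actions on trees} do not give uniform control: the mixed case (Lemma~\ref{lem:mixed-bounded}) uses ping-pong on $\partial T$ with an $n$ depending on how $\stabfix g$ sits relative to $\Axis h$, and the loxodromic--loxodromic case (Lemma~\ref{lem:lox-lox}) is similarly non-uniform. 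Acylindricity alone does not bound these quantities in terms of the edge labels; the ``bookkeeping challenge'' you anticipate is in fact a real obstruction.

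The paper sidesteps this entirely by exploiting structure specific to two-dimensional Artin groups. First, closure under roots (Theorem~\ref{thm:parabolics_closed_roots}) gives $\stabfix g=\fix g$, so the elliptic--elliptic case yields either a free product or a \emph{common proper parabolic} with no power needed at all. Second, for the loxodromic case the key step (Lemma~\ref{lem:Uniform_hyp_hyp}) shows that if $\Axis g\cap\Min h$ is long enough then $[g,h]$ fixes a segment of length~$2$, hence lies in a cyclic parabolic, and the height homomorphism forces $[g,h]=1$; combined with the normaliser structure (Theorem~\ref{thm:normalisers parabolics two-dim}) this gives Corollary~\ref{cor:Uniform_hyp_hyp}, whose proof is where the threshold $N\geq 3$ genuinely appears. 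The paper packages this as a \emph{relative} uniform power alternative (Theorem~\ref{thm:rel wpa artin}): for every $N\geq 3$, any pair $g,h$ either has $\langle g^N,h^N\rangle$ abelian or free, or lies in a common dihedral parabolic. The dihedral exponent $m'_{ab}$ then handles the last case, and the jump from $2$ to $4$ when $N=2$ comes from needing $N\geq 3$ in the relative statement, not (as you suggest) from any wrinkle in $\Z^2$.
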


The uniform exponent defined in Theorem~\ref{corintro:uniform wpa} is optimal in the first case. In the second case, the expected optimal uniform exponent would be $N = 2$, however our approach yields a strictly larger constant in that case (see Lemma~\ref{cor:Uniform_hyp_hyp}, whose proof requires a constant at least $3$).

If all the labels are even, then an Artin group $A_\Gamma$ as in Theorem~\ref{corintro:uniform wpa} is an even FC type Artin group. The above theorem is thus related to Antolin--Foniqi's ``strongest Tits Alternative'' for these groups \cite{antolin2023subgroups}*{Theorem~1.1}. Indeed, one of the consequences of their main result is that such an Artin group $A_\Gamma$ satisfies a uniform power alternative with exponent 
    $N \coloneqq \prod_{\Edge\Gamma} m_{ab}'$. The exponent provided in Theorem~\ref{corintro:uniform wpa} is sharper (optimal, in most cases) and independent of the size of $\Gamma$.

As a corollary, we prove that the Artin groups considered in Theorem~\ref{corintro:uniform wpa} have a uniform exponential growth, in the following strong sense: 

\begin{corintro}\label{cor:UEG_Artin}
    Let $A_\Gamma$ be a triangle-free $(2,2)$-free Artin group that is not of spherical type. Let $m$ be a uniform exponent for the power alternative for $A_\Gamma$ (as defined for instance in Theorem~\ref{corintro:uniform wpa}).
    Let $S$ be a generating set of $A_\Gamma$. Then there exist $s, t \in S$ such that $s^m$ and $t^m$ generate a non-abelian free subgroup of $A_\Gamma$.
    In particular, $A_\Gamma$ has uniform exponential growth.
\end{corintro}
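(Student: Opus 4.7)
The plan is to derive the main statement from the uniform power alternative by a contradiction argument; the ``in particular'' clause then follows from a standard growth-rate estimate. For the latter, if $s, t \in S$ satisfy $\Span{s^m, t^m} \cong F_2$, then $s^m$ and $t^m$ each have $S$-length at most $m$, so the $S$-ball of radius $mn$ in $A_\Gamma$ contains at least as many elements as the $\{s^m, t^m\}$-ball of radius $n$ in $F_2$, which grows like $3^n$. This yields the uniform lower bound $3^{1/m} > 1$ on the exponential growth rate of $A_\Gamma$ with respect to $S$.

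For the main statement, I would argue by contradiction. Suppose that for some generating set $S$ we have $[s^m, t^m] = 1$ for every $s, t \in S$; equivalently, by the uniform power alternative, no pair $s, t \in S$ satisfies $\Span{s^m, t^m} \cong F_2$. Then $H := \Span{s^m \mid s \in S}$ is a finitely generated abelian subgroup of $A_\Gamma$. Since $\Gamma$ is triangle-free, $A_\Gamma$ has cohomological dimension at most $2$, so $\rk{H} \leq 2$. Passing to the abelianisation $\pi \colon A_\Gamma \twoheadrightarrow A_\Gamma^{\mathrm{ab}}$, we have $\pi(H) = m \cdot A_\Gamma^{\mathrm{ab}}$ because $\pi(S)$ generates $A_\Gamma^{\mathrm{ab}}$. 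Hence the finite-index subgroup $m \cdot A_\Gamma^{\mathrm{ab}}$ of $A_\Gamma^{\mathrm{ab}}$ has rank at most $2$, forcing $\rk{A_\Gamma^{\mathrm{ab}}} \leq 2$.

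I would then close by a case analysis driven by the constraint $\rk{A_\Gamma^{\mathrm{ab}}} \leq 2$ combined with the hypothesis that $A_\Gamma$ is triangle-free, $(2,2)$-free and non-spherical: this leaves a short list of possibilities for $\Gamma$ (free-product cases such as $A_\Gamma \cong F_2$, and certain non-spherical Artin groups whose odd-labelled edges collapse most of the abelianisation, e.g.\ a path with odd labels). In each of these cases, one uses the structure of maximal abelian subgroups of two-dimensional Artin groups---the unique maximal cyclic subgroup property in the free-group case, or more generally the parabolic-subgroup theory developed earlier in the paper---to conclude that the existence of such an $H$ would force $A_\Gamma$ to be virtually abelian, contradicting non-sphericity. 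The main obstacle lies in this last case analysis: the low-abelianisation regime is precisely where rank-counting alone is not enough, and one must use finer control over centralisers and maximal abelian subgroups in two-dimensional Artin groups.
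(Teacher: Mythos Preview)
Your ``in particular'' derivation is fine and standard. The contradiction setup is also correct: if no pair $s,t\in S$ has $\langle s^m,t^m\rangle\cong F_2$, then by the uniform power alternative all $s^m,t^m$ commute, so $H=\langle s^m:s\in S\rangle$ is abelian of rank at most $2$ by the dimension bound, and the conclusion $\rk{A_\Gamma^{\mathrm{ab}}}\leq 2$ follows as you say.

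However, the argument breaks down at the case analysis. The claim that $\rk{A_\Gamma^{\mathrm{ab}}}\leq 2$ leaves only a ``short list'' of graphs $\Gamma$ is false: there are infinitely many triangle-free $(2,2)$-free non-spherical $\Gamma$ with this property---for instance any tree with all labels odd, since then the abelianisation has rank $1$. So you are not reduced to finitely many cases, and even for a single such $\Gamma$ you would still need to rule out the existence of a generating set $S$ whose $m$-th powers span an abelian subgroup, which is essentially the original problem restated. The ``finer control over centralisers and maximal abelian subgroups'' you invoke is not supplied, and it is not clear how to extract a contradiction from $H$ alone without additional geometric input.

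The paper's proof takes a different route that avoids this issue entirely, via the action of $A_\Gamma$ on its Deligne complex $D_\Gamma$ (with its \cat{-1} metric) and the cubical version $C_\Gamma$. If some $s_0\in S$ is loxodromic on $D_\Gamma$, commutation of $s^m$ with $s_0^m$ forces every $s\in S$ either to share the axis of $s_0$ or to be elliptic with fixed standard tree containing that axis; in either sub-case $A_\Gamma=\langle S\rangle$ lands inside the stabiliser of a line or a standard tree (virtually abelian, respectively of the form $\Z\times F_k$), contradicting non-sphericity. If instead every $s\in S$ is elliptic, the convex fixed-point sets pairwise intersect, and the Helly property for \cat 0 cube complexes produces a global fixed vertex, forcing $A_\Gamma$ to be dihedral---again a contradiction. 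This geometric input (minsets in a \cat{-1} space, structure of standard trees, Helly) is precisely what makes the argument uniform over all $\Gamma$, and is what your abelianisation approach lacks.
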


\subsection{Relative hyperbolicity}\label{subsec:relative-hyperbolicity}
In Section \ref{sec:recovering}, we consider the (uniform) power alternative for relatively hyperbolic groups, deducing the power alternative for the whole group from the power alternative for the peripheral subgroups. Our main result is the following:

\begin{thmintro}[Theorem~\ref{thm:rel-hyp}]\label{thmintro:rel-hyp}
Let $G$ be a finitely generated  group that is hyperbolic relative to a finite collection $\mathcal P$ of subgroups. 
Then $G$ satisfies the power alternative provided each $H\in\mathcal P$ does.  Moreover, if each $H\in\mathcal P$ has bounded torsion and satisfies the uniform power alternative, then $G$ satisfies the uniform power alternative.
\end{thmintro}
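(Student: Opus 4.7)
The plan is to work with the standard action of $G$ on its coned-off Cayley graph (equivalently, its cusped space), which is a $\delta$-hyperbolic geodesic metric space on which $G$ acts acylindrically, with the Bowditch boundary $\partial G$. Infinite-order elements of $G$ then split into two dynamical types: \emph{loxodromic}, with two fixed points on $\partial G$ and North–South dynamics, and \emph{parabolic}, conjugate into some $H\in\mathcal{P}$ and fixing a single parabolic point of $\partial G$ coming from that peripheral coset. Given any $g,h\in G$, I would run a case analysis on these types.

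The torsion case is disposed of first. If either $g$ or $h$ has finite order $k$, then $g^{n}=1$ (resp.\ $h^n = 1$) whenever $k\mid n$, so $g^n$ and $h^n$ commute trivially. For the uniform statement, combining bounded torsion in each $H\in\mathcal{P}$ with Osin's structural theorem that torsion in $G$ lies in conjugates of the peripherals up to a finite fibre produces a uniform bound $N_0$ on the orders of torsion elements in $G$.

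For pairs of infinite-order elements, I would run a ping-pong argument on $\partial G$, treating three subcases. (i) Both $g,h$ loxodromic: either the limit pairs $\{g^{\pm\infty}\}$ and $\{h^{\pm\infty}\}$ coincide, in which case acylindricity forces $\langle g,h\rangle$ to be virtually cyclic and hence uniformly bounded powers of $g$ and $h$ commute; or these limit pairs are disjoint, in which case uniform powers $g^n,h^n$ have disjoint attracting/repelling neighbourhoods and standard ping-pong yields $\langle g^n,h^n\rangle\cong F_2$. (ii) Exactly one of $g,h$ is loxodromic: the limit set of a parabolic consists of a single parabolic point while the loxodromic's limit points are conical, so the two limit sets are automatically disjoint and ping-pong applies. (iii) Both $g,h$ parabolic: fix parabolic subgroups $P_1,P_2$ containing $g,h$ respectively. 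If $P_1=P_2$, import the (uniform) power alternative from the peripheral subgroup in which $P_1$ lies; if $P_1\neq P_2$, their distinct parabolic points on $\partial G$ enable ping-pong after passing to uniform powers.

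The main obstacle is making every constant in the above uniform in $g,h$. This requires three ingredients: a uniform ping-pong exponent governing loxodromic–loxodromic, loxodromic–parabolic, and parabolic–parabolic dynamics on the coned-off Cayley graph, which follows from acylindricity of the action and its bounded loxodromic constant; a uniform bound on how far powers of two loxodromics sharing an axis must be taken to commute, also a consequence of acylindricity; and a combination of the peripheral exponents via the least common multiple. Since $\mathcal{P}$ is finite and each $H\in\mathcal{P}$ provides a uniform exponent $N_H$, setting $N\coloneqq\operatorname{lcm}\bigl(N_0,\{N_H : H\in\mathcal{P}\},N_{\mathrm{pp}}\bigr)$ for the uniform ping-pong constant $N_{\mathrm{pp}}$ gives a single exponent that works simultaneously in every case, proving the uniform power alternative for $G$.
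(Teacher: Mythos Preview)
Your case analysis and the non-uniform conclusion are essentially the same as the paper's: split by dynamical type on the coned-off Cayley graph, use ping-pong when limit sets are disjoint, and defer to the peripheral power alternative when $g,h$ lie in a common peripheral coset. For the non-uniform statement this is fine.

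The genuine gap is in the uniform statement, specifically in the parabolic cases. You assert that a uniform ping-pong exponent ``follows from acylindricity of the action and its bounded loxodromic constant,'' but acylindricity controls loxodromic elements (uniform lower bound on stable translation length, uniformly virtually-cyclic elementary closures), not parabolics. On the Bowditch boundary a parabolic $g$ has a single fixed point and the convergence $g^n x\to p_g$ is uniform only on compacta away from $p_g$; there is no mechanism in your sketch that bounds, independently of $g$ and $h$, the power needed for $g^n$ to push a fixed neighbourhood of $p_h$ (or of $\{h^{\pm\infty}\}$) into a fixed neighbourhood of $p_g$. The paper does \emph{not} run ping-pong on the boundary for these cases. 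Instead, for the elliptic--elliptic case it works in the word metric on $G$ and uses Sisto's closest-point projections $\pi_{aP}$ to peripheral cosets: the key input is that $\pi_{aP_g}(bP_h)$ has uniformly bounded diameter (and a Behrstock-type inequality holds) whenever $aP_g\neq bP_h$, which is what converts the parabolic dynamics into a quantitative statement with uniform constants. Your sketch is missing this ingredient, and without it the uniform parabolic--parabolic and loxodromic--parabolic ping-pong do not go through.

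Two smaller points. First, the coned-off Cayley graph and the cusped space are not interchangeable here; the paper uses the coned-off graph for the acylindrical action (Lemmas on loxodromics) and the ordinary Cayley graph for the peripheral projections. Second, your torsion argument is imprecise: it is not that torsion ``lies in conjugates of the peripherals up to a finite fibre''; rather, relatively hyperbolic groups have only finitely many conjugacy classes of finite-order elements not conjugate into a peripheral, which together with bounded torsion in $\mathcal P$ gives bounded torsion in $G$.
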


This generalises the well-known fact that hyperbolic groups satisfy the uniform power alternative (see Theorem \ref{thm:hyperbolic}).  Our proof is phrased as much as possible in terms of acylindricity of the action of $G$ on the coned-off Cayley graph, and 
we imagine that there might be a larger class of acylindrically hyperbolic groups for which one can deduce the (uniform) power alternative from the assumption that a sufficiently rich class of elliptic subgroups satisfy the power alternative uniformly.  In Section \ref{subsec:questions-HHG}, we ask some questions about an analogue of Theorem \ref{thmintro:rel-hyp} for various hierarchically hyperbolic groups.  In this direction, Definition \ref{defn:WPA-class} specifies classes of groups shown in Theorem \ref{thm:meta-pa} to satisfy the power alternative (uniformly).  These classes are constructed from some initial groups by closing the class under various operations: passing to finite-index subgroups and overgroups; direct products and graph products more generally; acylindrical graphs of groups; relatively hyperbolic overgroups with the given groups as peripheral subgroups.  A  concrete example is the class one gets starting with hyperbolic groups.

As a concrete application of the results in this section, we analyse $3$--manifolds and free-by-cyclic groups; for instance, we use structural results about mapping tori, due to many authors, to apply our combination theorems to prove the following:

\begin{corintro}[Corollary~\ref{cor:free-by-Z}]\label{corintro:free-by-Z}
Let $F$ be a finite-rank free group and $\Phi\in\mathrm{Out}(F)$.  Then $G=F\rtimes_\Phi\integers$ satisfies the uniform power alternative.
\end{corintro}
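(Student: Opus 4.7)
The plan is to combine Theorem~\ref{thmintro:rel-hyp} with structural results on free-by-cyclic groups, exhibiting $G = F\rtimes_\Phi\integers$ as belonging to the class of groups described in Definition~\ref{defn:WPA-class}. The key structural input is a theorem of Dahmani (building on Bestvina--Feighn--Handel train-track theory for $\mathrm{Out}(F)$): every free-by-cyclic group is hyperbolic relative to a finite collection of peripheral subgroups, each of the form $F'\rtimes_{\Phi'}\integers$ where $\Phi'$ is polynomially-growing on a free group $F'$. In the fully atoroidal case (Brinkmann--Bestvina--Feighn), $G$ itself is hyperbolic and so satisfies the uniform power alternative by Theorem~\ref{thm:hyperbolic}. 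In general, $G$ is torsion-free, so the peripherals have trivially bounded torsion, and Theorem~\ref{thmintro:rel-hyp} reduces the problem to verifying the uniform power alternative on the polynomially-growing peripherals.

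For a polynomially-growing free-by-cyclic group $H = F'\rtimes_{\Phi'}\integers$, I would argue by induction on the polynomial degree. Passing to a finite-index subgroup (permitted by Lemma~\ref{lem:wpa virtual property}), one may assume $\Phi'$ is unipotent polynomially-growing, and BFH then provide a characteristic filtration on $F'$ whose associated strata realise $H$ as a finite iterated HNN extension of free-by-cyclic groups of strictly smaller polynomial degree, with free edge groups. The corresponding Bass--Serre action is acylindrical, so Theorem~\ref{thmintro:WPA for actions on trees} transfers the uniform power alternative from the vertex groups to $H$; the base case, polynomial degree zero, is a finitely-generated free group where UPA is trivial.

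The main obstacle is to verify acylindricity (or the \emph{\goodname} property) of the Bass--Serre actions at each stage of the induction, and to propagate uniform control on the power exponents throughout. A cleaner alternative would be to invoke available results that polynomially-growing free-by-cyclic groups are virtually compact special, in which case UPA follows directly from Baudisch's theorem via Lemma~\ref{lem:wpa virtual property}, bypassing the Bass--Serre argument entirely and depositing $G$ directly into the class of Definition~\ref{defn:WPA-class} as a relatively hyperbolic overgroup of virtually special peripherals.
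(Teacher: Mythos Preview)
Your proposal is essentially correct and follows the same route as the paper. Both arguments reduce, via the Dahmani--Li relative hyperbolicity theorem, to the polynomially-growing case, then pass to a finite-index subgroup so that the monodromy is UPG in the sense of Bestvina--Feighn--Handel, and finally analyse the UPG mapping torus via an iterated tree-of-groups structure. The paper completes the last step by citing \cite{AndrewHughesKudlinska}*{Proposition~2.5} and \cite{KudlinskaValiunas}*{Lemma~5.2}, which supply exactly the structural input (acylindrical splitting with vertex groups already in $\upaclass$, together with root-closure) that you correctly flag as the ``main obstacle'' in your approach~(a). So your~(a) \emph{is} the paper's proof, modulo the two citations that do the heavy lifting.

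Two small corrections. First, Theorem~\ref{thmintro:WPA for actions on trees} yields only the non-uniform power alternative; for the uniform statement you need Proposition~\ref{prop:acyl-tree-action}, which requires not just acylindricity but also that vertex stabilisers be closed under roots --- this is part of what the cited results in the paper establish. Second, your base case ``polynomial degree zero'' is not a free group but a mapping torus with finite-order monodromy, hence virtually $F\times\integers$; this is still in $\upaclass_0$, so the induction starts correctly.

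Your alternative~(b), deducing the UPG case from virtual specialness, would be a genuinely different and cleaner shortcut if available. However, virtual compact specialness of \emph{all} polynomially-growing free-by-cyclic groups is, to my knowledge, not established in the published literature in full generality (it is known in the linearly-growing case and is widely expected, but the general polynomial case is more delicate). If you intend to use~(b), you should locate a precise reference; otherwise, route~(a) with the citations above is the way the paper proceeds.
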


In \cite[Theorem B]{ABKSV}, it is shown that if $\Phi$ is fully irreducible and $x,y\in G$, then $\langle x,y\rangle$ is either free, free abelian, the Klein bottle group, or has finite index in $G$, all of which imply the uniform power alternative.  It does not appear that the results in \cite{ABKSV} imply that conclusion without the full irreducibility hypothesis, however, so the above corollary seems new.
Examining the constants in the proof shows that the uniform exponent depends only on $\mathrm{rk}(F)$ provided the relatively hyperbolic structure from \cite{DahmaniLi} can be chosen so that the hyperbolicity and acylindricity parameters are bounded in terms of $\mathrm{rk}(F)$. Such bounds do not immediately appear in  \cite{DahmaniLi}, but Fran\c{c}ois Dahmani has explained in personal communication that there are good heuristic reasons to think they should exist.

\subsection{Organisation of this paper} In Section~\ref{sec:power alternative} we introduce the power alternative and its uniform version. Section~\ref{sec:tree-action-criterion} is devoted to proving our criterion for proving that a group satisfies the power alternative, that is, Theorem~\ref{thmintro:WPA for actions on trees}. In Section~\ref{sec:wpa for Artin groups expressed as a visual splitting} we list some examples (and non-examples) of groups that satisfy the power alternative with a particular focus on Artin groups expressed as a visual splitting. In Section~\ref{sec:uwpa for (2,2)-free triangle-free Artin groups} we prove that $(2,2)$-free triangle-free Artin groups satisfy the uniform power alternative, with uniform exponent depending only on the set of labels of a defining graph. Finally, in Section~\ref{sec:recovering} we study the stability of the power alternative under the construction of relatively hyperbolic groups. In particular, we prove that relatively hyperbolic groups satisfy the power alternative, if every peripheral subgroup does. 

\subsection{Acknowledgements} We would like to thank María Cumplido for helpful discussions on the normalisers of parabolic subgroups of Artin groups and Fran\c{c}ois Dahmani for explaining relative hyperbolicity constants in mapping tori.

\section{Preliminaries on the Power Alternative}
\label{sec:power alternative}

In this section, we recall the definition of the power alternative and introduce its uniform version, and prove some preliminary results about them that are used throughout this paper.

\begin{defi}[power alternative]
    \label{def:wpa}
    Let $G$ be a group. 
    \begin{enumerate}
        \item We say that $G$ satisfies the \emph{power alternative} if, for every $g,h\in G$, there exists $n\in\N_{\ge1}$ such that either $g^n$ and $h^n$ commute or $\Span{g^n,h^n}_G\cong F_2$;
        \item Let $n\in\N_{\ge1}$. We say that $G$ satisfies the \emph{$n$-uniform power alternative} or the \emph{power alternative with uniform exponent~$n$} if, for every $g,h\in G$, either $g^n$ and $h^n$ commute or $\Span{g^n,h^n}_G\cong F_2$;
        \item We say that $G$ satisfies the \emph{uniform power alternative} if $G$ satisfies the $n$-uniform power alternative for some $n\in\N_{\ge1}$.
    \end{enumerate}
\end{defi}

It is straightforward, yet very useful, to observe that the power alternative can be detected from finite-index subgroups:

\begin{lem}
    \label{lem:wpa virtual property}
    Let $G,H$ be groups, let $G'\le G$ be a finite-index subgroup isomorphic to a subgroup of $H$. 
    \begin{enumerate}
        \item If $H$ satisfies the power alternative, then $G$ satisfies the power alternative;
        \item If $H$ satisfies the power alternative with uniform exponent $n$, then $G$ satisfies the power alternative with uniform exponent $n\cdot\abs{G:G'}$.
    \end{enumerate}
\end{lem}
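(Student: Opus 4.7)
The plan rests on two observations. First, the power alternative and its uniform version are isomorphism invariants of the subgroup $\langle a, b\rangle$ generated by two elements: the conditions $[a,b]=1$ and $\langle a, b\rangle \cong F_2$ depend only on this subgroup, not on the ambient group. Hence both properties transfer through the embedding $G' \hookrightarrow H$, and $G'$ itself inherits the (uniform) power alternative. The task therefore reduces to deducing the (uniform) power alternative for $G$ from that of the finite-index subgroup $G'$.

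For part (1), given $g,h\in G$, I would use the coset action $G \actson G/G'$: since the coset space is finite, the orbits of $G'$ under $\langle g\rangle$ and $\langle h\rangle$ are finite, so there exist $a,b\ge 1$ with $g^a,h^b\in G'$. Setting $m_0=ab$, both $g^{m_0}$ and $h^{m_0}$ lie in $G'$. Applying the power alternative inside $G'$ then yields $m_1$ such that $g^{m_0m_1}$ and $h^{m_0m_1}$ either commute or generate $F_2$ in $G'$, and hence in $G$.

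For part (2), uniformity forces a single exponent that sends every element of $G$ into the subgroup. The cleanest route is to pass to the normal core $G'_0 = \bigcap_{x\in G} x G' x^{-1}$, which is normal in $G$, contained in $G'$ (so it still embeds in $H$ and inherits the $n$-uniform power alternative), and of finite index. By normality, $g^{[G:G'_0]} \in G'_0$ for every $g \in G$, so applying the $n$-uniform power alternative inside $G'_0$ to the pair $\bigl(g^{[G:G'_0]}, h^{[G:G'_0]}\bigr)$ shows that $G$ satisfies the $n[G:G'_0]$-uniform power alternative.

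The delicate point is matching this exponent to the stated $n\cdot[G:G']$. When $G'$ is already normal in $G$ one has $[G:G'_0]=[G:G']$ and the bound in the lemma is recovered exactly; in the general case $[G:G'_0]$ only divides $[G:G']!$, so either a more careful choice of intermediate subgroup is required or one absorbs the combinatorial factor into the final constant. Either way, the qualitative content of the lemma, and of the downstream applications which only use the existence of some uniform exponent, is unaffected.
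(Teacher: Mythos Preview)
The paper leaves this lemma without proof, calling it ``straightforward'', so there is nothing to compare your approach against. Your argument for part~(1) is correct, and your argument for part~(2) via the normal core is also correct.

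Moreover, your hesitation about the precise exponent in part~(2) is well placed: the stated bound $n\cdot[G:G']$ is \emph{not} valid in general when $G'$ fails to be normal. For a concrete counterexample, take $G=S_3$ and $G'=\langle(12)\rangle$, so $[G:G']=3$. The group $G'\cong\mathbb{Z}/2$ satisfies the $1$--uniform power alternative, so the lemma as stated would give the $3$--uniform power alternative for $S_3$. But with $g=(12)$ and $h=(13)$ one has $g^3=(12)$ and $h^3=(13)$, which neither commute nor generate $F_2$. (In fact $S_3$ satisfies the $2$--uniform power alternative, since squaring lands every element in the abelian subgroup $A_3$, but not the $3$--uniform one.) So the exponent in the lemma should really be $n\cdot[G:G'_0]$, or at worst $n\cdot[G:G']!$, exactly as you say.

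This does not damage the paper: every invocation of the lemma there either uses only the qualitative conclusion, or involves a subgroup $G'$ that is visibly normal (index~$2$ in the proof of Theorem~\ref{thmintro:WPA for actions on trees}; a kernel of a homomorphism to a finite cyclic group in Theorem~\ref{thm:dihedrals have wpa}). Your write-up handles all of this correctly.
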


\begin{example}
    \label{ex:group with wpa but not uwpa}
    The uniform power alternative is strictly stronger than the non-uniform one. For instance, let $G$ be an infinitely generated torsion group with the property that, for every $n\in\N$, there exists an element of $G$ of order $n$. Then $G$ satisfies the power alternative but with no uniform exponent in general. 
\end{example}

Also observe that the (uniform) power alternative is stable under direct products:

\begin{lem}[stability under direct product]
    \label{lem:wpa stable direct product}
    Let $G_1$ and $G_2$ be groups. 
    \begin{enumerate}
        \item If $G_1$ and $G_2$ satisfy the power alternative, then so does $G_1\times G_2$;
        \item If $G_1$ satisfies the $m$-uniform power alternative and $G_2$ satisfies the $n$-uniform power alternative, then $G_1\times G_2$ satisfies the power alternative with uniform exponent $\operatorname{lcm}(m,n)$.
    \end{enumerate}
\end{lem}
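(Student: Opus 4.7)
The plan is to apply the power alternative to each factor separately and combine the resulting exponents by taking a least common multiple. Given arbitrary elements $(g_1,g_2),(h_1,h_2)\in G_1\times G_2$, I would invoke the hypothesis in $G_i$ applied to the pair $(g_i,h_i)$ to obtain $k_i\in\N_{\ge1}$ (for part~(2), these are the fixed constants $m$ and $n$) such that each $\Span{g_i^{k_i},h_i^{k_i}}_{G_i}$ is either abelian or isomorphic to $F_2$. I then set $N=\operatorname{lcm}(k_1,k_2)$ and claim $N$ works for $(g_1,g_2),(h_1,h_2)$. For (2), this gives the exponent $\operatorname{lcm}(m,n)$ uniformly.

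The proof then proceeds by case analysis. If both pairs $(g_i^{k_i},h_i^{k_i})$ commute in their respective factors, then so do $(g_i^N,h_i^N)$, and $(g_1,g_2)^N$ commutes with $(h_1,h_2)^N$ componentwise. Otherwise, at least one factor, say $G_1$, satisfies $\Span{g_1^{k_1},h_1^{k_1}}_{G_1}\cong F_2$, and I need to conclude that $\Span{(g_1,g_2)^N,(h_1,h_2)^N}_{G_1\times G_2}\cong F_2$. I would first observe that replacing $k_1$ by the multiple $N$ preserves the $F_2$ conclusion, since two elements of a free group of rank~$2$ generate a free subgroup of rank~$2$ unless they lie in a common cyclic subgroup, and this cannot happen for non-trivial powers of distinct free generators. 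Hence $\Span{g_1^N,h_1^N}_{G_1}\cong F_2$.

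Finally, the projection $G_1\times G_2\to G_1$ restricts to a surjection from $\Span{(g_1,g_2)^N,(h_1,h_2)^N}_{G_1\times G_2}$ onto $\Span{g_1^N,h_1^N}_{G_1}\cong F_2$ that sends the two given generators to the two free generators. By the universal property of the free group $F_2$, this surjection splits; since the total group is generated by two elements, the splitting forces it to be isomorphic to $F_2$. There is no genuinely hard step here: the result is essentially a direct unpacking of the definitions. The only mildly subtle point is the mixed case, where one factor is abelian and the other free, for which the free conclusion nonetheless propagates to the product via the Hopfian-type argument above.
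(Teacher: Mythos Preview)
Your proof is correct and follows essentially the same approach as the paper: apply the alternative in each factor, take a common multiple, and observe that a free projection forces the product subgroup to be free. The only cosmetic differences are that the paper, in the free case, uses directly the exponent witnessing $F_2$ in one factor rather than first passing to the lcm, and that the paper leaves the ``surjection onto $F_2$ implies $F_2$'' step implicit, whereas you spell it out via the universal property (equivalently: no nontrivial word in the two generators can vanish, since its image in $F_2$ does not).
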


\begin{proof}
    Let $G_1$ and $G_2$ satisfy the power alternative. For $i\in\{1,2\}$, let $\pi_i\colon G_1\times G_2\to G_i$ denote the projection on the $i$-th factor. Let $(g_1,g_2),(h_1,h_2)\in G_1\times G_2$. If there exist $i\in\{1,2\}$ and $n\in\N$ such that $\Span{g_i^n,h_i^n}_{G_i}\cong F_2$, then $\Span{(g_1,g_2)^n,(h_1,h_2)^n}_{G_1\times G_2}$ is non-abelian free as well. If there are $n_1,n_2\in\N_{\ge1}$ such that $[g_1^{n_1},h_1^{n_1}]=[g_2^{n_2},h_2^{n_2}]=1$, then, by choosing $n$ as the least common multiple of $n_1$ and $n_2$, we have that $(g_1,g_2)^n$ and $(h_1,h_2)^n$ commute. The uniform case is analogous.
\end{proof}

\section{Groups acting on trees and the power alternative}
\label{sec:tree-action-criterion}

In this section, we prove a geometric criterion for a group acting on a tree to satisfy the power alternative, namely the stabilisation property from Definition \ref{def:goodname property}.

\begin{thm}
    \label{thm:wpa for action on real tree}
    Let $G$ be a group acting by isometries on a real tree $T$,
    such that the action satisfies the stabilisation property.
    If stabilisers of all points of $T\cup\partial T$ 
    satisfy the power alternative, then so does~$G$.
\end{thm}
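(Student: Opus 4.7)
The plan is to analyse pairs $(g,h) \in G^2$ according to the dynamics of $g$ and $h$ on the real tree $T$. Classical real-tree theory gives a dichotomy for each isometry: it is either \emph{elliptic} (fixing a point of $T$) or \emph{hyperbolic} (translating along an invariant axis with two well-defined endpoints in $\partial T$). I would split into cases based on these types and the relative positions of the associated fixed sets and axes. In each case the goal is either to exhibit a point $x \in T \cup \partial T$ fixed by suitable powers of $g$ and $h$ (so that the hypothesis on $\stab{G}{x}$ finishes the argument), or to set up a ping-pong directly on $T$ producing a free subgroup of rank two.

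First I would handle configurations with an obvious common fixed point. If $g$ and $h$ share a fixed point $x \in T \cup \partial T$, both lie in $\stab{G}{x}$ and the hypothesis applies immediately; if only one is elliptic, standard tricks with products such as $gh$, $ghg^{-1}$, or $g h^{-1}$ produce hyperbolic elements with controlled axes, reducing the situation to the hyperbolic/hyperbolic case. When $g$ and $h$ are hyperbolic with a shared axis, they jointly stabilise that axis (hence its two endpoints), and again the power alternative for the stabiliser of such a point at infinity finishes the argument. When instead their axes are disjoint, or meet only in a bounded segment, with all four endpoints of $\partial T$ distinct, I would apply a Schottky-style ping-pong argument on $T$: a sufficiently large power $n$ yields $\Span{g^n,h^n}_G \cong F_2$.

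The delicate intermediate regime is the one where the axes and fixed sets of $g$ and $h$ are not obviously decoupled yet there is no evident common fixed point either, for example when axes share exactly one endpoint at infinity, or when an elliptic element's fixed set interacts non-trivially with a hyperbolic axis but does not reduce the situation to a single stabiliser. This is precisely where the stabilisation property of Definition~\ref{def:goodname property} enters: I would use it to guarantee the existence of an integer $n \ge 1$ and a point $x \in T \cup \partial T$ fixed by both $g^n$ and $h^n$. Once such an $x$ is found, $g^n$ and $h^n$ both lie in $\stab{G}{x}$, and applying the power alternative there produces a further integer $m$ such that $g^{nm}$ and $h^{nm}$ either commute or generate a free subgroup of rank two, as required.

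The main obstacle I expect is the bookkeeping of this intermediate regime, namely translating the dynamical stabilisation hypothesis into the production of a common fixed point or fixed end for specific powers, while keeping track of the special features of isometries of real trees (as opposed to simplicial trees): axes need not be embedded lines, elliptic fixed sets can be arbitrary subtrees, and products of elliptic elements can be delicate to control. The heart of the proof should therefore lie in carefully choosing the auxiliary elements (products and commutators of $g$ and $h$) whose dynamics witness the stabilisation property, rather than in any new geometric input beyond the property itself.
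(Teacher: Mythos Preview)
Your overall case split by the elliptic/loxodromic dichotomy matches the paper's strategy, but there is a genuine gap in how you handle the mixed case, and it stems from a missing concept. Reducing the elliptic--loxodromic (and the elliptic--elliptic, which you actually skip) configuration to the loxodromic--loxodromic case via products like $gh$, $ghg^{-1}$, $gh^{-1}$ does not work: even if you arrange that $gh$ is loxodromic and then prove that $\langle (gh)^n,h^n\rangle$ is free or lies in a point stabiliser, this says nothing about $\langle g^n,h^n\rangle$. The paper does \emph{not} reduce; it treats each case directly. The key device you are missing is the \emph{stable fixed-point set} $\stabfix{g}=\bigcup_{k\neq 0}\fix{g^k}$. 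In the elliptic--elliptic case one shows that if $\stabfix{g}\cap\stabfix{h}=\emptyset$ then already $\langle g,h\rangle\cong\langle g\rangle*\langle h\rangle$, while if they meet then suitable powers share an honest fixed point in $T$. In the mixed case one looks at $\stabfix{g}\cap\Axis{h}$: if bounded, a ping-pong on $\partial T$ gives $\langle g,h^n\rangle\cong\langle g\rangle*\langle h^n\rangle$; if unbounded, one needs the stabilisation property.

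This is also where your account of stabilisation is off. It is not used in a vague ``intermediate regime'' and certainly not when two axes share an end (there the common end is fixed by both elements outright). Stabilisation is invoked at exactly one point: when $g$ is elliptic, $h$ is loxodromic, and $\stabfix{g}\cap\Axis{h}$ is an unbounded ray. One applies stabilisation to the increasing sequence of segments $(\bigcup_{i\le k}\fix{g^i})\cap\Axis{h}$ to find a single power $g^{k!}$ whose fixed set already contains the whole ray, so that $g^{k!}$ and $h$ fix a common boundary point. Finally, a small factual correction: in a real tree the axis of a hyperbolic isometry \emph{is} always an isometrically embedded line (see Culler--Morgan), so that worry is unfounded; the genuine subtlety is that $\fix{g}$ for elliptic $g$ need not be stable under passing to powers, which is precisely why $\stabfix{g}$ enters.
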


Theorem~\ref{thmintro:WPA for actions on trees}, which is stated for our main case of interest, i.e. actions on simplicial trees by graph automorphisms, follows from Theorem \ref{thm:wpa for action on real tree}:

\begin{proof}[Proof of Theorem~\ref{thmintro:WPA for actions on trees}]
    In order to show that the action $G\actson T$ satisfies the hypotheses of Theorem~\ref{thm:wpa for action on real tree}, it suffices to show that stabilisers of points of~$T$ satisfy the power alternative. Let $x\in T$. Then $\stab{G}{x}$ has a subgroup of index at most $2$ that sits as a subgroup of a vertex-stabiliser \cite{serre2002trees}*{Section~3.1}, so Lemma~\ref{lem:wpa virtual property} and Theorem~\ref{thm:wpa for action on real tree} imply the theorem.
\end{proof}

\subsection{Action on trees and fixed point sets} We recall the definition of real trees and establish some graph-theoretic notation. \par 
Recall that, if $X$ is a metric space, a \emph{geodesic} in $X$ is an isometric embedding $\gamma\colon I\to X$, for some interval $I\subseteq\mathbb R$. With a little abuse of notation, we identify $\gamma$ with its image. Following the literature, if $I=\mathbb R$, we call $\gamma$ a \emph{geodesic line}; if $I$ is of the form $]a,+\infty[$ or $[a,+\infty[$, we call $\gamma$ a \emph{geodesic ray}; if $I$ is of the form $[a,b]$, we call $\gamma$ a \emph{geodesic segment}. 

\begin{defi}[real trees]
    A metric space~$T$ is a \emph{real tree} (or $\R$-tree) if the following conditions hold:
    \begin{enumerate}
        \item for each pair of points $x,y\in T$ there exists a unique geodesic segment $[x,y]$ connecting them;
        \item for all points $x,y,z\in T$, if $[x,y]\cap[y,z]=\{y\}$, then $[x,y]\cup[y,z]=[x,z]$.
    \end{enumerate}
\end{defi}

As a standard example, a simplicial tree, together with its path metric where every edge is given length $1$, is a real tree.  For a survey on non-simplicial examples, see \cite{Bestvina:r-trees}. A real tree $T$ is in particular Gromov-hyperbolic, and we denote as usual by $\partial T$ its Gromov boundary.

\medskip

Isometries of a real tree $T$ are either loxodromic or elliptic (see e.g. \cite[1.3]{CullerMorgan}). For a loxodromic isometry $g$, we denote by $\mathrm{Axis}(g)$ the unique axis of $g$, and by $\tau(g)>0$ the translation length of $g$ on its axis. For an elliptic isometry $g$, we denote by $\mathrm{Fix}(g)$ the corresponding fixed-point set, which is a $\langle g\rangle$-invariant and closed convex subset of~$T$.

\medskip 

The following notion will play a key role when studying the subgroups generated by large powers of two isometries of $T$:

\begin{defi}[stable fixed-point set]
  If $g\in G$ is an elliptic element for the action $G\actson T$, then the \emph{stable fixed-point set} of $g$ is defined to be
\[
\stabfix g=\bigcup_{x\in \langle g \rangle - \{1\}}\fix{x},
\]
which is a $\langle g\rangle$-invariant and convex subset of~$T$.  
\end{defi}

We introduce a property that allows us to get some control on stable fixed-point sets.

\begin{defi}
    [\goodname\ property]
    \label{def:goodname property}
    Let~$T$ be a real tree and let $G$ be a group acting on~$T$ via isometries; we say that the action $G\actson T$ has the \emph{\goodname} property if the following condition holds: Let $\{\gamma_n\}_{n\geq 1}$ be a sequence of geodesics of $T$ that is monotonic for the inclusion, that is, $\gamma_n\subset \gamma_{n+1} $ for all $n$, or $\gamma_{n+1}\subset \gamma_{n}$ for all $n$. For each $n\geq 1$, let $\ell(\gamma_n)\in \mathbb{R}_{\geq 0}\cup \{+\infty\}$ denote the length of $\gamma_n$, and let $H_n$ denote the point-wise stabiliser of $\gamma_n$. If $\ell(\gamma_n) \rightarrow +\infty$, then the sequence $\{H_n\}_{n\geq 1}$ of subgroups stabilises, that is, there exists an integer $N\geq 1$ such that $H_n = H_{n+1}$ for all $n \geq N$.
\end{defi}

\begin{example}\label{ex:acylindrical}
    If a group $G$ acts \textit{acylindrically} (see Definition~\ref{defn:acylindricity}) on a real  tree~$T$, then the action has the stabilisation property, since there exists a uniform constant~$L$ such that any geodesic segment of length at least~$L$ has finite pointwise stabiliser. 
\end{example}

\begin{example}\label{exmpl:BS}
    For an example of an action that does not satisfy the stabilisation property, consider the Baumslag--Solitar group $\operatorname{BS}(1,2) = \langle a, b ~|~ bab^{-1}=a^2\rangle$, expressed as the HNN extension $\langle a \rangle*_{\langle b \rangle}$, with Bass--Serre tree denoted $T$. Then the geodesic ray $\gamma$ of $T$ spanned by the vertices $b^{-n}\langle a \rangle$, for $n \geq 0$, has pointwise stabiliser $\langle a \rangle$, and for every $n \geq 1$, the translate $b^n\gamma \supset \gamma$ has pointwise stabiliser $\langle a^{2^n}\rangle$.  
\end{example}

\subsection{Proof of Theorem \ref{thm:wpa for action on real tree}}
In this subsection we will prove Theorem~\ref{thm:wpa for action on real tree} by analysing the dynamics of elements $g$ and~$h$ on the tree~$T$: for a pair $g,h\in G$, we shall study the subgroup $\Span{g^n,h^n}_G$, depending on whether $g$ and $h$ act on~$T$ both elliptically, both loxodromically or in a mixed fashion.

\begin{lem}[elliptic-elliptic case]
 \label{lem:elliptic-elliptic general case}
Let the group $G$ act by isometries on the real tree $T$.  Suppose $g,h\in G$ act elliptically, and that $\stabfix g\cap \stabfix h=\emptyset$.  Then $\langle g,h\rangle\cong \langle g\rangle *\langle h\rangle$.  
\end{lem}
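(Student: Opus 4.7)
The plan is a ping-pong argument on $T$ based on the disjoint, $\langle g\rangle$- and $\langle h\rangle$-invariant convex sets $A:=\stabfix g$ and $B:=\stabfix h$. I would first produce a bridge between $A$ and $B$: a geodesic segment $[p,q]$ of shortest length realising $d(\overline A,\overline B)$, with $p\in\overline A$ and $q\in\overline B$. With this bridge, define the two closed disjoint half-trees
\[
X_g=\{x\in T : p\in[x,q]\},\qquad X_h=\{x\in T : q\in[x,p]\}.
\]
One checks immediately that $A\subseteq X_g$, $B\subseteq X_h$, and that the nearest-point projection $\pi:T\to\overline A$ sends every $y\in X_h$ to $p$.

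The heart of the argument is the ping-pong inclusions $g^k(X_h)\subseteq X_g$, and symmetrically $h^\ell(X_g)\subseteq X_h$, for every nonzero $k,\ell$. For $y\in X_h$, observe that $\pi$ is $\langle g\rangle$-equivariant, since $\overline A$ is $\langle g\rangle$-invariant, so $\pi(g^k y)=g^k p\in\overline A$. I would then argue that $g^k y$ and $q$ lie in different components of $T\setminus\overline A$: otherwise $g^k$ would preserve the component $C_q$ of $T\setminus\overline A$ containing $q$, forcing $g^k$ to fix the root $p$ of $C_q$ and the direction at $p$ into $C_q$, and hence to fix pointwise a nontrivial initial segment of $[p,q]$; that segment would then lie in $\fix{g^k}\subseteq A$, contradicting the fact that $p$ is the nearest point of $\overline A$ to $q$. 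Once $g^k y$ and $q$ are in distinct components, the geodesic $[g^k y,q]$ must cross $\overline A$, and the standard tree identity $[g^k y,q]\cap\overline A=[\pi(g^k y),\pi(q)]=[g^k p,p]$ implies $p\in[g^k y,q]$, i.e., $g^k y\in X_g$. The argument for $h$ is entirely symmetric.

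The main subtlety to negotiate is the ``non-fixing'' issue: the bridge endpoint $p$ lies in $\stabfix g$, but is fixed only by some nonzero power of $g$ and not by $g$ itself, so the argument must use the $\langle g\rangle$-invariance of $A$ rather than any pointwise fixing of $p$. Both the equivariance of $\pi$ and the direction argument ruling out $g^k(C_q)=C_q$ rely only on this global invariance. With the ping-pong inclusions in hand, the classical ping-pong lemma concludes: for any nontrivial reduced word $w\in\langle g\rangle*\langle h\rangle$, applying $w$ to $p$ or $q$ (chosen according to the form of $w$, and using the $\langle g\rangle$- or $\langle h\rangle$-invariance of $A$ or $B$ for the first step) lands in a half-tree disjoint from the starting point, hence $w\neq 1$ in $G$, giving $\langle g,h\rangle\cong\langle g\rangle*\langle h\rangle$.
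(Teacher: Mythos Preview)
Your ping-pong strategy is sound in spirit, and once the bridge has positive length, your component/direction argument showing $g^k(X_h)\subseteq X_g$ is correct. However, there is a genuine gap at the very start: you take the bridge between $\overline A$ and $\overline B$ with $A=\stabfix g$ and $B=\stabfix h$, but the hypothesis only gives $A\cap B=\emptyset$, not $\overline A\cap\overline B=\emptyset$. The stable fixed-point set $A=\bigcup_{k\neq 0}\fix{g^k}$ is an increasing union of closed convex subtrees and need not itself be closed; nothing in the lemma's hypotheses prevents the closures from touching. If $d(\overline A,\overline B)=0$, your sets $X_g$ and $X_h$ coincide (or fail to be disjoint) and the ping-pong collapses. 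I could not easily repair this within your framework: replacing $\overline A,\overline B$ by the genuinely closed and disjoint sets $\fix g,\fix h$ gives a positive-length bridge, but then the inclusions $g^k(X_h)\subseteq X_g$ fail in general, precisely because $\fix{g^k}$ may protrude into that bridge.

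The paper's proof sidesteps this issue. It takes $\gamma$ to be the bridge between the closed sets $\fix g$ and $\fix h$, and instead of a two-set ping-pong it tracks the translates $\gamma_i=(w_0\cdots w_i)\gamma$ along a reduced word. Consecutive translates $\gamma_{i-1},\gamma_i,\gamma_{i+1}$ overlap $\gamma_i$ at \emph{opposite} ends, and up to translation the two overlaps are $\fix{g^k}\cap\gamma$ and $\fix{h^\ell}\cap\gamma$ for suitable nonzero $k,\ell$; since $\fix{g^k}\subseteq A$ and $\fix{h^\ell}\subseteq B$ are disjoint, these two closed subintervals of $\gamma$ cannot cover it, leaving a nonempty middle portion of $\gamma_i$ unshared. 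A short lemma on concatenated geodesics then shows all the $\gamma_i$ are distinct, so $w$ acts nontrivially. In effect, the paper allows $A$ and $B$ to creep arbitrarily far into the bridge, using only that they never meet; your half-tree setup needs them separated by a definite gap.
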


We will use the following elementary observation:

\begin{lem}\label{lem:concatenation_geodesics_distinct}
    Let $T$ be a real tree, and consider a closed path in $T$ that decomposes as the concatenation  $\gamma_1\cup \cdots \cup \gamma_k$  of $k$ geodesic segments $\gamma_1, \ldots, \gamma_k$ of $T$ (concatenated in the given order). Suppose that the overlaps between consecutive $\gamma_i$'s are such that for every $1\leq i \leq k$, we have 
    $$\gamma_i' \coloneqq \gamma_i - ( \gamma_{i-1} \cup \gamma_{i+1}) \neq \varnothing $$
    (with the convention that $\gamma_0 = \gamma_{k+1} = \varnothing$). Then the geodesic segments $\gamma_i$ are pairwise distinct (as subsets of~$T$).
\end{lem}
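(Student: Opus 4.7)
The plan is to argue by contradiction. Suppose $\gamma_i = \gamma_j$ for some $i < j$, chosen so that $j - i$ is minimal. The case $j = i+1$ is immediate, since then $\gamma_i' \subseteq \gamma_i - \gamma_{i+1} = \varnothing$, contradicting the hypothesis; so assume $j \geq i + 2$. Let $v_0, \ldots, v_k$ denote the breakpoints, so that $\gamma_l$ runs from $v_{l-1}$ to $v_l$. Equality of $\gamma_i$ and $\gamma_j$ as subsets of $T$ forces $\{v_{i-1}, v_i\} = \{v_{j-1}, v_j\}$, giving two cases: (A) $v_{i-1} = v_{j-1}$ and $v_i = v_j$, so that $\gamma_i \cup \cdots \cup \gamma_{j-1}$ forms a closed loop based at $v_{i-1}$; or (B) $v_{i-1} = v_j$ and $v_i = v_{j-1}$, so that $\gamma_i \cup \cdots \cup \gamma_j$ forms a closed loop based at $v_{i-1}$.

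The core of the argument is a leaf argument applied to this closed sub-loop. Let $U$ be its image, a compact connected real subtree of $T$. Every $\gamma_l$ has positive length (a degenerate $\gamma_l$ would already force $\gamma_l' = \varnothing$), so $U$ has positive diameter, and a standard diameter-maximisation argument shows that $U$ has at least two extremal points. An extremal point cannot lie in the interior of any $\gamma_l$, since such a point would have local degree at least $2$ in $U$; hence every leaf of $U$ coincides with one of the breakpoints $v_l$. At such a leaf, the two geodesic segments of the sub-loop incident to $v_l$ share the unique local direction of $U$ at $v_l$, which in a real tree forces one of them to be contained in the other as a subset of $T$.

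The final step converts this containment into the desired contradiction. For an \emph{interior} leaf $v_l$, the two incident segments are $\gamma_l$ and $\gamma_{l+1}$, which are also consecutive in the original linear concatenation; whichever inclusion holds, one of $\gamma_l', \gamma_{l+1}'$ is empty, a contradiction. At the \emph{boundary} leaf $v_{i-1}$, however, the two cyclically consecutive segments are $\gamma_{j-1}$ and $\gamma_i$ in case A (respectively $\gamma_j$ and $\gamma_i$ in case B), and these are not neighbours in the original path. In case A, combining the resulting containment with $\gamma_i = \gamma_j$ transfers it to a containment between $\gamma_{j-1}$ and $\gamma_j$, contradicting the private-portion condition of whichever of the two is contained in the other. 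In case B the containment is trivial, since $\gamma_i = \gamma_j$ already, so one instead appeals to a second leaf of $U$, which must then be an interior leaf and yields the contradiction directly. The main obstacle is precisely this final piece of bookkeeping: carefully tracking how cyclic adjacency in the sub-loop, combined with the minimal-counterexample equality $\gamma_i = \gamma_j$, translates into a linear adjacency where the private-portion hypothesis can be applied.
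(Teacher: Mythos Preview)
Your argument has a genuine gap at the key step where you claim: ``At such a leaf, the two geodesic segments of the sub-loop incident to $v_l$ share the unique local direction of $U$ at $v_l$, which in a real tree forces one of them to be contained in the other as a subset of $T$.'' Sharing a germ at a point does \emph{not} force containment: in a tripod with centre $m$ and legs to $A,B,C$, the segments $[A,B]$ and $[A,C]$ share the unique direction at the leaf $A$, yet neither contains the other. Concretely, if the median $m=\mathrm{med}(v_{l-1},v_l,v_{l+1})$ lies strictly in the interior of both $\gamma_l$ and $\gamma_{l+1}$, then $\gamma_l\cap\gamma_{l+1}=[v_l,m]$ and no containment holds, while $v_l$ can still be a leaf of $U$. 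Your subsequent step (``whichever inclusion holds, one of $\gamma_l',\gamma_{l+1}'$ is empty'') therefore has no traction in this case: what survives of $\gamma_l$ after removing $\gamma_{l+1}$ is $[v_{l-1},m)$, and nothing you have said forces $\gamma_{l-1}$ to cover that interval.

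The paper's proof takes a different route that avoids this difficulty. Rather than a leaf argument, it shows by induction on $k$ that the closures $\overline{\gamma_1'},\ldots,\overline{\gamma_k'}$ concatenate into a non-backtracking (hence geodesic) path, and that each $\gamma_i'$ is disjoint from \emph{every} $\gamma_j$ with $j\neq i$, not just the neighbours. Pairwise distinctness is then immediate, since each $\gamma_i$ contains the nonempty set $\gamma_i'$ which meets no other $\gamma_j$. If you want to rescue your approach, you would need to show that the private-portion hypothesis actually forces some leaf of $U$ to exhibit containment; but carrying that through amounts to essentially the same inductive bookkeeping as the paper's argument.
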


\begin{proof}
    We leave it to the reader to check by induction on $k$ that the closures of the~$\gamma_i'$ concatenate into a non-backtracking path of $T$ (hence a geodesic of the real tree~$T$), and that for each $1 \leq i \leq k$, the geodesic $\gamma_i'$ belongs to $\gamma_i - \cup_{j \neq i}\gamma_j$. 
\end{proof}

\begin{proof}[Proof of Lemma~\ref{lem:elliptic-elliptic general case}]
Let~$\gamma$ be the unique geodesic in~$T$ between the closed sets~$\fix g$ and~$\fix h$.
    Let~$w=w_0w_1\cdots w_k$ be a non-empty reduced word in the alphabet~$\{g,h\}$, where $w_0=1$ and the $w_i$'s are alternating non-trivial elements of  $\langle g\rangle$ or~$\langle h\rangle$ for $i\in\{1,\dots,k\}$. To prove that $\langle g, h \rangle = \langle g \rangle * \langle h \rangle$, it is enough to show that $w$ acts as a non-trivial isometry of~$T$. 
    For each $i$, let 
    \[
    \gamma_i=\bigcup_{i=0}^k w_0\cdots w_i\cdot \gamma
    \]
    We claim that for every $i$, the interval $\gamma_i - ( \gamma_{i-1} \cup \gamma_{i+1}) $ is non-empty. Indeed, up to translation, this amounts to checking that the interval $\gamma - (\fix{\alpha_i}\cup \fix{\beta_i})$ is non-empty for some suitable non-trivial elements $\alpha_i\in \langle g \rangle$ and $\beta_i \in \langle h \rangle$, and this follows from the assumption that the stable fixed-point sets are disjoint. It now follows from Lemma~\ref{lem:concatenation_geodesics_distinct} that the $\gamma_i$ are pairwise distinct. In particular, $\gamma_0 = \gamma$ and $\gamma_k = w\cdot \gamma$ are distinct, hence $w$ acts non-trivially on $T$. See Figure~\ref{fig:elliptic disjoint fps} for an intuition.
\end{proof}

\begin{figure}
    \centering
    \includegraphics[width=0.5\linewidth]{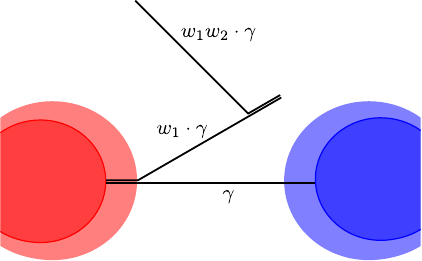}
    \caption{In red and blue the fixed-point sets of $g$ and $h$, respectively. Shaded, the stable fixed-point sets.}
    \label{fig:elliptic disjoint fps}
\end{figure}

\begin{lem}[mixed, bounded case]\label{lem:mixed-bounded}
    Let~$T$ be a real tree and let $G$ be a group acting on~$T$ by isometries. If $g\in G$ acts elliptically on~$T$, $h\in G$ acts loxodromically on~$T$ and $\operatorname{Fix}^{\infty}(g)\cap\Axis h$ is bounded (including the case where it is empty), then there is $n\in\N$ such that
    \[
    \Span{g,h^n}_{G}\cong  \Span{g}_G*\Span{h^n}_{G}.
    \]
\end{lem}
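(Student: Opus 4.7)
I would prove Lemma~\ref{lem:mixed-bounded} by a ping-pong argument modelled on the proof of Lemma~\ref{lem:elliptic-elliptic general case}. First, I would show that no non-trivial power of $g$ setwise preserves the axis $L := \Axis{h}$: if $g^a$ preserved $L$, being elliptic it could only act as an inversion, so $g^{2a}$ would fix $L$ pointwise, contradicting the boundedness of $\stabfix{g} \cap L$. Consequently, for every $a \ne 0$ with $g^a \ne e$, the axis $g^a L$ is distinct from $L$, meeting it in a (possibly empty) bounded sub-segment.

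Next, I would fix a point $q \in \stabfix{g}$ closest to $L$ (arranged, by passing to a suitable power of $g$ if needed, to be fixed by $g$ itself), set $p = \pi_L(q)$, $D = d(q, p)$, and choose $n$ large enough that $n \tau(h) > 2D + 2\diam(\stabfix{g} \cap L) + 1$. For a non-empty reduced word $w = w_1 \cdots w_k$ with $w_i$ alternating non-trivial elements of $\Span{g}$ and $\Span{h^n}$, form the translates $\gamma_i := w_1 \cdots w_i \cdot \gamma$ of the geodesic $\gamma := [q, h^n q]$. The plan is to show that consecutive translates concatenate in a non-backtracking fashion, so that by Lemma~\ref{lem:concatenation_geodesics_distinct} the $\gamma_i$'s are pairwise distinct, yielding $w \cdot \gamma \ne \gamma$ and hence $w \ne e$.

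The non-backtracking check splits into two observations. When $w_{i+1} = g^a$ is non-trivial, the shared endpoint of $\gamma_i$ and $\gamma_{i+1}$ is a translate of $q$, and the direction of $[q, p]$ at $q$ is moved by $g^a$ to a strictly different direction: if $g^a$ fixed this direction, $\fix{g^a}$ would extend from $q$ toward $L$, contradicting the choice of $q$ as the closest point of $\stabfix{g}$ to $L$. When $w_{i+1} = h^{nb}$ is non-trivial, the translation length $|b| n \tau(h)$ exceeds twice the extent of the bounded ``interaction region'' $[q, p] \cup (\stabfix{g} \cap L)$, so $\gamma_{i+1}$ overlaps $\gamma_i$ along a sub-segment of $L$ strictly smaller than either, with each contributing a new sub-segment to the concatenation.

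\textbf{Main obstacle.} The most delicate part is handling $w_{i+1} = h^{nb}$ for general $|b|$: unlike the elliptic--elliptic case where consecutive translates of $\gamma$ share an endpoint, here translates under powers of $h^n$ overlap along long sub-segments of $L$. One must track these overlaps uniformly in $|b|$ and verify that each $\gamma_i$ has a portion disjoint from both $\gamma_{i-1}$ and $\gamma_{i+1}$, in order to apply Lemma~\ref{lem:concatenation_geodesics_distinct}. A secondary subtlety is the boundary case $\stabfix{g} \cap L \ne \emptyset$: the ``antenna'' $[q, p]$ degenerates, and the direction argument at $q$ must instead use a direction along $L$ at a boundary point of the bounded intersection, with boundedness ensuring that such a point exists and that $g^a$ moves the corresponding direction non-trivially.
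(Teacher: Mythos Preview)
Your path-concatenation strategy does not transfer from the elliptic--elliptic case. The decisive failure is at a syllable $w_{i+1}=h^{nb}$ with $|b|\ge 2$: writing $L=\Axis h$ and $p$ for the projection of $q$ to $L$, one has $\gamma\cap L=[p,h^np]$ while $h^{nb}\gamma\cap L=[h^{nb}p,h^{n(b+1)}p]$, and for $|b|\ge 2$ these intervals---and hence $\gamma_i$ and $\gamma_{i+1}$ themselves---are disjoint. So contrary to what your ``main obstacle'' paragraph assumes, there is no overlap to track, and the concatenation hypothesis of Lemma~\ref{lem:concatenation_geodesics_distinct} is simply violated; enlarging $n$ only pushes the translates further apart. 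A secondary problem is the parenthetical ``passing to a suitable power of $g$'': the lemma concerns $g$ itself, and if the closest point of $\stabfix g$ to $L$ lies in $\fix{g^m}\setminus\fix g$ for some $m>1$ (which can certainly happen), your direction argument at $q$ breaks for exponents $a$ not divisible by $m$, since $g^a$ does not even fix $q$.

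The paper instead runs ping-pong on $\partial T$. Taking any $z\in\fix g$ and $p$ its closest-point projection to $L$, one shows that the projection of $g^kL$ to $L$ lies in a fixed bounded neighbourhood of $p$ for \emph{every} $k\ne 0$ (the key being that a nonempty intersection $g^kL\cap L$ forces $p\in\fix{g^k}\subset\stabfix g$). This yields open sets $U^\pm\subset\partial T$ around $h^\pm$ missed by all $g^k\{h^\pm\}$ simultaneously; north--south dynamics of $\langle h\rangle$ on $\partial T$ then provides $n$ with $h^{kn}\big(\partial T\setminus(U^+\cup U^-)\big)\subset U^+\cup U^-$ for all $k\ne 0$, and the standard ping-pong lemma concludes. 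If you prefer to stay inside $T$, the correct substitute for path concatenation is half-tree ping-pong: take $X^\pm$ to be the two unbounded components of $T$ minus a long closed sub-interval of $L$ containing the projection of $\stabfix g$.
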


\begin{proof}
Let $\pi:T\to\Axis h$ be the closest-point projection map and let $A=\pi(\stabfix g)$, which is either a single point or equal to $\stabfix g\cap\Axis h$ if the latter is nonempty.  Let $K=\abs{A}$, which is finite by assumption.  

Let $z\in \fix g$ and let $p=\pi(z)$ and let $D=d_T(z,p)$.  For any $k\neq 0$, consider $\pi(g^k\Axis h)$.  Either this is a single point, or it is equal to $g^k\Axis{h}\cap \Axis{h}=B$.  In the latter case, let $x\in B$.  Then $[z,x]=[g^kz,x]$ passes through $p$ since $x\in \Axis{h}$ and through $g^kp$ since $x\in g^k\Axis h$ and $g^kp$ is the projection of $g^kz=z$ onto $g^k\Axis h$.  Since $d_T(z,g^kp)=D$, we have $p=g^kp$, i.e. $p\in \fix{g^k}\cap \Axis{h}$.  Thus $g^{2k}x=x$ and hence $x\in \stabfix g$.  We have shown that $\pi(g^k\Axis h)$ has diameter at most $K$ for all $k\neq 0$, and moreover it is contained in the $K$--ball about the point $p$, which is independent of $k$.  

Now consider $\partial T$, equipped with the usual visual metric.  We have just shown that there are open sets $U^\pm\subset \partial T$, respectively containing $h^\pm$, such that $g^k\{h^\pm\}\cap (U^+\cup U^-)=\emptyset$ for $k\neq 0$.  

On the other hand, $\langle h\rangle$ acts on $\partial T$ with north-south dynamics, so there exists $n>0$ such that $h^{kn}(\partial T-(U^-\cup U^+))\subset U^-\cup U^+$ for all $k\neq 0$.  We can therefore apply the ping-pong lemma (see e.g. \cite[II.B.24]{delaHarpe}) to conclude.
\end{proof}

\begin{lem}[loxodromic-loxodromic, bounded case]\label{lem:lox-lox}
    Let~$T$ be a real tree and let $G$ be a group acting on~$T$ by isometries. Let~$g,h\in G$ be elements acting loxodromically on~$T$. If~$\Axis{g}\cap\Axis{h}$ is bounded (possibly empty), then there exists $n\in\N$ such that $\Span{g^n,h^n}_{G}\cong F_2$.
\end{lem}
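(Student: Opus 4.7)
The plan is to run a ping-pong argument on the Gromov boundary $\partial T$, exactly as in the final paragraph of Lemma~\ref{lem:mixed-bounded}. The crucial input is that the bounded-intersection hypothesis forces the four boundary points $g^{\pm\infty}, h^{\pm\infty}$ to be pairwise distinct, after which loxodromic north-south dynamics does all the work.

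First, I would establish the pairwise distinctness of $g^{\pm\infty}$ and $h^{\pm\infty}$. The two fixed points of $g$ on $\partial T$ are distinct from each other (since $g$ is loxodromic), and similarly for $h$, so it suffices to rule out $g^{\epsilon\infty}=h^{\delta\infty}$ for $\epsilon,\delta\in\{+,-\}$. In a real tree, two distinct bi-infinite geodesic lines sharing an endpoint at infinity must eventually coincide on a common ray heading to that endpoint (this is immediate from the uniqueness of geodesics in a real tree); in particular their intersection would be unbounded. Hence any such coincidence contradicts the hypothesis that $\Axis g\cap\Axis h$ is bounded, and the four endpoints are distinct.

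Next, pick pairwise disjoint open neighbourhoods $U_g^{\pm}\ni g^{\pm\infty}$ and $U_h^{\pm}\ni h^{\pm\infty}$ in $\partial T$ with respect to the usual visual metric. Because each of $\langle g\rangle$ and $\langle h\rangle$ acts on $\partial T$ with north-south dynamics, there exists $n\in\N_{\ge1}$ such that for every $k\neq 0$,
\[
g^{kn}\bigl(\partial T\setminus(U_g^-\cup U_g^+)\bigr)\subset U_g^-\cup U_g^+,
\qquad
h^{kn}\bigl(\partial T\setminus(U_h^-\cup U_h^+)\bigr)\subset U_h^-\cup U_h^+.
\]
The ping-pong lemma (e.g.\ \cite[II.B.24]{delaHarpe}) applied to the four sets $U_g^\pm, U_h^\pm$ and the isometries $g^n,h^n$ of $\partial T$ then yields $\Span{g^n,h^n}_G\cong F_2$.

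There is no real obstacle beyond the step that produces the distinctness of the four boundary points; once bounded axial intersection is turned into disjointness of endpoint pairs in $\partial T$, the argument is completely parallel to the end of the proof of Lemma~\ref{lem:mixed-bounded}.
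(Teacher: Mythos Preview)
Your argument is correct. The paper does not actually write out a proof here; it simply cites \cite[Lemma~2.6]{CullerMorgan} and moves on. Your boundary ping-pong is a perfectly good self-contained substitute, and it has the pleasant feature of being visibly parallel to the end of the proof of Lemma~\ref{lem:mixed-bounded}. The one step worth making explicit when you write it up is the observation that bounded $\Axis g\cap\Axis h$ forces $\Axis g\neq\Axis h$, which you are using implicitly when you argue that a shared boundary point would produce a shared ray; once that is said, everything goes through.
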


\begin{proof}
This is a direct consequence of, for instance, \cite[Lemma 2.6]{CullerMorgan}.
\end{proof}

\begin{figure}
    \centering
    \includegraphics[width=0.6\textwidth, height=0.15\textheight]{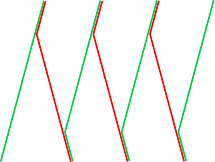}
    \caption{In green, the axis of $g^n$ and its translates; in red, the axis of $h^n$ and its translates.}
    \label{fig:loxo-loxo}
\end{figure}

\begin{lem}[loxodromic-loxodromic, unbounded case]\label{lem:lox-unbounded}
    Let~$T$ be a real tree and let $G$ act on~$T$ by isometries. If $g,h\in G$ both act loxodromically on~$T$ and $\Axis{g}\cap\Axis{h}$ is unbounded, then 
    $g$ and $h$ fix a common point in $\partial T$.
\end{lem}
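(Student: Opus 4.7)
\medskip

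\noindent\textbf{Proof plan.} The plan is to analyse the shape of $I := \Axis{g}\cap \Axis{h}$ as a convex subset of each axis. Since both axes are geodesic lines in the real tree~$T$, and geodesic lines in a real tree are convex, the intersection $I$ is convex in~$T$. In particular, viewed as a subset of $\Axis{g}$ (which is isometric to $\R$), $I$ is an interval, and likewise for $\Axis{h}$. By hypothesis, $I$ is unbounded, so it contains a geodesic ray $R \subset T$.

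Next, I would identify a common point at infinity. The ray $R$ is simultaneously a subray of $\Axis{g}$ and of $\Axis{h}$, so it determines a single point $\xi \in \partial T$. Viewed as a subray of $\Axis{g}$, the endpoint $\xi$ is one of the two ideal endpoints of $\Axis{g}$, namely the attracting or repelling fixed point of $g$ on $\partial T$; in particular, $g\cdot \xi = \xi$. The same reasoning applied to $\Axis{h}$ yields $h\cdot\xi = \xi$, giving the desired common fixed point.

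The main work is therefore just the convexity/interval step: confirming that in a real tree the axis of a loxodromic isometry is a convex subset, so that the intersection $I$ is a connected sub-interval of each axis, and that an unbounded such interval contains a ray whose ideal endpoint is one of the two endpoints of the ambient axis. Both facts are standard consequences of the definition of a real tree together with the usual north-south dynamics of a loxodromic isometry on its axis (e.g.\ as in \cite{CullerMorgan}*{1.3}), so I do not expect any genuine obstacle; the only point to be careful about is that the single ray $R$ yields a \emph{unique} boundary point, which is why we extract one common ray rather than arguing separately about endpoints of each axis.
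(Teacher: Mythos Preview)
Your argument is correct and is exactly the standard ``immediate'' verification the paper has in mind; the paper's own proof consists of the single sentence ``This is immediate.'' You have simply spelled out the details (convexity of axes, unbounded intersection contains a ray, ray determines a common ideal endpoint fixed by both isometries), and there is nothing to add.
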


 \begin{proof}
 This is immediate.
 \end{proof}

\begin{lem}[mixed, unbounded case]\label{lem:mixed-unbounded}
    Let~$T$ be a real tree and let $G$ act on~$T$ under the hypotheses of Theorem~\ref{thm:wpa for action on real tree}. If $g,h\in G$, with $g$ acting elliptically on~$T$ and $h$ acting loxodromically on~$T$, and with  $\stabfix{g}\cap\Axis{h}$  unbounded, then there exists $n\in\N_{\ge1}$ such that 
    $g^n$ and $h$ fix a common point in $\partial T$. 
\end{lem}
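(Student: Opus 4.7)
The plan is to use the stabilisation property to find a single power of $g$ that pointwise fixes an entire sub-ray of $\Axis{h}$; its endpoint at infinity will then be a common fixed point of that power and of $h$. To start, both $\stabfix{g}$ and $\Axis{h}$ are convex in $T$, so $\stabfix{g}\cap\Axis{h}$ is a convex, hence interval-shaped, subset of the line $\Axis{h}$. As it is unbounded by hypothesis, it contains a geodesic sub-ray $\rho\subset\Axis{h}$ with endpoint $\xi\in\partial T$. Since $\rho$ is a sub-ray of $\Axis{h}$, the point $\xi$ is one of the two endpoints of $\Axis{h}$ in $\partial T$, and is therefore fixed by $h$ (which acts on $\Axis{h}$ by non-trivial translation).

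Next, I would exhaust $\rho$ by a nested sequence of bounded geodesic subsegments $\gamma_1\subsetneq\gamma_2\subsetneq\cdots$ whose lengths tend to infinity, and denote by $H_n$ the pointwise stabiliser of $\gamma_n$ in $G$. The sequence $\{H_n\}_{n\ge 1}$ is decreasing, and by the stabilisation property (Definition~\ref{def:goodname property}) there exists $N\ge 1$ such that $H_n=H_N=:H$ for all $n\ge N$. Consequently, $H$ pointwise fixes every $\gamma_n$ with $n\ge N$, and hence pointwise fixes the whole ray $\rho=\bigcup_{n\ge N}\gamma_n$; in particular, every element of $H$ fixes $\xi$.

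It then remains to exhibit a non-trivial power of $g$ inside $H$. Let $x,y$ be the endpoints of $\gamma_N$: both lie in $\stabfix{g}$, so there are $a,b\in\Z\setminus\{0\}$ with $g^a\cdot x=x$ and $g^b\cdot y=y$, and then $g^{ab}$ fixes both $x$ and $y$. Since fixed-point sets of isometries of real trees are convex, $g^{ab}$ pointwise fixes the segment $[x,y]=\gamma_N$, so $g^{ab}\in H$. Setting $n=ab$, we obtain $g^n\cdot\xi=\xi=h\cdot\xi$, which concludes the argument. The main leverage is the stabilisation property: without it, the pointwise description of $\stabfix{g}$ as a union of fixed-point sets of \emph{different} powers would not let us pass to a single power of $g$ fixing an unbounded sub-ray of $\Axis{h}$, and that passage is the heart of the proof.
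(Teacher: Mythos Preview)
Your proof is correct and follows essentially the same strategy as the paper: apply the stabilisation property to an increasing sequence of segments inside $\stabfix{g}\cap\Axis{h}$, conclude that the stable pointwise stabiliser fixes a whole sub-ray of $\Axis{h}$, and then exhibit a nontrivial power of $g$ in that stabiliser. The only cosmetic difference is the choice of sequence: the paper uses $\gamma_k=\bigl(\bigcup_{i=1}^k\fix{g^i}\bigr)\cap\Axis{h}$, which has the built-in feature that $g^{k!}\in H_k$, whereas you take an arbitrary exhaustion of a sub-ray and then locate the required power of $g$ in $H_N$ by examining the endpoints of $\gamma_N$.
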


\begin{proof}
The stabilisation property applied to the sequence of geodesics $\{(\bigcup^k_{i=1}\fix {g^i}) \cap \Axis{h}\}_n$ 
implies that there exists an integer $k$ such that $$(\bigcup^k_{i=1}\fix {g^i}) \cap \Axis{h} = \stabfix g \cap  \Axis{h}$$ Since $\bigcup^k_{i=1}\fix {g^i} \subset \fix {g^{k!}}$, we get in particular that $\fix {g^{k!}} \cap \Axis{h} = \stabfix g \cap  \Axis{h}$. Since this intersection is unbounded by assumption, it follows that $g^{k!}$ and $h$ fixes a point in $\partial T$. Thus, $g^{k!}$ and $h$ belong to the stabiliser of a common boundary point.
\end{proof}

Combining the lemmas gives:

\begin{prop}\label{prop:free-or-common-stabiliser}
Let $G$ act by isometries on the real tree $T$ and suppose that the action satisfies stabilisation. For all $g,h\in G$, there exists $n\in\N_{\ge1}$ such that either $\langle g^n,h^n\rangle_G\cong F_2$, or there exists $x\in T\cup\partial T$ such that $g^n,h^n\in\stab{G}{x}$. 
\end{prop}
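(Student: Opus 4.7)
The proposition follows from a case analysis on the action types of $g$ and $h$ on $T$: each is either elliptic or loxodromic, and in each configuration I would distinguish a ``bounded'' sub-case yielding $F_2$ and an ``unbounded'' sub-case yielding a common fixed point in $T\cup\partial T$. A preliminary step handles torsion: if either of $g,h$ has finite order, taking $n$ to be a common multiple of their orders (or just the order of the torsion element if the other has infinite order) makes one power trivial, whence one may pick $x$ to be any point fixed by the remaining power, which exists in $T\cup\partial T$ since every isometry of a real tree is elliptic or loxodromic. For the remainder I assume both $g$ and $h$ have infinite order, so that the cyclic subgroups they generate are infinite cyclic and that free products of them are genuinely free.

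In the elliptic--elliptic case, if $\stabfix g\cap\stabfix h\neq\varnothing$, pick $x$ in the intersection; by definition of the stable fixed-point set there exist $k,\ell\neq 0$ with $g^k x = h^\ell x = x$, and $n=|k\ell|$ places a common fixed point in $T$. If the stable fixed-point sets are disjoint, Lemma~\ref{lem:elliptic-elliptic general case} gives $\Span{g,h}_G\cong\Span{g}_G*\Span{h}_G\cong F_2$, and $n=1$ works. In the mixed case (say $g$ elliptic, $h$ loxodromic), if $\stabfix g\cap\Axis h$ is bounded, Lemma~\ref{lem:mixed-bounded} provides $n$ with $\Span{g,h^n}_G\cong\Span{g}_G*\Span{h^n}_G\cong F_2$; since $g^n$ and $h^n$ sit in distinct free factors of this decomposition they do not commute, so as a two-generator subgroup of $F_2$ they generate a rank-two free subgroup. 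If $\stabfix g\cap\Axis h$ is unbounded, Lemma~\ref{lem:mixed-unbounded} supplies $n$ and a common boundary fixed point. Finally, in the loxodromic--loxodromic case, boundedness of $\Axis g\cap\Axis h$ lets Lemma~\ref{lem:lox-lox} produce $F_2$, while unboundedness triggers Lemma~\ref{lem:lox-unbounded} to furnish a common point at infinity.

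The main care to exercise lies in the torsion reduction and in the upgrade from $\Span{g,h^n}_G\cong F_2$ to $\Span{g^n,h^n}_G\cong F_2$ in the mixed bounded case; both are routine once one keeps track of orders. The stabilisation hypothesis itself enters only through Lemma~\ref{lem:mixed-unbounded}, so the proof's substance is already carried by the preceding lemmas, and this proposition is essentially their packaging into a single trichotomy.
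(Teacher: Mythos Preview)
Your proposal is correct and follows the same approach as the paper: a case analysis on the elliptic/loxodromic types of $g$ and $h$, invoking exactly Lemmas~\ref{lem:elliptic-elliptic general case}, \ref{lem:mixed-bounded}, \ref{lem:lox-lox}, \ref{lem:lox-unbounded}, and \ref{lem:mixed-unbounded}. You are in fact more careful than the paper on a couple of points it glosses over: the paper's proof simply cites Lemma~\ref{lem:elliptic-elliptic general case} for the elliptic--elliptic case without separately treating the subcase $\stabfix g\cap\stabfix h\neq\varnothing$, and it does not spell out the torsion considerations or the passage from $\langle g,h^n\rangle\cong F_2$ to $\langle g^n,h^n\rangle\cong F_2$ in the mixed bounded case, both of which you handle correctly.
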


\begin{proof}
Each of $g$ and $h$ is either elliptic or loxodromic \cite[1.3]{CullerMorgan}. By Lemmas~\ref{lem:mixed-unbounded}, \ref{lem:lox-unbounded}, \ref{lem:lox-lox}, and \ref{lem:mixed-bounded}, if $h$ is loxodromic, then either $\langle g^n,h^n\rangle_G\cong F_2$ for suitable $n$, or $g^n$ and $h^n$ stabilise a common point in $\partial T$.  Otherwise we can conclude using Lemma~\ref{lem:elliptic-elliptic general case}.
\end{proof}

Theorem~\ref{thm:wpa for action on real tree} is now a direct consequence of Proposition~\ref{prop:free-or-common-stabiliser}.

\section{Examples and first application}
\label{sec:wpa for Artin groups expressed as a visual splitting}

We collect here some first direct applications of Theorem~\ref{thm:wpa for action on real tree}, together with examples and non-examples of groups that satisfy the power alternative. In the last part of the section we will turn our focus on Artin groups that admit a visual splitting, as a more detailed example. 

\subsection{First applications}\label{subsec:first applications}

To illustrate a typical case of Theorem \ref{thm:wpa for action on real tree}, we have the following statement about acylindrical actions on trees (we refer the reader to Definition \ref{defn:acylindricity} for the definition of acylindricity).

\begin{cor}\label{cor:acyl}
    Let $T$ be a real tree and let $G$ act on $T$ acylindrically. If stabilisers of points of $T$ satisfy the power alternative, then so does $G$. 
\end{cor}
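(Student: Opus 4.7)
The plan is to deduce the corollary directly from Theorem~\ref{thm:wpa for action on real tree}. Its first hypothesis (stabilisation) is immediate from Example~\ref{ex:acylindrical}: acylindricity provides constants $L,N$ such that every geodesic segment of length at least $L$ has pointwise stabiliser of size at most $N$, so any monotone family of geodesics of unbounded length has eventually constant (finite) pointwise stabiliser. Stabilisers of points in $T$ satisfy the power alternative by assumption, so it remains to handle boundary stabilisers $H=\stab{G}{\xi}$ for $\xi\in\partial T$.

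The plan for those is to show that $H$ is virtually nilpotent of class at most two with finite commutator subgroup, which implies the power alternative. Consider the signed translation-length map $\tau_\xi\colon H\to\R$, sending an elliptic element to $0$ and a loxodromic element $g$ to $\pm\tau(g)$ (the sign determined by whether $g$ attracts or repels $\xi$); this is well defined because any loxodromic element of $H$ must have $\xi$ as an endpoint of its axis. The map $\tau_\xi$ is a homomorphism, and its kernel $K$ consists of elliptic elements fixing a ray to $\xi$. Any finite subset of $K$ jointly fixes a sub-ray towards $\xi$, hence pointwise fixes a segment of length at least $L$, so by acylindricity the subgroup it generates has order at most $N$; therefore $K$ itself is finite of order at most $N$, and $H/K$ embeds in $\R$ as a torsion-free abelian group.

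The centraliser $H_0=C_H(K)$ has finite index in $H$ (since $\mathrm{Aut}(K)$ is finite), contains $K$ centrally, and satisfies $H_0/K$ torsion-free abelian, so $H_0$ is nilpotent of class at most two. By bilinearity of the commutator in such groups, $[g^n,h^n]=[g,h]^{n^{2}}$ for all $g,h\in H_0$; since $[g,h]\in K$ has order dividing $|K|\le N$, taking $n=N!$ forces $[g^n,h^n]=1$. Hence $H_0$ satisfies the power alternative uniformly, and Lemma~\ref{lem:wpa virtual property} transfers this to $H$. Applying Theorem~\ref{thm:wpa for action on real tree} completes the proof.

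The only real content is the structural analysis of boundary stabilisers; the rest is bookkeeping. (An alternative path is to invoke Proposition~\ref{prop:free-or-common-stabiliser} directly and perform the same analysis in the case where the common fixed point lies in $\partial T$.)
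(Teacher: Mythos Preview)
Your proof is essentially correct and follows the same global template as the paper (verify stabilisation via Example~\ref{ex:acylindrical}, then handle boundary stabilisers, then invoke Theorem~\ref{thm:wpa for action on real tree}), but the treatment of boundary stabilisers is genuinely different. The paper simply quotes \cite[Theorem~1.1]{Osin:acyl}: for an acylindrical action on a hyperbolic space, the stabiliser of a boundary point either stabilises a point of $T$ or is virtually~$\integers$; in either case the power alternative is immediate. You instead give a self-contained argument: the signed translation-length homomorphism $\tau_\xi$ has finite kernel $K$ (by acylindricity, since any finite subset of $K$ fixes a long segment), so $H$ is finite-by-(torsion-free abelian), hence virtually nilpotent of class at most~$2$ with finite commutator subgroup, and the commutator identity finishes. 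Your route avoids the external reference and is essentially a direct proof, specialised to trees, of the piece of Osin's theorem that is needed here.

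One small imprecision: you assert that $H_0=C_H(K)$ ``contains $K$ centrally'' and speak of $H_0/K$, but $K\subseteq H_0$ only if $K$ is abelian, which need not hold. The fix is immediate: set $K_0=K\cap H_0=Z(K)$; then $K_0$ is central in $H_0$ (since $H_0$ centralises $K$), and $H_0/K_0$ embeds in $H/K$, hence is torsion-free abelian. So $[H_0,H_0]\subseteq K_0\subseteq Z(H_0)$ and your bilinearity argument goes through verbatim with $K_0$ in place of~$K$. With this adjustment the proof is complete.
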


\begin{proof}
By \cite[Theorem 1.1]{Osin:acyl}, stabilisers of points in $\partial T$ either stabilise points of $T$ or are virtually $\integers$, and in particular satisfy the power alternative. Since the action satisfies the stabilisation property by Example~\ref{ex:acylindrical}, Theorem \ref{thm:wpa for action on real tree} applies.
\end{proof}

In fact, the lemmas used to prove Theorem \ref{thm:wpa for action on real tree} actually give a more general statement without further work; the statement requires a preliminary definition.

Recall what it means for a group $G$ to \emph{satisfy a law}.  Let $F(a_1,\ldots,a_r)$ be a free group on the generators $a_1,\ldots,a_r$ and let $w\in F(a_1,\ldots,a_r)$ be a nontrivial reduced word.  Define a map $w:G^r\to G$ as follows: given $g_1,\ldots,g_r\in G$, let $w(g_1,\ldots,g_r)$ be the image of $w$ under the homomorphism $F_r\to G$ induced by $a_i\mapsto g_i$ for $i\in\{1,\ldots,r\}$.  Then $G$ \emph{satisfies the law $w$} if $w(g_1,\ldots,g_r)=1$ for all $g_1,\ldots,g_r\in G$.  

\begin{defi}[law-power alternative]\label{def:law}
Given $w\in F_r-\{1\}$, the group $G$ satisfies the \emph{$w$--law-power alternative} if, for all $g,h\in G$, there exists $n\in\N_{\geq 1}$ such that either $\langle g^n,h^n\rangle_G\cong F_2$ or $\langle g^n,h^n\rangle_G$ satisfies the law $w$.
\end{defi}

\begin{rem}\label{rem:law-remarks}
The power alternative is the  $w$--law-power alternative for $w=[a_1,a_2]\in F(a_1,a_2)$.  One can define uniform versions of the $w$--law-power alternative, in analogy to Definition \ref{def:wpa}.  
\end{rem}

\begin{cor}\label{cor:law-alternative}
Fix $w\in F_r-\{1\}$. Suppose $G$ acts on the real tree $T$ satisfying stabilisation and that $\stab{G}{x}$ satisfies the $w$--law-power alternative for all $x\in T\cup\partial T$.  Then $G$ satisfies the $w$--law-power alternative.
\end{cor}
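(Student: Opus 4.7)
My plan is to observe that Corollary~\ref{cor:law-alternative} follows essentially formally from Proposition~\ref{prop:free-or-common-stabiliser}, because the proof of that proposition and of the lemmas it rests on (Lemmas~\ref{lem:elliptic-elliptic general case}, \ref{lem:mixed-bounded}, \ref{lem:lox-lox}, \ref{lem:lox-unbounded}, \ref{lem:mixed-unbounded}) is purely dynamical: no algebraic hypothesis on point stabilisers ever enters. Consequently, for any $g,h \in G$, Proposition~\ref{prop:free-or-common-stabiliser} produces $n \in \N_{\ge 1}$ such that either $\Span{g^n,h^n}_G \cong F_2$, or there is $x \in T \cup \partial T$ with $g^n, h^n \in \stab{G}{x}$.

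The proof I would write splits along this dichotomy. In the ``common stabiliser'' case, I would apply the $w$-law-power alternative for $\stab{G}{x}$ to the pair $(g^n, h^n) \in \stab{G}{x}\times \stab{G}{x}$: this furnishes $m \in \N_{\ge 1}$ such that $\Span{g^{nm}, h^{nm}}_G$, which coincides with the subgroup generated inside $\stab{G}{x}$, is either isomorphic to $F_2$ or satisfies the law $w$, and either way $nm$ is a valid exponent for the pair $(g,h)$ in the sense of Definition~\ref{def:law}. In the ``free'' case, I would simply pass from $n$ to $nm$, noting that two nontrivial powers of distinct free generators of $F_2$ still generate a subgroup of rank two, so the conclusion $\Span{g^{nm}, h^{nm}}_G \cong F_2$ persists. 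Combining the two cases gives a single exponent $nm$ that works for the pair $(g,h)$.

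There is no substantial obstacle here; the statement is a corollary in the literal sense. The only bookkeeping point worth flagging is to confirm, when revisiting the material of Section~\ref{sec:tree-action-criterion}, that the assumption on point stabilisers was invoked only in the final step passing from Proposition~\ref{prop:free-or-common-stabiliser} to Theorem~\ref{thm:wpa for action on real tree}, so that replacing the power alternative by the more flexible $w$-law-power alternative leaves every earlier step intact. Once this is noted, the argument above goes through verbatim for any fixed nontrivial $w \in F_r$, and the analogous uniform version alluded to in Remark~\ref{rem:law-remarks} would follow by tracking the exponents $n$ and $m$ quantitatively.
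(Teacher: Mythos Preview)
Your proposal is correct and follows essentially the same approach as the paper's proof, which simply says the result follows from Proposition~\ref{prop:free-or-common-stabiliser}, Definition~\ref{def:law}, and the hypothesis on stabilisers. Your write-up is more detailed than necessary---in particular, there is no need to unify the two cases under a single exponent $nm$, since for each fixed pair $(g,h)$ you land in exactly one branch of the dichotomy---but the argument is sound.
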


\begin{proof}
This follows from Proposition \ref{prop:free-or-common-stabiliser} and Definition \ref{def:law} along with our assumption on stabilisers.
\end{proof}

Before our remaining applications, we need to recall the structure of stabilisers of boundary points.

\subsection{Ultimate translation length}\label{subsec:UTL}
Let us introduce the concept of ultimate translation length for an isometry of the real tree $T$ fixing a point at infinity. 

\begin{defi}[ultimate translation length]
    Let~$T$ be a real tree and let~$G$ be a group acting on~$T$ by isometries; the \emph{ultimate translation length} of~$g\in\stab{G}{\xi}$ towards~$\xi$ is defined as
    \[
    \utl{g}=
    \begin{cases}
        \tau(g)         &\text{if~$g$ acts loxodromically on~$T$ and~$\xi=g^{+\infty}$}\\
        0               &\text{if~$g$ acts elliptically on~$T$}\\
        -\tau(g)        &\text{if~$g$ acts loxodromically on~$T$ and~$\xi=g^{-\infty}$}
    \end{cases}
    \]
\end{defi}

\begin{lem}[{\cite[Corollary 2.3]{CullerMorgan}}]
    \label{lem:ses for utl}
    Let~$T$ be a real tree, let $G$ be a group acting on~$T$ by isometries and let $\xi\in\partial T$. The function
    \begin{align*}
        \operatorname{utl}_\xi\colon\stab{G}{\xi}&\to\R\\
        g&\mapsto\utl{g}
    \end{align*}
    defines a group homomorphism.  If $T$ is simplicial, then the image of $\operatorname{utl}_\xi$ is in $\Z$.
\end{lem}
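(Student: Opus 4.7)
The plan is to identify $\operatorname{utl}_\xi(g)$ with the Busemann cocycle at $\xi$, from which the homomorphism property is a formal consequence; the integrality statement in the simplicial case then follows by choosing an appropriate representative ray.

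First I would fix a geodesic ray $\rho\colon[0,\infty)\to T$ representing $\xi$ and define the Busemann function $\beta_\xi(x)=\lim_{t\to\infty}(d(x,\rho(t))-t)$. In a real tree this limit exists: once $t$ is large enough that $\rho(t)$ lies on the unique geodesic ray from $x$ to $\xi$, the quantity $d(x,\rho(t))-t$ becomes constant (and changing the base ray alters $\beta_\xi$ only by an additive constant). For $g\in\stab{G}{\xi}$, the rays $\rho$ and $g\circ\rho$ both represent $\xi$, so they share an eventual subray; a short computation on this shared subray shows that $c(g):=\beta_\xi(x)-\beta_\xi(gx)$ is independent of $x\in T$. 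The cocycle identity $c(gh)=c(g)+c(h)$ is then immediate, so $c\colon\stab{G}{\xi}\to\R$ is a group homomorphism.

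Next I would verify $c=\operatorname{utl}_\xi$ case by case. If $g$ is loxodromic with $\xi=g^{+\infty}$, then $\Axis{g}$ shares an eventual subray with $\rho$, along which $g$ translates by $\tau(g)$ toward $\xi$, so $c(g)=\tau(g)$; the case $\xi=g^{-\infty}$ is symmetric. If $g$ is elliptic it fixes a point $p\in T$; since $g$ also fixes $\xi$ and permutes the rays based at $p$, the unique ray $[p,\xi)$ is setwise $g$-invariant, and because $g$ is an isometry fixing the basepoint it fixes this ray pointwise, giving $c(g)=0$.

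For the simplicial statement, I would choose $\rho$ to pass through a vertex at every integer time; then $\beta_\xi$ takes integer values on $\Ver{T}$, so $c(g)\in\Z$ for every graph automorphism $g\in\stab{G}{\xi}$. The main thing needing care is establishing convergence and basepoint-independence of $\beta_\xi$ on a real tree; both rely on the unique-geodesic property and on the fact that any two rays to the same boundary point share an eventual subray. The sign matching in the loxodromic case is the second place one has to be careful, but once the shared-subray identity $g\rho(s+r)=\rho(t+r)$ is set up, both $c(g)=s-t$ and the identification with $\pm\tau(g)$ are direct.
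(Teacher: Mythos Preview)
Your argument is correct and complete. The paper does not give its own proof of this lemma; it simply records the statement with a citation to \cite[Corollary~2.3]{CullerMorgan}. Your Busemann-function approach is a standard and clean way to establish the result, and arguably more conceptual than the direct translation-length computations in Culler--Morgan: once one knows that $c(g)=\beta_\xi(x)-\beta_\xi(gx)$ is basepoint-independent, the homomorphism property is automatic, whereas the original reference verifies additivity by analysing how the characteristic sets of $g$, $h$, and $gh$ interact. One small slip: with the shared-subray identity written as $g\rho(s+r)=\rho(t+r)$, the computation gives $c(g)=t-s$, not $s-t$ (check the loxodromic case $g\rho(r)=\rho(r+\tau(g))$, where $s=0$, $t=\tau(g)$). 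This does not affect the argument, since you correctly identified the values of $c$ in each dynamical case independently.
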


Note that an element of $\stab{G}{\xi}$ has zero ultimate translation length towards the point at infinity~$\xi$ if and only if it fixes pointwise a geodesic ray converging to~$\xi$. Therefore 
\[
\Ker{\operatorname{utl}_\xi}=\cup\big\{\pstab{G}{\gamma}:\text{$\gamma$ geodesic ray in~$T$ converging to~$\xi$}\big\}.
\]

\begin{lem}
    \label{lem:ker of utl is a union}
    Let~$T$ be a real tree and let $G$ be a group acting on~$T$ by isometries. Let $\{I_n\}_{n\in\N}$ be a sequence of geodesic rays converging to~$\xi$ such that:
    \begin{enumerate}
        \item for every~$n\in\N$, $I_{n+1}\subsetneq I_n$;
        \item $\bigcap_{n\in\N}I_n=\varnothing$;
    \end{enumerate}
    Then $\Ker{\operatorname{utl}_\xi}=\bigcup_{n\in\N}\pstab{G}{I_n}$.
\end{lem}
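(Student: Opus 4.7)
The plan is to show the two inclusions separately, where the harder direction $\Ker{\operatorname{utl}_\xi}\subseteq\bigcup_{n\in\N}\pstab{G}{I_n}$ is reduced to a matching-rays argument in the real tree.

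The inclusion $\bigcup_{n}\pstab{G}{I_n}\subseteq\Ker{\operatorname{utl}_\xi}$ is immediate from the characterisation of the kernel recorded just before the statement: if $g$ pointwise fixes the ray $I_n$, which converges to $\xi$, then $g\in\stab{G}{\xi}$ and $\operatorname{utl}_\xi(g)=0$.

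For the reverse inclusion, fix $g\in\Ker{\operatorname{utl}_\xi}$. By the same characterisation, there is a geodesic ray $\gamma$ converging to $\xi$ such that $g\in\pstab{G}{\gamma}$. In a real tree, any two geodesic rays converging to the same point of the Gromov boundary share a common infinite sub-ray; applied to $\gamma$ and $I_1$, this yields a point $p$ such that $\gamma\cap I_1$ is the sub-ray of $I_1$ starting at $p$, i.e.\ $\gamma\cap I_1=[p,\xi)\subseteq\gamma$. The strategy is then to show that, from some index $N$ onwards, the ray $I_n$ is a sub-ray of $[p,\xi)$, hence contained in $\gamma$, so that $g$ fixes $I_n$ pointwise and $g\in\pstab{G}{I_n}$.

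To control how $I_n$ sits inside $I_1$, write $I_n=[x_n,\xi)$ for each $n\geq 1$. From $I_{n+1}\subsetneq I_n$ one deduces that $x_{n+1}\in I_n$ and $x_{n+1}\neq x_n$, so $x_{n+1}$ lies strictly between $x_n$ and $\xi$ on $I_n\subseteq I_1$. Hence the sequence $(x_n)$ moves monotonically along $I_1$ toward $\xi$. If it were bounded away from $\xi$ on $I_1$, it would have a limit point $y\in I_1$, and by monotonicity the sub-ray $[y,\xi)$ of $I_1$ would be contained in every $I_n$, contradicting $\bigcap_n I_n=\varnothing$. Therefore $x_n\to\xi$ along $I_1$, so there exists $N$ with $x_n\in[p,\xi)$ for all $n\geq N$; for such $n$ we have $I_n=[x_n,\xi)\subseteq[p,\xi)=\gamma\cap I_1\subseteq\gamma$, and $g\in\pstab{G}{I_n}$, as required.

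The main (and only non-trivial) point is the monotonicity step combined with the emptiness of the intersection: it converts the purely algebraic condition $\operatorname{utl}_\xi(g)=0$ into the strong geometric statement that $g$ pointwise fixes an arbitrarily short tail of the prescribed family. Everything else is standard real-tree geometry.
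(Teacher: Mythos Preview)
Your proof is correct and follows essentially the same approach as the paper: both establish the easy inclusion directly from the characterisation of $\Ker{\operatorname{utl}_\xi}$, and for the reverse inclusion both pick a ray $\gamma$ pointwise fixed by $g$ and argue that some $I_n$ is contained in $\gamma$. The paper states this last step in one sentence (``Because the sequence of the~$I_n$'s is inclusion-wise decreasing and satisfies $\bigcap_{n\in\N}I_n=\varnothing$, there exists~$n\in\N$ such that $I_n\subset \gamma$''), whereas you spell out the monotonicity-plus-empty-intersection argument explicitly; the underlying idea is the same.
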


\begin{proof}
    Because~$\{I_n\}_{n\in\N}$ is a family of geodesic rays converging to~$\xi$, the inclusion
    \[
    \bigcup_{n\in\N}\pstab{G}{I_n}\subset\Ker{\operatorname{utl}_\xi}
    \]
    is straightforward. For the reverse inclusion, let~$g\in\Ker{\operatorname{utl}_\xi}$, so that there exists a geodesic ray~$\gamma\subset T$ converging to~$\xi$, which~$g$ fixes pointwise. Because the sequence of the~$I_n$'s is inclusion-wise decreasing and satisfies $\bigcap_{n\in\N}I_n=\varnothing$, there exists~$n\in\N$ such that $I_n\subset \gamma$.
\end{proof}

\subsection{Combination theorem, examples, and non-examples}\label{subsec:technical-combination-theorem}
The next statement will be used in conjunction with results in Section \ref{sec:recovering} about mapping tori.

\begin{cor}\label{cor:law-with-mapping-tori}
Let $w\in F_r-\{1\}$.  Suppose $G$ acts by graph automorphisms, without inversions, on the simplicial tree $T$ and the action satisfies stabilisation.  Let $\mathfrak C$ be the class of exactly those $C\leq G$ such that $C\in \mathfrak C$ is the intersection of finitely many vertex stabilisers.  For every $z\in G$ and $C\in\mathfrak C$ such that $C\trianglelefteq \langle C,z\rangle_G=:H$, suppose that $H$ satisfies the $w$--law-power alternative.  Then $G$ satisfies the $w$--law-power alternative.
\end{cor}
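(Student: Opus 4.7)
The plan is to apply Proposition \ref{prop:free-or-common-stabiliser} to reduce to a common fixed point in $T\cup\partial T$, and then extract from this the data $(C,z)$ required by the hypothesis; the boundary case is handled via the homomorphism $\operatorname{utl}_\xi$ together with the stabilisation property applied to decreasing sequences of infinite rays.

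Given $g,h\in G$, Proposition \ref{prop:free-or-common-stabiliser} yields $n\in\N_{\ge1}$ such that either $\Span{g^n,h^n}_G\cong F_2$ (in which case the $w$-law-power alternative holds trivially), or $g^n,h^n\in\stab{G}{x}$ for some $x\in T\cup\partial T$. When $x\in T$, the no-inversions hypothesis ensures that $g^n,h^n$ fix either a common vertex of $T$ or both endpoints of a common edge, so $g^n,h^n\in C$ for some $C\in\mathfrak{C}$; taking $z=1$ reduces to the hypothesis with $H=C$.

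Now suppose $x=\xi\in\partial T$, and set $u=\operatorname{utl}_\xi(g^n)$ and $v=\operatorname{utl}_\xi(h^n)$. If $u=v=0$, Lemma \ref{lem:ker of utl is a union} gives a common sub-ray $I$ of a reference ray to $\xi$ fixed pointwise by both $g^n$ and $h^n$; the stabilisation property applied to finite segments exhausting $I$ shows that $C:=\pstab{G}{I}$ is a finite intersection of vertex stabilisers, so $C\in\mathfrak{C}$, and we take $z=1$. Otherwise we may assume $u\neq0$: set $d=\gcd(u,v)$ and pick integers $a,b$ with $au+bv=d$, so that $z:=(g^n)^a(h^n)^b$ has $\operatorname{utl}_\xi(z)=d>0$, and $z$ is loxodromic with axis $A$ having one endpoint at $\xi$. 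Parameterise the vertices of $A$ between a chosen $v_0\in A$ and $\xi$ as $v_0,v_1,\ldots$ so that $zv_k=v_{k+d}$, and let $I_k:=[v_k,\xi)$. The elements $k_g:=g^nz^{-u/d}$ and $k_h:=h^nz^{-v/d}$ lie in $\Ker{\operatorname{utl}_\xi}$, so Lemma \ref{lem:ker of utl is a union} gives some $N\ge0$ with $k_g,k_h\in\pstab{G}{I_N}$.

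The crucial step is to apply the stabilisation property to the decreasing sequence of infinite rays $\{I_{N+k}\}_{k\ge0}$: since each ray has length $+\infty$, the condition $\ell(I_{N+k})\to+\infty$ is satisfied, and hence there exists $K\ge0$ with $\pstab{G}{I_{N+K+j}}=\pstab{G}{I_{N+K}}$ for all $j\ge0$. Set $C:=\pstab{G}{I_{N+K}}$; reapplying stabilisation to finite segments exhausting $I_{N+K}$ shows $C\in\mathfrak{C}$, and
\[
zCz^{-1}=\pstab{G}{zI_{N+K}}=\pstab{G}{I_{N+K+d}}=C,
\]
so $z$ normalises $C$. Since $k_g,k_h\in\pstab{G}{I_N}\subseteq C$, the elements $g^n=k_gz^{u/d}$ and $h^n=k_hz^{v/d}$ lie in $H:=\Span{C,z}_G$ with $C\trianglelefteq H$, and the hypothesis applied to $g^n,h^n\in H$ yields the required integer. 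The main obstacle and key insight is precisely this use of the stabilisation property on decreasing sequences of infinite rays: the naïve choice $\pstab{G}{I_N}$ fails to be normalised by $z$, but passing to a sufficiently deep sub-ray produces a pointwise stabiliser that is both in $\mathfrak{C}$ and $z$-invariant, furnishing the data required by the hypothesis.
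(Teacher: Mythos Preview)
Your proof is correct and follows essentially the same route as the paper: apply Proposition~\ref{prop:free-or-common-stabiliser}, handle $x\in T$ directly, and for $x=\xi\in\partial T$ use $\operatorname{utl}_\xi$ together with stabilisation (twice: once on decreasing rays, once on increasing segments) to identify $C\in\mathfrak C$. The paper is terser---it sets $C=\Ker{\operatorname{utl}_\xi}$ outright and notes this is automatically normal in $\stab{G}{\xi}$, taking $z$ to be any preimage of a generator of the image---whereas you construct a specific $z$ as a word in $g^n,h^n$ and verify normality by hand; your $C=\pstab{G}{I_{N+K}}$ in fact equals $\Ker{\operatorname{utl}_\xi}$, so the two arguments coincide up to presentation.
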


\begin{proof}
Let $g,h\in G$.  By Proposition \ref{prop:free-or-common-stabiliser}, either $\langle g^n,h^n\rangle_G\cong F_2$ for some $n\in\N_{\ge1}$, or $n$ can be chosen so that $g^n,h^n\in\stab{G}{x}$ for some $x\in T\cup\partial T$.  If $x\in T$, then our hypothesis on stabilisers applies.  Therefore suppose that $x\in \partial T$.  Consider the homomorphism $\operatorname{utl}_x\colon\stab{G}{x}\to\Z$ from Lemma \ref{lem:ses for utl}.  If $\utl{g^n}=\utl{h^n}=0$, then $g^n,h^n$ stabilise a common point in $T$, and we are done.  So we may assume that $\operatorname{utl}_x$ has nontrivial image in $\Z$.  By Lemma \ref{lem:ker of utl is a union} and \goodname, $C=\Ker{\operatorname{utl}_x}\in\mathfrak C$, so we have a short exact sequence $1 \rightarrow C\hookrightarrow \stab{G}{x}\twoheadrightarrow \integers \rightarrow 1$.  Hence $\stab{G}{x}$ satisfies the $w$--law-power alternative, as required.
\end{proof}

As a sample application, we have:

\begin{cor}\label{cor:graph-of-free-groups}
Let $G$ split as a finite graph of groups where:
\begin{itemize}
    \item Each vertex group $G_v$ is of one of the following types: $G_v$ is free; $G_v$ is the fundamental group of a closed hyperbolic surface; $G_v$ is a one-ended hyperbolic group with $\operatorname{Out}(G_v)$ finite.
    \item Each edge group $G_e$ is free or a surface group, and quasiconvex in the incident vertex groups.
    \item The action of $G$ on the Bass-Serre tree satisfies \goodname.
\end{itemize}
Then $G$ satisfies the power alternative.
\end{cor}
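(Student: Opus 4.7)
The plan is to apply Corollary~\ref{cor:law-with-mapping-tori} with the commutator law $w = [a_1, a_2] \in F_2$, so that the $w$--law-power alternative coincides with the power alternative. After passing to the barycentric subdivision of the Bass--Serre tree $T$, one may assume $G$ acts without inversions, and the stabilisation hypothesis is part of the assumptions. It thus suffices to verify that for every $C \in \mathfrak{C}$ and every $z \in G$ with $C \trianglelefteq H := \langle C, z\rangle_G$, the group $H$ satisfies the power alternative.

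First I would analyse the structure of such a $C$. Since $G$ acts without inversions, the pointwise stabiliser of a finite set of vertices coincides with the pointwise stabiliser of its convex hull $S \subseteq T$, so $C$ is the pointwise stabiliser of a subtree. When $S$ is a single vertex, $C$ is a vertex group, hence hyperbolic of one of the three listed types. When $S$ contains an edge $e$, then $C \subseteq G_e$; since edge groups are quasiconvex in their vertex groups, Short's lemma (intersections of quasiconvex subgroups of a hyperbolic group are quasiconvex) implies $C$ is finitely generated and quasiconvex in $G_e$. As $G_e$ is free or a closed hyperbolic surface group, $C$ is then either trivial, virtually infinite cyclic, finitely generated free of rank $\geq 2$, or, in the finite-index case inside a surface edge group, virtually a closed surface group.

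Next I would analyse $H$. The quotient $H/C$ is cyclic. If it is finite, then $C$ has finite index in $H$ and Lemma~\ref{lem:wpa virtual property} reduces the problem to the power alternative for $C$, which holds because $C$ is hyperbolic. If $H/C \cong \mathbb{Z}$, then $H \cong C \rtimes_\phi \mathbb{Z}$ for some $\phi \in \operatorname{Aut}(C)$, and I would split into four cases on $C$. If $C$ is virtually cyclic, then $H$ is virtually abelian of rank $\leq 2$ and the alternative is immediate. If $C$ is finitely generated free of rank $\geq 2$, invoke Corollary~\ref{corintro:free-by-Z}. If $C$ is a closed surface group, then $H$ is the fundamental group of a closed $3$-manifold fibering over $S^1$, and the power alternative follows from the $3$-manifold applications developed in Section~\ref{sec:recovering}. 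Finally, if $C$ is one-ended hyperbolic with $\operatorname{Out}(C)$ finite, some power $\phi^k$ is conjugation by an element $a \in C$, so that $a^{-1}z^k$ centralises $C$ and $\langle C, a^{-1}z^k\rangle \cong C \times \mathbb{Z}$ is a finite-index subgroup of $H$; combining Lemma~\ref{lem:wpa stable direct product} with Lemma~\ref{lem:wpa virtual property} then concludes this case.

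The main obstacle is the surface-by-cyclic case, which is not self-contained within the combinatorial framework of Section~\ref{sec:tree-action-criterion} and requires the relative hyperbolicity combination theorem (Theorem~\ref{thmintro:rel-hyp}) together with the geometric structure of such mapping tori (hyperbolic, Seifert-fibered, or graph-manifold, according to whether the monodromy is pseudo-Anosov, periodic, or reducible). A secondary technicality is the identification of $C$ when some adjacent vertex groups are one-ended hyperbolic: here one again appeals to Short's lemma to ensure that $C$ is finitely generated and hyperbolic, so that the case analysis above applies uniformly.
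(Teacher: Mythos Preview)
Your strategy is the same as the paper's: apply Corollary~\ref{cor:law-with-mapping-tori} with $w=[a_1,a_2]$, identify the intersections $C\in\mathfrak C$ as free or surface groups (or vertex groups), and then invoke Corollary~\ref{cor:free-by-Z}, Corollary~\ref{cor:3-manifolds}, or the finite--$\operatorname{Out}$ trick for the mapping-torus subgroups $H=\langle C,z\rangle$. The case split and the endgame match the paper exactly.

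The one place where your argument is too quick is the sentence ``Short's lemma implies $C$ is finitely generated and quasiconvex in $G_e$.'' Short's lemma applies to the intersection of quasiconvex subgroups of a \emph{single} hyperbolic group, but the vertex stabilisers $G_{v_1},\dots,G_{v_n}$ you are intersecting do not all live inside one hyperbolic group (nothing says $G$ itself is hyperbolic). What is actually needed is an induction along the convex hull $S$: if $e,e'$ are adjacent edges of $S$ sharing the vertex $v$, then $G_e$ and $G_{e'}$ are quasiconvex in the hyperbolic group $G_v$, so $G_e\cap G_{e'}$ is quasiconvex in $G_v$ and hence in $G_e$; iterating edge-by-edge shows $C$ is quasiconvex in any $G_e$ with $e\subset S$. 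This is precisely the induction the paper carries out. Once you insert that step (and note that a finite-index subgroup of a closed surface group is again a closed surface group, so ``virtually surface'' is not needed), your proof is complete and coincides with the paper's.
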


\begin{proof}
Let $\mathfrak C_0$ be the set of subgroups $\stab{G}{v}$ for $v\in\Ver T$.  Let $\mathfrak C$ be the set of subgroups of $G$ arising as finite intersections of elements of $\mathfrak C_0$.  Let $C_1,\ldots,C_n\in\mathfrak C_0$ and let $v_i$ be a vertex of $T$ with $\stab{G}{v_i}=C_i$. 

Let $C=\bigcap_{i=1}^nC_i$.  We claim that $C$ is hyperbolic.  Indeed, if $n=1$, this holds by hypothesis; in this case it is immediate that $C=C_n$ is quasiconvex in $C_n$.

Assume by induction that for all $J\subsetneq\{1,\ldots,n\}$, the subgroup $C^J=\bigcap_{i\in J}C_i$ is quasiconvex in $C_i$ for all $i\in J$.  Let $T'\subset T$ be the finite tree which is the convex hull of $\{v_1,\ldots,v_n\}$.  By relabelling, we can assume $v_n$ is a leaf of $T'$.  Let $T''$ be the convex hull of $\{v_1,\ldots,v_{n-1}\}$.  Let $e_1,e_2,\ldots,e_k$ be the sequence of consecutive edges in a shortest path in $T'$ from $v_n$ to an arbitrary vertex in $\{v_1,\ldots,v_{n-1}\}$, which we may assume is $v_{n-1}$.  Let $C'=C^{\{v_1,\ldots,v_{n-1}\}}$.  So, by the induction hypothesis, $C'$ is quasiconvex in $C_{n-1}$ (and hence hyperbolic).  Now, $C=C'\cap \bigcap_{j=1}^k \stab{G}{e_j}$.  Since edge-groups are intersections of their incident vertex groups, are quasiconvex in those vertex groups by assumption, and since finite intersections of quasiconvex subgroups of a hyperbolic group are quasiconvex, it follows that $C$ is quasiconvex in $C_n$.  In particular, each $C\in\mathfrak C$ is hyperbolic.  By Theorem \ref{thm:hyperbolic} below (which is a well-known fact about hyperbolic groups; see e.g. \cite{loh2017geometric}), each $C\in\mathfrak C$ satisfies Wise's power alternative.

Now let $C\in\mathfrak C_0$ and let $H=\langle C,z\rangle_G$ where $z\in G$ normalises $C$.  So, $H$ is the mapping torus of some automorphism of $C$.  If $C$ is the intersection of at least two vertex groups, our hypotheses ensure it is free or a surface group, so either Corollary \ref{cor:free-by-Z} or Corollary \ref{cor:3-manifolds} implies the power alternative for $H$.  Otherwise, $C$ is a vertex group, so either the same argument applies, or $\operatorname{Out}(C)$ is finite.  In the latter case, $H$ is virtually a direct product of $\integers$ with a finite-index subgroup of $C$, so Lemma \ref{lem:wpa virtual property} and Lemma~\ref{lem:wpa stable direct product} imply that $H$ satisfies the power alternative.  Now the result follows from Corollary \ref{cor:law-with-mapping-tori}.   
\end{proof}

\begin{rem}\label{rem:JSJ-question}
Corollary \ref{cor:graph-of-free-groups} is intended mainly to illustrate the technical conditions in Corollary \ref{cor:law-with-mapping-tori}, so we have probably not given the strongest possible statement.  Specifically, one can probably allow more general vertex and edge groups, at the expense of complicating the proof in a tangential way.  Indeed, if $C\in\mathfrak C$ is allowed to be a hyperbolic group that is not free or a surface group, then one can presumably consider first the Grushko decomposition of $C$, and then a JSJ decomposition of each one-ended vertex group, to analyse the subgroup $\langle C,z\rangle_G\cong C\rtimes \integers$.  In other words, one can likely make a stronger statement if equipped with a generalisation of Corollary \ref{cor:3-manifolds} and Corollary \ref{cor:free-by-Z} for hyperbolic-by-cyclic groups.
\end{rem}

Now we turn to negative examples, all involving commensuration of edge groups in one way or another.

\begin{example}\label{exmp:baum-sol}
Let $G$ be the Baumslag-Solitar group $\basol(1,n)$.  The action of $G$ on the Bass-Serre tree~$T$ of the usual splitting as an ascending HNN extension of $\integers$ fixes a point in $\partial T$, so any statement, like Theorem \ref{thm:wpa for action on real tree}, that assumes the power alternative for stabilisers of boundary points cannot give useful information.  And in fact $G$ does not satisfy the power alternative when $|n|>1$, since $[G,G]$ is isomorphic to the additive group of $n$--adic rationals and hence $G$ satisfies the law $[[a,b],[c,d]]=1$ over $F_4$.
\end{example}

\begin{example}\label{exmp:leary-minasyan}
Suppose that $G$ is a finite graph of groups whose vertex groups are finite-rank free abelian groups and whose edge groups have finite index in the incident vertex groups.  Groups of this type with many interesting properties were constructed by Leary-Minasyan in \cite{leary2021commensurating}.

For such $G$, the Bass-Serre tree $T$ is locally finite and $G$ acts on $T$ cocompactly.  Let $\mathfrak C$ be the class of finite-rank free abelian subgroups of $G$.  Obviously $\mathfrak C$ is closed under intersections and elements of $C$ trivially satisfy the power alternative.  

We consider the specific example $G=G_{k,m}$ from \cite[Example 9.4]{leary2021commensurating}, with $-2m<k<2m$ and $k\not\in\{0,\pm m\}$.  Such a $G$ does not satisfy the power alternative, by \cite[Corollary 9.6]{leary2021commensurating}.  

Then by \cite[Corollary 9.3]{leary2021commensurating}, $G$ is a \cat 0 group, and in fact, by \cite[Theorem 7.2 and Theorem 7.5]{leary2021commensurating}, $G$ is a uniform lattice in $\euclidean^2\times T$.  In particular, the vertex and edge stabilisers for the action on $T$ are in $\mathfrak C$.  However, the \goodname~property fails in this case.
\end{example}

\begin{example}\label{exmpl:irreducible-lattice}
Let $T_1,T_2$ be locally finite trees and suppose that $G\leq \operatorname{Isom}(T_1)\times \operatorname{Isom}(T_2)$ is a cocompact irreducible lattice as in \citelist{\cite{BurgerMozes}\cite{Wise:CSC}}.  Then the vertex stabilisers and edge-stabilisers are finite-rank free groups; let $\mathfrak C$ be the class of finite-rank free groups in $G$ that are elliptic on $T_1$.  Then $\mathfrak C$ is closed under finite intersections (the stabiliser of any point in $T_1$ is a commensurated subgroup of $G$).  Irreducibility means that the image of $G$ in $\operatorname{Isom}(T_1)$ is non-discrete \cite[Theorem 1.4]{CapraceMonod}. It is an open question when such $T_1\times T_2$ must contain an \emph{anti-torus}, i.e. an isometrically embedded flat $F$ such that the map $F\hookrightarrow T_1\times T_2\to (T_1\times T_2)/G$ does not factor through a $\pi_1$--injective torus, but $F$ is the convex hull in $T_1\times T_2$ of a pair of axes of elements $a_1,a_2$, with $a_i$ acting hyperbolically on $T_i$ (and elliptically on the other tree).  The projection of an anti-torus to one of the $T_i$ contains a ray witnessing the failure of the \goodname~property.  And the subgroup $\langle a_1,a_2\rangle$ has the property that nontrivial powers of $a_1$ and $a_2$ do not commute.  But it is unknown if $G$ satisfies the power alternative even in the absence of  \goodname.
\end{example}

\subsection{Artin groups and visual splittings}
\label{sec:background on artin groups}

We devote the remaining part of the section to prove a sufficient condition determining when an Artin group with a visual splitting satisfies the power alternative. 

\begin{defi}[Artin groups]
    Let $\Gamma=(\Ver\Gamma,\Edge\Gamma)$ be a finite simplicial graph;
    \begin{enumerate}
        \item A \emph{labelling} of~$\Gamma$ is a map $m\colon\Edge\Gamma\to\N_{\ge2}$. If $\{s,t\}$ is an edge of~$\Gamma$, then we write $m_{st}$ in place of $m(\{s,t\})$. We call the pair $(\Gamma,m)$ a presentation graph;
        \item Let $(\Gamma,m)$ be a presentation graph; the \emph{Artin group} associated to $(\Gamma,m)$ is the group $A_\Gamma$ with presentation
        \[
        \Span{\Ver\Gamma\mid\underbrace{sts\cdots}_{m_{st}}=\underbrace{tst\cdots}_{m_{st}}\text{ for all $\{s,t\}\in\Edge\Gamma$}}
        \]
        \item the Coxeter group associated to $\Gamma$ is the quotient $W_\Gamma$ of $A_\Gamma$ by the normal closure of the subgroup generated by $\{s^2:s\in\Ver\Gamma\}$.
    \end{enumerate}
\end{defi}

We will often ``forget'' about the labelling and refer to $A_\Gamma$ as the Artin group associated to~$\Gamma$. \par 

If~$\Gamma$ is a graph and $\Lambda\le\Gamma$ is a subgraph, recall that~$\Lambda$ is said to be \emph{the full subgraph of~$\Gamma$ induced by $\Ver\Lambda$} (or just a full subgraph) if every edge of~$\Gamma$ connecting two vertices of~$\Lambda$ is also an edge of~$\Lambda$.\par
For every subset $S$ of vertices of~$\Gamma$ we can consider the subgroup~$\Span{S}_{A_\Gamma}$ of $A_\Gamma$ generated by $S$. Such a subgroup is called the \emph{standard parabolic subgroup} of $A_\Gamma$ generated by $S$ and is denoted $A_S$. If~$\Gamma_S$ denotes the full subgraph of~$\Gamma$ spanned by $S$, then $A_S$ is  isomorphic to~$A_{\Gamma_S}$ b y a result of van der Lek~\cite{van1983homotopy}. A subgroup of $A_\Gamma$ that is conjugated to a standard parabolic subgroup is called a \emph{parabolic subgroup} of $A_\Gamma$. The inclusion between parabolic subgroups is well-understood.

\begin{thm}[inclusion of parabolic subgroups, {\cite{blufstein2023parabolic}*{Theorem~1.1}}]
    \label{thm:inclusion of parabolic subgroups}
    Let~$\Gamma$ be a finite labelled simplicial graph and let $X,Y\subset\Ver{\Gamma}$. If $gA_Xg^{-1}\le A_Y$ for some $g\in A_\Gamma$, then there exist $Z\subset Y$ and $h\in A_Y$ such that $gA_Xg^{-1}=hA_Zh^{-1}$.

    In particular, if $gA_Xg^{-1}< A_Y$, then $|X|<|Y|$.
\end{thm}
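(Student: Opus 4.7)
The plan is to leverage the theory of parabolic closures in Artin groups. Assuming the (independently nontrivial) fact that any intersection of parabolic subgroups of $A_\Gamma$ is again parabolic, one can define the \emph{parabolic closure} $\operatorname{Pc}_{A_\Gamma}(H)$ of a subgroup $H \leq A_\Gamma$ as the unique smallest parabolic subgroup of $A_\Gamma$ containing $H$, namely the intersection of all parabolic subgroups that contain $H$.

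The central step I would aim for is a \emph{restriction principle}: for any subgroup $H \leq A_Y$, the parabolic closure of $H$ computed in $A_\Gamma$ coincides with its parabolic closure computed inside $A_Y$, where we use van der Lek's theorem to view $A_Y$ as the Artin group $A_{\Gamma_Y}$ in its own right. Granting this, the main assertion follows quickly: setting $P \coloneqq gA_Xg^{-1}$, since $P$ is already parabolic in $A_\Gamma$ we have $\operatorname{Pc}_{A_\Gamma}(P)=P$; the restriction principle applied to $H = P \leq A_Y$ then gives $\operatorname{Pc}_{A_Y}(P)=P$, so $P$ is a parabolic subgroup of the Artin group $A_Y$, i.e.\ $P = hA_Zh^{-1}$ for some $Z\subseteq Y$ and $h\in A_Y$.

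For the ``in particular'' statement, I would use two standard invariants. If $Z \subseteq Y$ and $hA_Zh^{-1}=A_Y$ for some $h\in A_Y$, then comparing abelianisations (which yield $\Z^{|Z|}$ and $\Z^{|Y|}$ respectively) forces $|Z|=|Y|$ and hence $Z=Y$. Thus a proper inclusion $P\subsetneq A_Y$ forces $Z\subsetneq Y$ and so $|Z|<|Y|$. Combined with the isomorphism $A_X\cong gA_Xg^{-1}=hA_Zh^{-1}\cong A_Z$ (whose abelianisation again has rank $|X|=|Z|$), this yields $|X|<|Y|$.

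The main obstacle is the restriction principle itself. Artin groups typically do not admit a retraction $A_\Gamma \twoheadrightarrow A_Y$ onto a standard parabolic subgroup (in contrast to the Coxeter-group case), so there is no immediate way to transport parabolic closures between the two settings. Establishing the principle therefore requires either a delicate combinatorial argument using normal forms in both $A_\Gamma$ and $A_Y$, or a suitable geometric model --- such as a variant of the Deligne complex --- on which parabolic closures can be read off from stabilisers or fixed-point sets. This is the step where the essential new input of \cite{blufstein2023parabolic} must enter, and where purely formal manipulation of the closure operator is not enough.
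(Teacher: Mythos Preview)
This theorem is not proved in the paper at all: it is quoted verbatim from \cite{blufstein2023parabolic}*{Theorem~1.1} and used as a black box. So there is no ``paper's own proof'' to compare against, and your proposal should be judged on its own merits as a sketch of how one might attack Blufstein's result.

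On that score, there is a genuine circularity. The theorem is stated for an \emph{arbitrary} finite labelled simplicial graph~$\Gamma$, with no restriction on the Artin group. Your first step assumes that arbitrary intersections of parabolic subgroups of $A_\Gamma$ are parabolic, so that the parabolic closure $\operatorname{Pc}_{A_\Gamma}(\,\cdot\,)$ is well-defined. But as the paper itself emphasises (Definition~\ref{def:intersection of parabolics, structure of normalisers} and the paragraph following it), the parabolic intersection property is a conjecture, currently known only for restricted classes such as $(2,2)$-free two-dimensional Artin groups and even FC-type Artin groups. So your argument, as written, proves the theorem only for those classes where the intersection property is already established --- and in fact Blufstein's inclusion theorem is one of the tools people use \emph{towards} the intersection property, not a consequence of it. You also correctly identify that the ``restriction principle'' $\operatorname{Pc}_{A_\Gamma}(H)=\operatorname{Pc}_{A_Y}(H)$ is the crux and would require real input; you have not supplied that input, so even granting the intersection property the argument is incomplete.

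Your derivation of the ``in particular'' clause from the main assertion, via abelianisation ranks, is correct and is the standard way to extract that consequence.
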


Group-theoretic results holding for all Artin groups are quite rare and it is common to restrict to certain families of such groups instead. We recall here the definition of the most common ones.

\begin{defi}[families of Artin groups]
    \label{def:families of Artin groups}
    Let~$\Gamma$ be a finite presentation graph. We say that the associated Artin group $A_\Gamma$ is:
    \begin{enumerate}
        \item \emph{dihedral} if $\abs{\Ver\Gamma}=2$; 
        \item \emph{two-dimensional} if, for every $r,s,t$ distinct vertices of~$\Gamma$, 
        \[
        \frac{1}{m_{rs}}+\frac{1}{m_{st}}+\frac{1}{m_{tr}}\le1
        \]
        with the convention that $1/m_{rs}=0$, when $\{r,s\}$ is not an edge of~$\Gamma$;
        \item \emph{spherical} if the associated Coxeter group $W_\Gamma$ is finite;
        \item \emph{of hyperbolic-type} if the associated Coxeter group $W_\Gamma$ is hyperbolic;
        \item \emph{of FC-type} if every clique of~$\Gamma$ induces a spherical parabolic subgroup;
        \item \emph{even} if all the labels of $\Gamma$ are even.
    \end{enumerate}
\end{defi}

Much of our current understanding of Artin groups comes from our understanding of their parabolic subgroups. The following two properties are conjectured to holds for all Artin groups~\cite{godelle2007artin}*{Conjecture~1}.

\begin{defi}[structure of normalisers, intersection of parabolics]
    \label{def:intersection of parabolics, structure of normalisers}
    Let $A_\Gamma$ be an Artin group.
    \begin{enumerate}
        \item For $S\subset\Ver\Gamma$ we define the \emph{quasi-centraliser} of $A_S$ in $A_\Gamma$ as
        \[
        \QZ{A_\Gamma}{A_S}=\{g\in A_\Gamma:gSg^{-1}=S\}
        \]
        We say that $A_\Gamma$ enjoys the \emph{normaliser structure property} if, for every $S\subset\Ver\Gamma$, one has
        \[
        \norm{A_\Gamma}{A_S}=A_S\cdot\QZ{A_\Gamma}{A_S}
        \]
        \item We say that $A_\Gamma$ has the \emph{parabolic intersection property} if, for all parabolic subgroups $P,Q\le A_\Gamma$, the intersection $P\cap Q$ is a parabolic subgroup of $A_\Gamma$.
    \end{enumerate}
\end{defi}

    Godelle proved the normaliser structure property for Artin groups of FC-type and for two-dimensional Artin groups~\citelist{\cite{godelle2003parabolic}\cite{godelle2007artin}}. As for the parabolic intersection property, Blufstein proved it for the family of two-dimensional $(2,2)$-free Artin groups~\cite{blufstein2022parabolic} and Antol\'in and Foniqi proved it for the family of even FC type Artin groups~\cite{antolin2022intersection}. \medskip

\boldparagraph{Visual splittings} 
For an amalgamated product of two groups there is a natural choice of tree to act on, namely the associated Bass-Serre tree. In the context of Artin groups, certain amalgamated product decompositions can be read directly from the defining graph (see for instance~\cite{charney2024acylindrical}).  Let~$\Gamma$ be a presentation graph and let $\Gamma_0\le\Gamma_1,\Gamma_2<\Gamma$ be non-empty induced subgraphs such that $\Gamma_1\cup\Gamma_2=\Gamma$ and $\Gamma_1\cap\Gamma_2=\Gamma_0$. This data gives an isomorphism
\[
    A_{\Gamma_1}*_{A_{\Gamma_0}}A_{\Gamma_2}\cong A_\Gamma
\]
at the level of the associated Artin groups and the amalgamated product $A_{\Gamma_1}*_{A_{\Gamma_0}}A_{\Gamma_2}$ is called a \emph{visual splitting} for $A_\Gamma$. In this section, we will denote by~$T$ the Bass-Serre tree associated with the amalgamation $A_{\Gamma_1}*_{A_{\Gamma_0}}A_{\Gamma_2}$.

\begin{lem}\label{lem:stab_geod_parabolic}
    Let $A_\Gamma$ be an Artin group satisfying the intersection property. Consider the action $A_\Gamma \actson T$ on the Bass--Serre tree corresponding to some visual splitting. Then for every geodesic $\gamma$ of $T$, the pointwise stabiliser of $\gamma$ is a parabolic subgroup of $A_\Gamma$. 
\end{lem}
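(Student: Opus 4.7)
The plan is to first observe that the vertex stabilisers for the action $A_\Gamma \actson T$ are parabolic. Indeed, for a visual splitting $A_\Gamma = A_{\Gamma_1} *_{A_{\Gamma_0}} A_{\Gamma_2}$, each vertex stabiliser of $T$ is an $A_\Gamma$-conjugate of $A_{\Gamma_1}$ or $A_{\Gamma_2}$, and each edge stabiliser is a conjugate of $A_{\Gamma_0}$; all of these are parabolic subgroups. Since a point in the interior of an edge has stabiliser equal to the edge stabiliser, which in turn equals the intersection of the two incident vertex stabilisers, the pointwise stabiliser of a geodesic $\gamma$ coincides with the intersection
\[
\pstab{A_\Gamma}{\gamma} = \bigcap_{v \in \Ver{\gamma}} \stab{A_\Gamma}{v}.
\]

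The main obstacle is that $\Ver{\gamma}$ may be infinite (for geodesic rays or bi-infinite geodesics), while the parabolic intersection property guarantees only that \emph{finite} intersections of parabolic subgroups are parabolic. To bypass this, I would enumerate $\Ver{\gamma} = \{v_0, v_1, v_2, \ldots\}$ and consider the descending chain
\[
P_n = \bigcap_{i=0}^{n} \stab{A_\Gamma}{v_i},
\]
so that $\pstab{A_\Gamma}{\gamma} = \bigcap_{n \in \N} P_n$. By the parabolic intersection property applied iteratively, each $P_n$ is a parabolic subgroup; write $P_n = g_n A_{X_n} g_n^{-1}$ for some $g_n \in A_\Gamma$ and $X_n \subset \Ver{\Gamma}$.

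The key step is to invoke the second part of Theorem~\ref{thm:inclusion of parabolic subgroups}: the inclusion $P_{n+1} \subseteq P_n$ yields $|X_{n+1}| \leq |X_n|$, and a strict inclusion would force the strict inequality $|X_{n+1}| < |X_n|$. Since $(|X_n|)_{n \in \N}$ is a non-increasing sequence of non-negative integers, it stabilises, and consequently the chain $(P_n)_{n \in \N}$ itself stabilises: there exists $N \in \N$ such that $P_n = P_N$ for all $n \geq N$. Therefore
\[
\pstab{A_\Gamma}{\gamma} = \bigcap_{n \in \N} P_n = P_N,
\]
which is parabolic, as required. Note that this argument is independent of the choice of enumeration of $\Ver{\gamma}$, since any two such enumerations produce cofinal chains in the poset of parabolic subgroups ordered by inclusion.
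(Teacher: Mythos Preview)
Your proposal is correct and follows the same approach as the paper: identify the pointwise stabiliser of $\gamma$ as the intersection of the vertex stabilisers along $\gamma$ (all of which are parabolic by the definition of a visual splitting), then invoke the intersection property. The paper's proof is a single sentence to this effect.

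One point worth noting: you are more careful than the paper on the issue of infinite geodesics. The intersection property, as stated in Definition~\ref{def:intersection of parabolics, structure of normalisers}, is literally about binary intersections, and hence a priori only yields that \emph{finite} intersections of parabolics are parabolic. Your descending-chain argument via Theorem~\ref{thm:inclusion of parabolic subgroups} closes this gap cleanly. The paper's own proof simply asserts that ``the result then follows from the intersection property,'' leaving this step implicit; the chain-stabilisation argument does appear in the paper, but only in the proof of the \emph{next} lemma (Lemma~\ref{prop:action on BS tree has property}), where it is used to deduce the stabilisation property from Lemma~\ref{lem:stab_geod_parabolic}. Your version folds both steps into one, which is arguably tidier.
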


\begin{proof}
    Stabilisers of vertices of $T$ are parabolic subgroups by definition of a visual splitting. Since the action on $T$ is without inversion, an element of $A_\Gamma$ fixes $\gamma$ pointwise if and only if it fixes every vertex of that geodesic, and the result then follows from the intersection property for $A_\Gamma$.
\end{proof}

\begin{lem}
    \label{prop:action on BS tree has property}
    Let $A_\Gamma$ be an Artin group satisfying the intersection property. Consider the action $A_\Gamma \actson T$ on the Bass--Serre tree corresponding to some visual splitting. Then this action has the \goodname ~property.
\end{lem}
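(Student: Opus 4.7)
The plan is to reduce the stabilisation property to a finiteness statement for chains of parabolic subgroups of $A_\Gamma$. The two inputs are Lemma~\ref{lem:stab_geod_parabolic}, which identifies pointwise stabilisers of geodesics in $T$ with parabolic subgroups of $A_\Gamma$ (using the intersection property), and Theorem~\ref{thm:inclusion of parabolic subgroups}, which bounds the length of chains of parabolic subgroups in terms of $|\Ver\Gamma|$.

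First I would fix a monotonic sequence of geodesics $\{\gamma_n\}_{n\geq 1}$ of $T$ with $\ell(\gamma_n) \to +\infty$, and let $H_n$ denote the pointwise stabiliser of $\gamma_n$. By Lemma~\ref{lem:stab_geod_parabolic}, each $H_n$ is a parabolic subgroup of $A_\Gamma$, so there exists $X_n \subset \Ver\Gamma$ and $g_n \in A_\Gamma$ with $H_n = g_n A_{X_n} g_n^{-1}$. The monotonicity of $\{\gamma_n\}$ for inclusion translates directly into opposite monotonicity for $\{H_n\}$: if $\gamma_n \subset \gamma_{n+1}$ then $H_{n+1} \subseteq H_n$, and conversely.

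Next I would apply the ``in particular'' part of Theorem~\ref{thm:inclusion of parabolic subgroups}: any strict inclusion $P \subsetneq Q$ between parabolic subgroups of $A_\Gamma$ with $P$ conjugate to $A_X$ and $Q$ conjugate to $A_Y$ forces $|X| < |Y|$. Since the ranks $|X_n|$ lie in the finite set $\{0, 1, \ldots, |\Ver\Gamma|\}$, a strictly monotonic chain of parabolic subgroups of $A_\Gamma$ contains at most $|\Ver\Gamma|+1$ terms. Applied to $\{H_n\}$, this forces the chain to stabilise after finitely many steps, which is exactly the conclusion.

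I do not expect a serious obstacle here: the only delicate points are verifying that nested geodesics give reverse-nested pointwise stabilisers (immediate) and invoking Theorem~\ref{thm:inclusion of parabolic subgroups} to obtain a strictly monotonic rank function on parabolic subgroups under strict inclusion. It is worth noting that the hypothesis $\ell(\gamma_n)\to +\infty$ is not actually used in the argument: the stabilisation is automatic from the finiteness of $\Ver\Gamma$. The key structural input is really the intersection property, which is what makes Lemma~\ref{lem:stab_geod_parabolic} available and allows the entire argument to stay within the world of parabolic subgroups.
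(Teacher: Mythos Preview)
Your proposal is correct and follows essentially the same approach as the paper: invoke Lemma~\ref{lem:stab_geod_parabolic} to identify the pointwise stabilisers $H_n$ as parabolic subgroups, then use Theorem~\ref{thm:inclusion of parabolic subgroups} to conclude that strict chains of parabolics have bounded length, forcing stabilisation. Your observation that the hypothesis $\ell(\gamma_n)\to+\infty$ is not actually needed here is also correct.
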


\begin{proof}
    Since stabilisers of geodesics are parabolic subgroups by Lemma~\ref{lem:stab_geod_parabolic}, the result follows from the fact that strict chains of parabolic subgroups have bounded length by Theorem~\ref{thm:inclusion of parabolic subgroups}.
\end{proof}

\boldparagraph{Proof of Corollary~\ref{corintro:wpa visual splitting artin}} The key result for the proof of Corollary~\ref{corintro:wpa visual splitting artin} is the following observation.

\begin{lem}
    \label{lem:stabiliser-by-cyclic have wpa}
    Let $A_\Gamma\cong A_{\Gamma_1}*_{A_{\Gamma_0}}A_{\Gamma_2}$ be an Artin groups expressed as a visual splitting satisfying the hypotheses of Corollary~\ref{corintro:wpa visual splitting artin}. Let $\mathfrak C$ be the set of parabolic subgroups of $A_\Gamma$ that are conjugated with a parabolic subgroup of $A_{\Gamma_1}$ or $A_{\Gamma_2}$. For every $z\in A_{\Gamma}$ and for every $P\in\mathfrak C$ such that $P\trianglelefteq\Span {P,z}=\colon H$,
    either $H$ is virtually $P$ or $H$ contains a finite-index subgroup of the form $P\times\Z$. In particular, $H$ satisfies the power alternative.
\end{lem}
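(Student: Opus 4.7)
The plan is to first conjugate within $A_\Gamma$ so that $P = A_S$ is a \emph{standard} parabolic with $S \subseteq \Ver{\Gamma_i}$ for some $i \in \{1,2\}$; this is harmless since conjugation preserves whether $H$ is virtually $P$ or virtually $P \times \Z$. The normaliser structure property then gives
\[
z \in \norm{A_\Gamma}{A_S} = A_S \cdot \QZ{A_\Gamma}{A_S},
\]
so, after absorbing a factor lying in $A_S = P$ into $z$ (which does not change $H = \Span{P,z}$), I may assume $z \in \QZ{A_\Gamma}{A_S}$, equivalently $zSz^{-1} = S$.

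Since $S$ is finite and $z$ permutes it, some integer $k \geq 1$ has the property that $z^k$ fixes $S$ pointwise, and hence commutes with every generator of $A_S$; in particular $z^k \in \comm{A_\Gamma}{P}$. The quotient $H/P$ is cyclic, generated by the image of $z$, and I distinguish two cases depending on its order.

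If $H/P$ is finite, then $P$ has finite index in $H$, so $H$ is virtually $P$. Otherwise $H/P \cong \Z$, and the image of $z^k$ in this quotient is a non-zero integer, so $\Span{z^k} \cap P = \{1\}$ and $\Span{z^k} \cong \Z$. Because $z^k$ centralises $P$, it follows that $\Span{P,z^k} \cong P \times \Z$, and this subgroup has index $k$ in $H$, so $H$ contains a finite-index subgroup of the form $P \times \Z$.

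To finish, I use that $P$ embeds as a subgroup of either $A_{\Gamma_1}$ or $A_{\Gamma_2}$ (being conjugate in $A_\Gamma$ to one of their parabolics), so $P$ satisfies the power alternative by hypothesis, as the power alternative trivially passes to subgroups. Lemma~\ref{lem:wpa stable direct product} then gives the power alternative for $P \times \Z$, and Lemma~\ref{lem:wpa virtual property} transfers the power alternative from $P$ or $P \times \Z$ up to $H$ in the two respective cases. I expect the main delicate points to be the invocation of the normaliser structure property to reduce $z$ to an element of $\QZ{A_\Gamma}{A_S}$, and the verification of the trivial intersection $\Span{z^k} \cap P = \{1\}$ when $H/P$ is infinite cyclic, which is what underlies the splitting $\Span{P,z^k} \cong P \times \Z$.
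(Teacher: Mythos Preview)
Your proof is correct and follows essentially the same approach as the paper: conjugate to a standard parabolic $A_S$, use the normaliser structure property to replace $z$ by an element of $\QZ{A_\Gamma}{A_S}$, pass to a power of $z$ that centralises $A_S$ using finiteness of $S$, and conclude via Lemmas~\ref{lem:wpa virtual property} and~\ref{lem:wpa stable direct product}. Your case distinction on whether $H/P$ is finite or infinite cyclic is a slightly cleaner organisation than the paper's (which splits instead on whether $z$ is torsion), but the substance is identical.
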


\begin{proof}
    Up to a conjugation in $A_\Gamma$, we can assume that $P$ is a standard parabolic subgroup $A_S$, for some $S\subset\Ver{\Gamma_1}$ or $S\subset \Ver {\Gamma_2}$. Because the power alternative holds for $A_{\Gamma_1}$ and $A_{\Gamma_2}$, every element of $\mathfrak C$ satisfies it. If $z$ is torsion, then $H$ is virtually $A_S$. We can then assume that $z$ has infinite order. 
    By assumption, we have that $z$ normalises $A_S$. By the normaliser structure property for $A_\Gamma$, $\norm{A_\Gamma}{A_S}\cong A_S\cdot\QZ{A_\Gamma}{A_S}$ and, up to a translation in $A_S$, we may assume that $z\in\QZ{A_\Gamma}{A_S}$. Because $S$ is a finite set, a non-trivial power $z^n$ centralises $A_S$. In particular, $H$ (which is a semi-direct product of the form $A_S\rtimes\Z$) has a finite-index subgroup that is isomorphic to a direct product $A_S\times\Z$. The final claim follows by Lemma~\ref{lem:wpa stable direct product}.
\end{proof}

We are now ready to prove Corollary~\ref{corintro:wpa visual splitting artin}.

\begin{proof}[Proof of Corollary~\ref{corintro:wpa visual splitting artin}]
    Let~$T$ be the Bass-Serre tree associated to the visual splitting of $A_\Gamma$. The action satisfies the stabilisation property by Lemma~\ref{prop:action on BS tree has property}. 
    Let $\mathfrak C$ be the family of those subgroups of $A_\Gamma$ that arise as finite intersections of stabilisers of $T$, so, in particular, $\mathfrak C$ consists of parabolic subgroups of $A_\Gamma$ that are conjugated with parabolic subgroups of $A_{\Gamma_1}$ or $A_{\Gamma_2}$. By combining Lemma~\ref{lem:stabiliser-by-cyclic have wpa} and Corollary~\ref{cor:law-with-mapping-tori}, we obtain the claim.
\end{proof}

\begin{rem}\label{rem:graph-prod}
    The approach developed in this section can also be used to recover the power alternative for graph products of groups satisfying the power alternative. Indeed, similarly to Artin groups, there is a well-defined notion of parabolic subgroup and of visual decomposition for these groups. The intersection property for graph products follows from \cite[Corollary~3.6]{antolin2015tits}, and the structure of the normalizer of a parabolic subgroup follows from \cite[Proposition~3.13]{antolin2015tits}. Thus, an application of Corollary~\ref{cor:law-with-mapping-tori}, following a similar proof as above, reduces the power alternative for a given (irreducible) graph product of groups to the power alternative for the various direct products of local groups, and thus to the local groups themselves by Lemma~\ref{lem:wpa stable direct product}.
\end{rem}

Note that every Artin group can be obtained via a sequence of amalgamated products over standard parabolic subgroups, terminating at Artin groups with complete presentation graphs (also called free-of-infinity Artin groups). In particular,  Corollary~\ref{corintro:wpa visual splitting artin} implies the following ``reduction'' theorem, under the hypothesis that the intersection property and the normalizer property hold:

\begin{cor}\label{cor:reduction_free_of_infinity}
    Let $\mathcal{C}$ be a class of Artin groups that is closed under taking standard parabolic subgroups and under taking amalagamated products over standard parabolic subgroups. Suppose that the intersection property and the normalizer property hold for all Artin groups in $\mathcal{C}$. Then if the power alternative holds for all free-of-infinity Artin groups in $\mathcal{C}$,  then it holds for all Artin groups in $\mathcal{C}$. 
\end{cor}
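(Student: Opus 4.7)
The plan is to proceed by strong induction on $n := |\Ver \Gamma|$, reducing any non-free-of-infinity case to strictly smaller Artin groups in $\mathcal{C}$ by means of Corollary~\ref{corintro:wpa visual splitting artin}.

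For the base cases, if $n \leq 1$ then $A_\Gamma$ is trivial or infinite cyclic and satisfies the power alternative trivially (and such a graph is vacuously complete, hence free-of-infinity). More generally, whenever $\Gamma$ is complete, $A_\Gamma$ is free-of-infinity and the power alternative follows immediately from the standing hypothesis. The only remaining degenerate case is $n = 2$ with no edge, where $A_\Gamma \cong F_2$ trivially satisfies the power alternative.

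For the inductive step, assume $A_\Gamma \in \mathcal{C}$ has $n \geq 3$ and $\Gamma$ is not complete. Choose two non-adjacent vertices $s, t \in \Ver\Gamma$ and set $\Gamma_1 := \Gamma \setminus \{s\}$, $\Gamma_2 := \Gamma \setminus \{t\}$, and $\Gamma_0 := \Gamma \setminus \{s,t\}$, which is non-empty because $n \geq 3$. Since $\{s,t\}$ is not an edge of $\Gamma$, we have $\Gamma_1 \cup \Gamma_2 = \Gamma$ and $\Gamma_1 \cap \Gamma_2 = \Gamma_0$, yielding a visual splitting
\[
A_\Gamma \;\cong\; A_{\Gamma_1} *_{A_{\Gamma_0}} A_{\Gamma_2}.
\]
Since $\mathcal{C}$ is closed under standard parabolic subgroups, both $A_{\Gamma_1}$ and $A_{\Gamma_2}$ lie in $\mathcal{C}$. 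As they have strictly fewer vertices than $A_\Gamma$, the induction hypothesis furnishes the power alternative for each of them. Moreover, since $A_\Gamma \in \mathcal{C}$, the standing hypothesis guarantees that $A_\Gamma$ satisfies both the parabolic intersection property and the normaliser structure property. All hypotheses of Corollary~\ref{corintro:wpa visual splitting artin} are therefore met, and the corollary delivers the power alternative for $A_\Gamma$, completing the induction.

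Since Corollary~\ref{corintro:wpa visual splitting artin} carries out all the substantive work, there is no genuine obstacle here; the only items to verify are the routine ones that (i) each standard parabolic of a group in $\mathcal{C}$ stays in $\mathcal{C}$, and (ii) the induction terminates either at a complete (free-of-infinity) presentation graph or at one of the small degenerate cases handled above. The closure of $\mathcal{C}$ under amalgamated products over standard parabolic subgroups is not needed for the reduction itself but ensures that the class $\mathcal{C}$ is stable under the visual splitting construction produced in the inductive step.
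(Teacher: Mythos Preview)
Your proof is correct and follows exactly the approach the paper intends. The paper does not give an explicit proof but simply notes that the result follows from Corollary~\ref{corintro:wpa visual splitting artin} together with the observation that every Artin group can be built from free-of-infinity ones by iterated amalgamated products over standard parabolics; you have spelled out precisely this induction, including the small degenerate cases and the observation that closure of $\mathcal{C}$ under amalgamated products is not actually used in the downward induction.
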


\section{The uniform power alternative for \texorpdfstring{$(2,2)$}{(2,2)}-free triangle-free Artin groups}
\label{sec:uwpa for (2,2)-free triangle-free Artin groups}

In this section, we show that some Artin groups satisfy the uniform power alternative, with a uniform exponent depending only on the set of labels of the defining graph.

\begin{defi}
    Let $A_\Gamma$ be an Artin group with presentation graph $\Gamma$. We say that: 
    \begin{itemize}
        \item $A_\Gamma$ is \emph{triangle-free} if $\Gamma$ does not contain an induced $3$-cycle.
        \item $A_\Gamma$ is \emph{(2,2)-free} if $\Gamma$ does not contain two adjacent edges with label $2$.
    \end{itemize}
\end{defi}

Note that triangle-free Artin groups are in particular $2$-dimensional Artin groups of FC type. Moreover $(2,2)$-free and triangle-free Artin groups are additionally of hyperbolic type. 

\medskip 

Our approach to prove Theorem~\ref{corintro:uniform wpa} is via the following relative notion:

\begin{defi}[relative uniform power alternative]
    Let $N\in\N_{\ge1}$; the Artin group $A_\Gamma$ satisfies the \emph{power alternative relative to complete parabolics} with uniform exponent~$N$ if, for every $g,h\in A_\Gamma$, one of the following conditions hold:
    \begin{enumerate}
        \item $g$ and $h$ belong to a common parabolic subgroup based on a complete subgraph of~$\Gamma$;
        \item $\Span{g^N,h^N}_{A_\Gamma}$ is either abelian or non-abelian free.
    \end{enumerate}
\end{defi}

For the sake of conciseness we will refer to the notion of uniform power alternative relative to complete parabolics just as the relative uniform power alternative. The focus of this section will be to prove the relative uniform power alternative for the class of $(2,2)$-free triangle-free Artin groups:

\begin{thm}[relative uniform power alternative]
    \label{thm:rel wpa artin}
    Let $A_\Gamma$ be an Artin group whose defining graph is $(2,2)$-free and triangle-free. For every $N\in\N_{\ge3}$,  $A_\Gamma$ satisfies the relative power alternative with uniform exponent~$N$.
\end{thm}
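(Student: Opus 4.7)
The plan is to use the action of $A_\Gamma$ on a natural hyperbolic complex --- for instance, its (modified) Deligne complex $D_\Gamma$. Under the triangle-free and $(2,2)$-free hypotheses, $A_\Gamma$ is two-dimensional of hyperbolic type, so $D_\Gamma$ is CAT(0) and Gromov-hyperbolic, and its vertex stabilisers are exactly the complete parabolic subgroups (cyclic groups for the vertices of $\Gamma$, dihedral Artin parabolics for the edges). The parabolic intersection property, established by Blufstein for this class, ensures that intersections of vertex stabilisers remain parabolic; consequently, the action enjoys an analogue of the \goodname~property proved for visual splittings in Lemma~\ref{prop:action on BS tree has property}.

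Given $g,h \in A_\Gamma$, I would then analyse their dynamics on $D_\Gamma$ in parallel with Section~\ref{sec:tree-action-criterion}. If $g$ and $h$ fix a common vertex of $D_\Gamma$, they lie in a common complete parabolic, and case~(1) of the relative power alternative is satisfied. Otherwise, adapting Lemmas~\ref{lem:elliptic-elliptic general case}, \ref{lem:mixed-bounded}, \ref{lem:lox-lox}, and~\ref{lem:mixed-unbounded} to $D_\Gamma$, I would prove that for every $N \geq 3$, either $g^N$ and $h^N$ commute or $\langle g^N,h^N\rangle \cong F_2$. The uniform dihedral power alternative of Huang, with exponent depending only on the edge label, should take care of the subcases where $g,h$ end up in a common dihedral parabolic after passing to a power.

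The hardest step will be the case where $g$ and $h$ share a fixed point $\xi$ at infinity in $\partial D_\Gamma$. Using the ultimate translation length homomorphism $\operatorname{utl}_\xi$ of Lemma~\ref{lem:ses for utl} together with the \goodname~property, I would aim to produce a finite-index subgroup of $\langle g,h\rangle_{A_\Gamma}$ lying in $\Ker{\operatorname{utl}_\xi}$, which by Lemma~\ref{lem:ker of utl is a union} is the union of pointwise stabilisers of a nested sequence of rays converging to $\xi$; combined with the parabolic intersection property, this should force $g,h$ into a common complete parabolic subgroup, contradicting the relative hypothesis. The particular threshold $N \geq 3$ (rather than the optimistic $N \geq 1$) reflects the extremal behaviour of the braid group $A(3) = B_3$ under Huang's dihedral bound, and I would expect it to be precisely the ultimate source of the constraint: when the axes of loxodromic elements share enough fellow-travelling to land their squares in a common dihedral parabolic, cubes are the smallest powers guaranteed to escape.
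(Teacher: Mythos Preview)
Your approach differs substantially from the paper's, and as written has genuine gaps. The paper does not use the Deligne complex for this theorem; instead it argues by induction on $|\Ver\Gamma|$, using at each step a visual splitting $A_\Gamma = A_{\Gamma_1} *_{A_{\Gamma_0}} A_{\Gamma_2}$ and the action on the associated Bass--Serre tree $T$. The two key ingredients are specific to this tree: first (Lemma~\ref{lem:stab_segments_2D}), the pointwise stabiliser of any segment of length at least $2$ in $T$ is infinite cyclic; second (Lemma~\ref{lem:Uniform_hyp_hyp}), if $\Axis{g} \cap \Min{h}$ is long enough then the commutator $[g,h]$ fixes a length-$2$ segment, hence lies in a cyclic parabolic, and applying the height homomorphism $A_\Gamma\to\Z$ forces $[g,h]=1$. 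When the overlap is short, the fact that translation lengths on $T$ are \emph{even} (the tree is bipartite) is exactly what produces a successful ping-pong for every exponent $N\ge 3$ (Corollary~\ref{cor:Uniform_hyp_hyp}): this, not Huang's dihedral bound, is the source of the constraint. The elliptic--elliptic case reduces immediately to a proper parabolic via Lemma~\ref{lem:fps and stable fps coincide}, and the induction closes.

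Your plan to work directly on $D_\Gamma$ is not unreasonable, but several steps do not go through as stated. The lemmas of Section~\ref{sec:tree-action-criterion} rely on unique geodesics and one-dimensionality (e.g.\ Lemma~\ref{lem:concatenation_geodesics_distinct}); $D_\Gamma$ is a two-dimensional \cat{-1} complex, so you would need new arguments, not ``adaptations''. More seriously, your boundary analysis is aimed at the wrong target: two loxodromics on $D_\Gamma$ sharing a point in $\partial D_\Gamma$ need not lie in any common parabolic, and the kernel of $\operatorname{utl}_\xi$ need not be parabolic in this setting, so the scheme of forcing $g,h$ into a common complete parabolic via Lemma~\ref{lem:ker of utl is a union} does not obviously conclude. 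The paper bypasses the boundary case entirely via the commutator--height trick above. Finally, your attribution of $N\ge 3$ to the braid group $B_3$ is mistaken: the \emph{relative} alternative, by definition, stops as soon as $g,h$ land in a common complete parabolic, so Huang's dihedral constants play no role in its proof --- they only enter afterwards, when one combines Theorem~\ref{thm:rel wpa artin} with Theorem~\ref{thm:dihedrals have wpa} to deduce Theorem~\ref{corintro:uniform wpa}.
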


If the group $A_\Gamma$ enjoys the relative power alternative and every standard parabolic subgroup of $A_\Gamma$ based on a complete graph satisfies the uniform power alternative, then $A_\Gamma$ inherits the uniform power alternative in the following sense:

\begin{lem}
    Let $N_0\in\N_{\ge1}$ and let the Artin group $A_\Gamma$ satisfy the relative uniform power alternative with uniform exponent $N$ for every $N\ge N_0$. Let us further assume that, for every complete subgraph $\Lambda\le\Gamma$, there exists $m_\Lambda\in\N_{\ge1}$ such that the group $A_\Lambda$ satisfies the power alternative with uniform exponent $m_{\Lambda}$. We denote by $K$ the number
    \[
    K=\operatorname{lcm}\left\{m_{\Lambda}:\text{$\Lambda\le\Gamma$ complete subgraph}\right\}
    \]
    Then $A_\Gamma$ satisfies the power alternative with uniform exponent
    \[
    N \coloneqq \min\{\alpha K:\text{$\alpha\in\N_{\ge1}$ and $\alpha K\ge N_0$}\}
    \]
\end{lem}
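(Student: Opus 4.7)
The plan is to invoke the relative uniform power alternative with exponent exactly equal to the $N$ defined in the statement, and then in the ``parabolic'' branch of the dichotomy pass to the assumed uniform exponent inside the relevant complete-graph parabolic. First I would observe that $N$ is well-defined: since $K\ge 1$, the set $\{\alpha K:\alpha\in\N_{\ge 1},\,\alpha K\ge N_0\}$ is a non-empty set of positive integers, so its minimum exists. By construction $N\ge N_0$, so the hypothesis applies with exponent $N$, and also $K\mid N$, so in particular $m_\Lambda\mid N$ for every complete subgraph $\Lambda\le\Gamma$.

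Now I would fix arbitrary $g,h\in A_\Gamma$ and apply the relative uniform power alternative with exponent $N$. If we land in the free/abelian branch, then $\Span{g^N,h^N}_{A_\Gamma}$ is either abelian or isomorphic to $F_2$, and we are done. Otherwise there is a complete subgraph $\Lambda\le\Gamma$ and an element $x\in A_\Gamma$ such that $g,h\in xA_\Lambda x^{-1}$. Conjugation is an isomorphism, so it suffices to work inside $A_\Lambda$ with the conjugates $\tilde g=x^{-1}gx$ and $\tilde h=x^{-1}hx$. Applying the $m_\Lambda$-uniform power alternative for $A_\Lambda$, either $\tilde g^{m_\Lambda}$ and $\tilde h^{m_\Lambda}$ commute or they generate a copy of $F_2$.

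Write $N=k\, m_\Lambda$ with $k\in\N_{\ge 1}$. In the commuting case, $\tilde g^N=(\tilde g^{m_\Lambda})^k$ and $\tilde h^N=(\tilde h^{m_\Lambda})^k$ clearly commute, whence $[g^N,h^N]=1$. In the free case, $\tilde g^N$ and $\tilde h^N$ are non-trivial powers of a free basis of a subgroup isomorphic to $F_2$, so they generate a subgroup of that $F_2$ which is free (by Nielsen--Schreier) and non-abelian: indeed, two non-trivial elements of a free group commute if and only if they are powers of a common element, which would force $\tilde g^{m_\Lambda}$ and $\tilde h^{m_\Lambda}$ to be powers of a common element, contradicting the assumption that they freely generate $F_2$. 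Conjugating back, $\Span{g^N,h^N}_{A_\Gamma}\cong F_2$, as required.

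The argument is essentially a bookkeeping exercise, and the main (minor) subtlety is the standard fact recalled above that non-trivial powers of a free basis of $F_2$ still generate a copy of $F_2$; no new geometric input is needed beyond the two hypotheses and the definition of $N$.
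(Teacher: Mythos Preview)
Your proof is correct and follows essentially the same route as the paper: apply the relative uniform power alternative at exponent $N$ (using $N\ge N_0$), and in the parabolic branch use that $m_\Lambda\mid N$ to upgrade the $m_\Lambda$-uniform alternative in $A_\Lambda$ to exponent $N$. You spell out more of the bookkeeping (well-definedness of $N$, the Nielsen--Schreier step for powers of a free basis) than the paper does, but no essential difference.
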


\begin{proof}
    Let $g,h\in A_{\Gamma}$. If $g$ and $h$ do not belong to a common parabolic subgroup based on a complete subgraph, then since $N\ge N_0$ by construction and $A_\Gamma$ satisfies the relative uniform power alternative with uniform exponent $N$, we have that $\Span{g^N,h^N}_{A_\Gamma}$ is either abelian or non-abelian free. If $g$ and $h$ belong to the conjugate of some parabolic $A_\Lambda$ for some complete subgraph $\Lambda\le\Gamma$, then $N$ is a multiple of $m_\Lambda$ by construction, and thus $g$ and $h$ also satisfy the power alternative with exponent $N$.
\end{proof}

As a particular case of this theorem, we obtain Corollary~\ref{corintro:uniform wpa}. We first need the following: 

\begin{thm}[\cite{ciobanu2020equations}*{Section~2}]
    \label{thm:dihedrals have wpa}
    Let $A_\Gamma$ be a spherical dihedral Artin group with label $m\in\N_{\ge3}$ and let us define
    \[
    m'=
    \begin{cases}
        m/2  &\text{if $m$ is even}\\
        2m  &\text{if $m$ is odd}
    \end{cases}
    \]
    Then $A_\Gamma$ contains a subgroups of index $m'$ that is isomorphic to $\Z\times F_k$ for some $k\in\N_{\ge2}$. In particular, $A_\Gamma$ satisfies the power alternative with uniform exponent $m'$.
\end{thm}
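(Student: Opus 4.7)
The plan is to exhibit a subgroup $K \leq A_\Gamma$ of index $m'$ with $K \cong \Z \times F_k$ for some $k \geq 2$; the ``in particular'' conclusion then follows immediately from Lemma~\ref{lem:wpa virtual property}(2), since $\Z \times F_k$ satisfies the $1$-uniform power alternative. Indeed, given $(a,u),(b,v) \in \Z \times F_k$, centrality of the $\Z$-factor ensures they commute iff $u,v$ commute in $F_k$; and otherwise $u,v$ generate $F_2$ in $F_k$, so the projection $\Z \times F_k \twoheadrightarrow F_k$ forces $\langle (a,u),(b,v) \rangle_{\Z \times F_k} \cong F_2$.

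The first step is to identify the centre $Z(A_\Gamma)$. Let $\Delta = \underbrace{aba\cdots}_m$ denote the Garside element; a direct check using the defining relation shows that $Z(A_\Gamma) = \langle \Delta \rangle$ if $m$ is even and $Z(A_\Gamma) = \langle \Delta^2 \rangle = \langle (ab)^m \rangle$ if $m$ is odd, so in both cases the centre is infinite cyclic. Quotienting by $Z(A_\Gamma)$, a short manipulation of the presentation yields
\[
A_\Gamma/Z(A_\Gamma) \;\cong\; \begin{cases} \Z * \Z/(m/2) & \text{if } m \text{ is even,} \\ \Z/2 * \Z/m & \text{if } m \text{ is odd.} \end{cases}
\]
Indeed, in the even case, setting $c = ab$ the Artin relation becomes void and the only remaining relation is $c^{m/2} = 1$; in the odd case, one uses the classical presentation $A_\Gamma \cong \langle x,y \mid x^2 = y^m \rangle$ with $x = \Delta$ and $y = ab$, modding out $\langle y^m \rangle = \langle x^2 \rangle$.

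The second step is to produce, in each case, a normal torsion-free subgroup of $A_\Gamma/Z(A_\Gamma)$ of index exactly $m'$. Consider the homomorphism $\phi\colon A_\Gamma/Z(A_\Gamma) \twoheadrightarrow \Z/m'$ given by projection onto the finite cyclic factor $\Z/(m/2)$ when $m$ is even, and by abelianisation $\Z/2 * \Z/m \twoheadrightarrow \Z/2 \oplus \Z/m \cong \Z/(2m)$ (using $\gcd(2,m) = 1$) when $m$ is odd. In both cases $\phi$ is injective on each cyclic free factor, so its kernel $N$ is a normal subgroup of index $m'$ meeting every conjugate of every factor trivially. By Kurosh's theorem $N$ is therefore free, and a standard Euler characteristic computation $\chi(N) = m' \cdot \chi(A_\Gamma/Z(A_\Gamma))$ gives $k = m/2 \geq 2$ for $m$ even (since $m \geq 4$) and $k = m-1 \geq 2$ for $m$ odd (since $m \geq 3$). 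Pulling $N$ back through $A_\Gamma \twoheadrightarrow A_\Gamma/Z(A_\Gamma)$ yields a subgroup $K$ of index $m'$ fitting into a central extension
\[
1 \to Z(A_\Gamma) \to K \to F_k \to 1,
\]
which splits since $F_k$ is free, giving $K \cong \Z \times F_k$ as required. The one mildly non-routine step is the explicit identification of $A_\Gamma/Z(A_\Gamma)$ as a free product of cyclic groups; everything else is standard bookkeeping.
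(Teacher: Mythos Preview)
The paper does not prove this theorem; it is simply cited from \cite{ciobanu2020equations}*{Section~2}. Your argument is a correct self-contained proof of the statement, following essentially the standard route (identify the centre, recognise the central quotient as a free product of two cyclic groups, pass to a torsion-free finite-index subgroup, and split the resulting central extension over a free group).

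One small inaccuracy: in the even case your map $\phi\colon \Z * \Z/(m/2) \twoheadrightarrow \Z/(m/2)$ is \emph{not} injective on the $\Z$ factor --- it kills it --- so the sentence ``$\phi$ is injective on each cyclic free factor'' is wrong as stated. This does not damage the proof: the kernel still meets every conjugate of the finite cyclic factor trivially, and by Kurosh it is therefore a free product of a free group with conjugates of the $\Z$ factor, hence free. The Euler-characteristic computation then goes through unchanged and gives rank $m/2$. You should rephrase that sentence to say that $\phi$ is injective on the \emph{finite} factor (which is what you actually need to exclude torsion from the kernel).
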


\begin{proof}[Proof of Corollary~\ref{corintro:uniform wpa}]
    By Theorem~\ref{thm:rel wpa artin}, $A_\Gamma$ satisfies the relative power alternative with uniform exponent $N$ for every $N\in\N_{\ge3}$. It is sufficient to show that every standard parabolic subgroup of $A_\Gamma$ based on a complete graph satisfies the uniform power alternative. Therefore, let $\Lambda\le\Gamma$ be a complete subgraph. Because $A_\Gamma$ is two-dimensional, $\Lambda$ has at most two vertices, the only non-trivial case being when~$\Lambda$ has exactly two vertices. In such case, Theorem~\ref{thm:dihedrals have wpa} yields the claim.
\end{proof}

\subsection{Further background on Artin groups}

We collect here some further background results that are not yet known for general Artin groups but which hold when restricting to the class of two-dimensional Artin groups. Note that triangle-free Artin groups are two-dimensional, in particular. In view of the results discussed in Section~\ref{sec:background on artin groups}, we have the following theorem.

\begin{thm}
    \label{thm:(2,2)-free triangle-free have good properties}
    If $A_\Gamma$ is $(2,2)$-free and triangle-free, then $A_\Gamma$ has the intersection property and the normaliser structure property.
\end{thm}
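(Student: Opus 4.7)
The proof is essentially a bookkeeping exercise that assembles two results already cited in the paper (Godelle's for normalisers, Blufstein's for intersections). The plan is to show that the hypotheses of Theorem~\ref{thm:(2,2)-free triangle-free have good properties} place $A_\Gamma$ in the intersection of the two classes to which those results apply.

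First, I would verify that triangle-freeness of $\Gamma$ implies that $A_\Gamma$ is two-dimensional in the sense of Definition~\ref{def:families of Artin groups}(2). Given three distinct vertices $r,s,t$ of $\Gamma$, at least one of the pairs among $\{r,s\}, \{s,t\}, \{t,r\}$ must fail to be an edge of $\Gamma$ (otherwise $\Gamma$ would contain an induced $3$-cycle). With the convention that $1/m_{uv}=0$ when $\{u,v\}\notin\Edge\Gamma$, that term in the two-dimensionality inequality vanishes, and each remaining term is at most $1/2$ because labels are $\geq 2$. Hence $1/m_{rs}+1/m_{st}+1/m_{tr}\leq 1$, proving that $A_\Gamma$ is two-dimensional.

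Next, the normaliser structure property for $A_\Gamma$ is an immediate consequence of Godelle's theorem on normalisers of standard parabolic subgroups in two-dimensional Artin groups \cite{godelle2007artin}, which we cited after Definition~\ref{def:intersection of parabolics, structure of normalisers}. This uses only two-dimensionality (which we just established from triangle-freeness), and does not require the $(2,2)$-free assumption.

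Finally, for the parabolic intersection property, we apply Blufstein's theorem \cite{blufstein2022parabolic}, which establishes the intersection property for all two-dimensional $(2,2)$-free Artin groups. Both of our hypotheses are used here. Combining the two gives both desired properties. There is no real obstacle in this proof — the work has already been done by Godelle and Blufstein, and the only small verification required is that triangle-free graphs yield two-dimensional Artin groups, which is what I carried out in the first step.
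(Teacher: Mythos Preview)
Your proposal is correct and matches the paper's approach exactly: the paper does not give a separate proof of this theorem, but simply notes that it follows from the results discussed in Section~\ref{sec:background on artin groups} (Godelle for the normaliser structure property in two-dimensional Artin groups, Blufstein for the intersection property in two-dimensional $(2,2)$-free Artin groups), together with the observation that triangle-free Artin groups are two-dimensional. Your verification that triangle-freeness implies two-dimensionality is precisely the small detail the paper leaves implicit.
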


\boldparagraph{Normalisers of parabolic subgroups} As mentioned in Section~\ref{sec:wpa for Artin groups expressed as a visual splitting}, normalisers of parabolics of two-dimensional Artin groups are well-understood. For the class that we consider in this section, an even stronger result holds.

\begin{thm}[normalisers of parabolics, {\citelist{\cite{martin2022acylindrical}*{Lemma~4.5}\cite{godelle2007artin}*{Corollary~4.12}}}]
    \label{thm:normalisers parabolics two-dim}
    Let $A_\Gamma$ be a $(2,2)$-free triangle-free Artin group and let $\Lambda\le\Gamma$ be a full subgraph;
    \begin{enumerate}
        \item if~$\Lambda$ consists of a single vertex $a$, then  $\norm{A_\Gamma}{A_\Lambda}=\operatorname{C}_{A_\Gamma}(a)$ and, moreover, there exists $k\in\N$ such that $\operatorname{C}_{A_\Gamma}(a)\cong\Z\times F_k$;
        \item if $\abs{\Ver\Lambda}\ge2$, then $\norm{A_\Gamma}{A_\Lambda}=A_\Lambda$. 
    \end{enumerate}
\end{thm}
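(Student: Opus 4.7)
The plan is to prove the two parts separately, with both reduced via the normaliser structure property (Theorem~\ref{thm:(2,2)-free triangle-free have good properties}) to controlling the quasi-centraliser $\QZ{A_\Gamma}{A_\Lambda}$.

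For part (1), I would first check that $\norm{A_\Gamma}{\Span{a}} = \comm{A_\Gamma}{a}$. An element $g \in \norm{A_\Gamma}{\Span{a}}$ satisfies $gag^{-1} = a^{\pm 1}$; the length homomorphism $\ell : A_\Gamma \to \Z$ sending every standard generator to $1$ then gives $\ell(gag^{-1}) = \ell(a) = 1$, ruling out the exponent $-1$, so $g \in \comm{A_\Gamma}{a}$. For the structural claim $\comm{A_\Gamma}{a}\cong \Z \times F_k$, my plan is to exploit the action of $A_\Gamma$ on its modified Deligne complex $X$, which is a $2$-dimensional \cat{0} complex in the $(2,2)$-free triangle-free setting. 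The generator $a$ fixes a unique vertex $v$ of $X$, so $\comm{A_\Gamma}{a}$ stabilises $v$ and acts on its link $\mathrm{Lk}(v)$, which in this setting is a tree. The kernel of this action is precisely $\Span{a}$, which is central in $\comm{A_\Gamma}{a}$, while the quotient acts freely on $\mathrm{Lk}(v)$ and is therefore free of some finite rank $k$. Since $F_k$ is free the resulting extension splits, and centrality of $\Span{a}$ upgrades the split extension to a direct product.

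For part (2), with $\abs{\Ver\Lambda} \geq 2$, the normaliser structure property yields $\norm{A_\Gamma}{A_\Lambda} = A_\Lambda \cdot \QZ{A_\Gamma}{A_\Lambda}$, so it suffices to prove $\QZ{A_\Gamma}{A_\Lambda} \subseteq A_\Lambda$. Conjugation defines a homomorphism $\QZ{A_\Gamma}{A_\Lambda} \to \mathrm{Aut}(\Lambda)$ whose image consists of label-preserving graph automorphisms of $\Lambda$. My plan is to show separately: (a) the kernel, which equals $\bigcap_{s \in \Ver\Lambda} \comm{A_\Gamma}{s}$, is contained in $A_\Lambda$, by using the $\Z \times F_{k_s}$ structures from part (1) together with the parabolic intersection property (Theorem~\ref{thm:(2,2)-free triangle-free have good properties}) to identify the intersection as a parabolic subgroup forced to sit inside $A_\Lambda$; and (b) every label-preserving automorphism in the image is already induced by conjugation by an element of $A_\Lambda$, so that a representative of each coset of the kernel can be absorbed into $A_\Lambda$. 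The main obstacles will be the two geometric/combinatorial inputs: verifying that $\mathrm{Lk}(v)$ is a tree on which $\comm{A_\Gamma}{a}/\Span{a}$ acts freely (part 1), and settling (a)–(b) in part (2) via a case split on whether $\Lambda$ contains an edge (where the dihedral structure realises the swap $s\leftrightarrow t$ when $m_{st}$ is odd, and centrality phenomena pin down the kernel) or consists of pairwise non-adjacent vertices (where the image in $\mathrm{Aut}(\Lambda)$ should be trivial for lack of any conjugating element in $A_\Gamma$).
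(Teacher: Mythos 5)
This statement is imported from the literature — the paper cites \cite{martin2022acylindrical}*{Lemma~4.5} for part~(1) and \cite{godelle2007artin}*{Corollary~4.12} for part~(2) and gives no proof of its own — so there is no in-paper argument to compare against. Evaluating your attempt on its own terms, it contains a genuine gap in part~(1).

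Your reduction $\norm{A_\Gamma}{\Span{a}} = \comm{A_\Gamma}{a}$ via the length homomorphism is fine. But the geometric setup that follows is wrong on two counts. First, $a$ does \emph{not} fix a unique vertex of the Deligne complex: it fixes the vertex $\Span{a}$, but it also fixes every rank-two vertex $gA_{\{a,b\}}$ with $a\in gA_{\{a,b\}}g^{-1}$ and all rank-zero vertices on the connecting edges, so $\fix{a}$ is an entire subtree (the \emph{standard tree} of $a$, as the paper itself later recalls). Second, even granting an action of $\comm{A_\Gamma}{a}/\Span{a}$ on a tree, this action is not free: a rank-two vertex $A_{\{a,b\}}$ has stabiliser $\comm{A_{\{a,b\}}}{a}\cong\Z^2$ (generated by $a$ and the centre of the dihedral group), so its image modulo $\Span{a}$ is infinite cyclic, not trivial. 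If the quotient acted freely your argument would force it to be free, but it doesn't act freely; the actual reason the quotient is free is a graph-of-groups computation: $\comm{A_\Gamma}{a}$ acts on the standard tree $\fix{a}$ cocompactly with edge stabilisers $\Span{a}$ and vertex stabilisers $\Span{a}$ or $\Z^2\supset\Span{a}$, so after quotienting by the central $\Span{a}$ one gets a graph of groups with trivial edge groups and trivial-or-$\Z$ vertex groups, whose fundamental group is free. That is the argument in the cited source, and it is what your proof needs in place of "acts freely on the link".

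Part~(2) is framed the right way — use the normaliser structure property to reduce to showing $\QZ{A_\Gamma}{A_\Lambda}\subseteq A_\Lambda$ — but the two sub-steps (a) and (b) are left as sketches whose feasibility is unclear. In particular, step~(a) invokes the $\Z\times F_{k_s}$ structure from part~(1), but those centralisers live in $A_\Gamma$, not in $A_\Lambda$, and it takes real work (this is where Godelle's ribbon machinery enters) to show the intersection $\bigcap_s \comm{A_\Gamma}{s}$ lands in $A_\Lambda$; and step~(b) asserts without argument that every induced label-preserving graph automorphism of $\Lambda$ is realised by conjugation inside $A_\Lambda$, which is true for a single odd-labelled edge via its Garside element but is not obvious for general $\Lambda$, especially when $\Lambda$ is disconnected. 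As written, part~(2) is a plausible plan but not a proof.
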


The way to conjugate parabolic subgroups amongst each other is also fully understood for this class of Artin groups, using the key concept of ribbons, which was introduced by Paris~\cite{paris1997parabolic}. The following theorem, which we will need for the proof of Lemma~\ref{lem:stab_segments_2D}, follows as a corollary of the facts that two-dimensional Artin groups satisfy the so-called ``ribbon conjecture''~\cite{godelle2007artin}*{Conjecture~1}.

\begin{thm}[conjugated  parabolics, \cite{godelle2007artin}*{Theorem~3}]
    \label{thm:ribbons}
    Let $A_\Gamma$ be a two-dimen\-sional Artin group and let $X,Y\subset\Ver\Gamma$ be subsets of cardinality at least two. If $A_X$ and $A_Y$ are conjugated in $A_\Gamma$, then $X=Y$.
\end{thm}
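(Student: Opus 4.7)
The plan is to derive the statement from two inputs: Theorem~\ref{thm:inclusion of parabolic subgroups} (Blufstein) and the two-dimensional case of the Paris--Godelle ribbon conjecture, following the strategy indicated by the cited reference \cite{godelle2007artin}.

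First I would establish the cardinality equality $|X|=|Y|$. Applying Theorem~\ref{thm:inclusion of parabolic subgroups} to the inclusion $gA_Xg^{-1}\subseteq A_Y$ produces some $Z\subseteq Y$ and $h\in A_Y$ with $gA_Xg^{-1}=hA_Zh^{-1}$. Applying the same theorem to the reverse inclusion $g^{-1}A_Yg\subseteq A_X$ yields $|Y|\leq|X|$, and hence combined with the analogous bound $|X|\leq|Y|\leq|Z|$ from the first application we get $|X|=|Y|=|Z|$. The ``in particular'' clause of Theorem~\ref{thm:inclusion of parabolic subgroups} then forces $Z=Y$, since otherwise $hA_Zh^{-1}\subsetneq A_Y$ would be strict. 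After left-multiplying $g$ by $h^{-1}\in A_Y$ we may therefore assume $gA_Xg^{-1}=A_Y$ and that $g$ represents a ``minimal'' conjugator between standard parabolics in the sense of ribbon theory.

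Second, I would invoke Godelle's theorem that every minimal conjugator between two standard parabolic subgroups of a two-dimensional Artin group decomposes as a product of \emph{elementary ribbons}. Each elementary ribbon induces an explicit Coxeter-theoretic transformation of the defining labelled subgraph. In the two-dimensional setting, the only elementary ribbons that send a standard parabolic $A_X$ to a \emph{different} standard parabolic $A_{X'}$ are those exchanging the two generators of a dihedral parabolic whose label is odd; such a move modifies a singleton but acts trivially on a vertex set of rank at least two. Hence, under the hypothesis $|X|\geq 2$, every elementary ribbon in the decomposition of $g$ preserves $X$ setwise, and therefore $X=Y$.

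The main obstacle is the second step: the ribbon conjecture is open in general, and even its two-dimensional case requires Godelle's intricate analysis of normalisers and quasi-centralisers of parabolic subgroups. Once that input is granted, the proof reduces to the elementary combinatorial observation that the only ``non-trivial'' ribbon moves in rank at least two fix the defining vertex set.
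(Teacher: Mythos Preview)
The paper does not actually give a proof of this theorem: it is stated as a quoted result, attributed to Godelle's Theorem~3, and the preceding sentence indicates that it ``follows as a corollary of the fact that two-dimensional Artin groups satisfy the so-called ribbon conjecture''. Your proposal is therefore not competing with a proof in the paper; you are sketching the argument behind the citation, which is reasonable and broadly correct in spirit.

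That said, there are two concrete issues in your sketch. First, your Step~1 is both unnecessary and incorrectly argued. The chain ``$|X|\le |Y|\le |Z|$'' is written backwards (you have $Z\subseteq Y$, so $|Z|\le |Y|$), and the assertion that applying Theorem~\ref{thm:inclusion of parabolic subgroups} to $g^{-1}A_Yg\subseteq A_X$ ``yields $|Y|\le |X|$'' does not follow: the theorem only gives $|Y|<|X|$ under a \emph{strict} inclusion, which you do not have. The equality $|X|=|Y|$ is in fact a consequence of the ribbon description itself, so you do not need to establish it separately.

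Second, your Step~2 has the right conclusion but misidentifies the mechanism. In a two-dimensional Artin group every spherical standard parabolic has rank at most two (this is exactly the content of Definition~\ref{def:families of Artin groups}(2)). An elementary $X$-ribbon requires adjoining a vertex $s\notin X$ with $A_{X\cup\{s\}}$ spherical; when $|X|\ge 2$ this would force a spherical parabolic of rank at least three, which does not exist. Hence there are simply \emph{no} elementary ribbons available when $|X|\ge 2$, and the ribbon conjecture then says any conjugator lies in $\norm{A_\Gamma}{A_X}$, forcing $X=Y$. Your description of ribbons ``exchanging two generators of an odd dihedral'' and ``acting trivially on vertex sets of rank at least two'' conflates this with the rank-one picture and is not the correct reason.
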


\boldparagraph{Closure under taking roots} The last property we want to address is the one of having the parabolic subgroups that are closed under taking roots.

\begin{defi}[closure under taking roots]
    Let $A_\Gamma$ be an Artin group; we say that the parabolic subgroups of $A_\Gamma$ are \emph{closed under taking roots} if the following property holds: for every parabolic subgroup $P\le A_\Gamma$, for every $g\in A_\Gamma$ and for every $n\in\N_{\ge1}$, if $g^n\in P$, then $g\in P$.
\end{defi}

Although this property is not known in general, it holds for the class of Artin groups we are studying in this section: 

\begin{thm}[\cite{godelle2007artin}*{Corollary~3.8}]\label{thm:parabolics_closed_roots}
Let $A_\Gamma$ be a two-dimensional Artin group. Then parabolic subgroups of $A_\Gamma$ are closed under taking roots.
\end{thm}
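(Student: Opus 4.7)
My approach combines the intersection property with the normaliser structure property for two-dimensional Artin groups (both of which Godelle establishes in this setting as inputs towards the roots theorem). Since closure under roots is invariant under conjugation, I may assume the parabolic subgroup is standard, $A_S$ for some $S\subseteq\Ver{\Gamma}$, so the task becomes: deduce $g\in A_S$ from the hypothesis $g^n\in A_S$. Let $Q$ denote the parabolic closure of $g^n$, i.e.\ the intersection of all parabolic subgroups containing $g^n$; by the intersection property, $Q$ is itself parabolic, and hence $Q\subseteq A_S$.

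The next step is to show that $g$ normalises $Q$. For each $k\ge 1$, the conjugate $g^kQg^{-k}$ is a parabolic subgroup containing $g^k g^n g^{-k}=g^n$, so by minimality of $Q$ one has $Q\subseteq g^kQg^{-k}$. If any of these inclusions were strict, iteration would build an infinite strictly ascending chain of parabolic subgroups, contradicting Theorem~\ref{thm:inclusion of parabolic subgroups}. Hence $gQg^{-1}=Q$.

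With $g\in N_{A_\Gamma}(Q)$ in hand, I apply the normaliser structure property. Up to conjugation assume $Q=A_T$ for some $T\subseteq\Ver{\Gamma}$, so that $N_{A_\Gamma}(A_T)=A_T\cdot\QZ{A_\Gamma}{A_T}$, and proceed by cases on $|T|$. If $T=\varnothing$, then $g^n=1$; as two-dimensional Artin groups are torsion-free (from the CAT(0)-ness of the Deligne complex due to Charney--Davis), $g=1\in A_S$. If $|T|\geq 2$, the analogue of Theorem~\ref{thm:normalisers parabolics two-dim}(2) gives $N_{A_\Gamma}(A_T)=A_T$, so $g\in A_T\subseteq A_S$. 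Finally, if $|T|=1$ with $Q=\Span{a}$, then $N_{A_\Gamma}(Q)=C_{A_\Gamma}(a)$ decomposes as a direct product $\Span{a}\times C$ with $C$ torsion-free (analogue of Theorem~\ref{thm:normalisers parabolics two-dim}(1)); writing $g=a^i c$ with $c\in C$, the identity $g^n=a^{ni}c^n\in\Span{a}$ forces $c^n=1$, whence $c=1$ and $g\in\Span{a}\subseteq A_S$.

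The principal obstacle is the rank-one case: establishing that the centraliser of every conjugate of a standard generator splits as $\Z\times C$ with $C$ torsion-free, uniformly across two-dimensional Artin groups. This requires the CAT(0) geometry of the Deligne complex together with a careful Garside-theoretic analysis of the dihedral subgroups involved. Once this structural input is in place, the remaining cases follow formally from the normaliser structure property and the bounded length of strict chains of parabolic subgroups.
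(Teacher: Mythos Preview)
The paper does not supply its own proof of this statement; it is quoted from Godelle with a bare citation, so there is no in-paper argument to compare against.

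Your proposal, however, has a genuine gap. You claim that Godelle establishes the intersection property for two-dimensional Artin groups and use it to form the parabolic closure $Q$ of $g^n$. This attribution is incorrect: as recorded after Definition~\ref{def:intersection of parabolics, structure of normalisers}, the intersection property for two-dimensional Artin groups is only known under the additional $(2,2)$-free hypothesis, via Blufstein's 2022 work, well after Godelle's 2007 paper. Since the statement concerns \emph{all} two-dimensional Artin groups, the very first step of your argument is unavailable in the required generality. A similar issue arises in your case $|T|\geq 2$: you invoke $\norm{A_\Gamma}{A_T}=A_T$, but Theorem~\ref{thm:normalisers parabolics two-dim} is stated only for $(2,2)$-free triangle-free Artin groups; for general two-dimensional Artin groups one only has $\norm{A_\Gamma}{A_T}=A_T\cdot\QZ{A_\Gamma}{A_T}$, and quasi-central elements outside $A_T$ still need to be handled. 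Together with the gap you already flag in the rank-one case, the argument as written does not establish the theorem in the stated generality.
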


\begin{lem}
    \label{lem:fps and stable fps coincide}
    Let $A_\Gamma$ be an Artin group whose parabolics are closed under taking roots and let us assume that it is expressed as a visual splitting $A_{\Gamma_1}*_{A_{\Gamma_0}}A_{\Gamma_2}$ with associated Bass-Serre tree~$T$. For every element $g\in A_\Gamma$ acting elliptically on~$T$,
    \[
    \stabfix g = \fix g.
    \]
    In particular, if $g,h\in A_\Gamma$ are elements acting elliptically on~$T$, then either $\Span{g,h}\cong F_2$ or $g$ and $h$ belong to a common proper parabolic subgroup. 
\end{lem}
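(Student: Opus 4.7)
The plan is to establish the equality $\stabfix g = \fix g$ first, and then derive the dichotomy as a direct application of Lemma~\ref{lem:elliptic-elliptic general case}, together with a torsion-freeness observation.

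For the equality, the inclusion $\fix g \subseteq \stabfix g$ is immediate from the definition of $\stabfix g$. For the reverse inclusion, I would exploit the fact that $A_\Gamma$ acts on its Bass--Serre tree $T$ without inversions, as is always the case for amalgamated free product actions. Consequently, for every $x \in T$, the point stabiliser $\stab{A_\Gamma}{x}$ is a parabolic subgroup of $A_\Gamma$: a conjugate of $A_{\Gamma_1}$ or $A_{\Gamma_2}$ if $x$ is a vertex, and a conjugate of $A_{\Gamma_0}$ if $x$ lies in the interior of an edge. If $x \in \stabfix g$, then $g^n \in \stab{A_\Gamma}{x}$ for some $n \neq 0$, and the hypothesis that parabolic subgroups are closed under taking roots forces $g \in \stab{A_\Gamma}{x}$, that is, $x \in \fix g$.

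For the dichotomy, suppose $g,h$ are both elliptic; we may assume they are nontrivial, as otherwise the conclusion is vacuous. I would first observe that applying the root-closure hypothesis to the trivial parabolic subgroup $\{1\} = A_{\emptyset}$ forces $A_\Gamma$ to be torsion-free, so $g$ and $h$ have infinite order. Now I distinguish two cases. If $\stabfix g \cap \stabfix h = \emptyset$, Lemma~\ref{lem:elliptic-elliptic general case} yields $\langle g,h\rangle \cong \langle g \rangle * \langle h \rangle \cong F_2$. Otherwise, by the first part of the lemma, $\fix g \cap \fix h \neq \emptyset$, so $g$ and $h$ jointly stabilise a point $x \in T$, and $\stab{A_\Gamma}{x}$ is a proper parabolic subgroup (proper because in a visual splitting each of $\Gamma_0,\Gamma_1,\Gamma_2$ is a proper subgraph of $\Gamma$).

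There is no substantive obstacle here: the argument is a clean repackaging of the root-closure hypothesis, which is precisely what is needed to upgrade pointwise fixation of a power $g^n$ to pointwise fixation of $g$ itself, together with the elliptic--elliptic analysis already carried out in Lemma~\ref{lem:elliptic-elliptic general case}. The only bookkeeping point is to record that closure under roots applied to the empty parabolic yields torsion-freeness, so the free product of cyclic groups supplied by Lemma~\ref{lem:elliptic-elliptic general case} is genuinely $F_2$.
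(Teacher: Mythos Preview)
Your proof is correct and follows essentially the same approach as the paper's: both arguments reduce the first statement to root-closure of parabolic point-stabilisers and deduce the dichotomy from Lemma~\ref{lem:elliptic-elliptic general case}. Your version is more explicit, and the observation that root-closure of the trivial parabolic yields torsion-freeness (so that $\langle g\rangle * \langle h\rangle$ really is $F_2$) is a useful detail the paper leaves implicit.
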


\begin{proof}
    For the first statement, note that stabilisers of fixed-point sets are parabolic subgroups and these are are closed under taking roots by hypothesis. The proof of the second statement goes along the lines of the proof of Lemma~\ref{lem:elliptic-elliptic general case}. 
\end{proof}

\subsection{Relative uniform power alternative for (2,2)-free triangle-free groups}

The aim of this section is to prove Theorem~\ref{thm:rel wpa artin}, namely that $(2,2)$-free triangle-free Artin groups satisfy the relative uniform power alternative. Therefore, let $A_\Gamma=A_{\Gamma_1}*_{A_{\Gamma_0}}A_{\Gamma_1}$ be a triangle-free and $(2,2)$-free Artin group expressed as a visual splitting and let~$T$ be the Bass-Serre tree associated to the splitting. We start with an elementary observation:

\begin{lem}[properties of fixed trees]
    Let $P\le A_\Gamma$ be a parabolic subgroup.
    \begin{enumerate}
        \item the fixed-point set $\fix P = \{x\in T: P\cdot x=x\}$ is a subtree of~$T$;
        \item for every $g\in A_\Gamma$, $g\cdot \fix P = \fix{gPg^{-1}}$;
    \end{enumerate}
\end{lem}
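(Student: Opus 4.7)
The plan for this elementary lemma is to treat the two parts separately, invoking only standard facts about group actions on simplicial trees.

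For part (1), I would write
\[
\fix P = \bigcap_{p\in P}\fix p
\]
and combine two observations. First, the action $A_\Gamma \actson T$ coming from a visual splitting is by graph automorphisms without inversions, a standard feature of Bass--Serre trees of amalgamated products that is already implicitly used in Lemma~\ref{lem:stab_geod_parabolic}. Consequently, for every $p \in P$ the fixed-point set $\fix p$ is either empty or a nonempty subtree of~$T$. Second, an intersection of subtrees of a simplicial tree is convex, and any nonempty convex subset of a simplicial tree is itself a subtree. Putting these two facts together yields that $\fix P$ is a subtree of~$T$ (interpreting the empty set as a degenerate subtree if one wishes to allow it).

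For part (2), the plan is to chase the definitions directly: for any $x \in T$ and $g \in A_\Gamma$,
\[
x \in g\cdot \fix P \ \Longleftrightarrow\ g^{-1}x \in \fix P \ \Longleftrightarrow\ p\cdot g^{-1}x = g^{-1}x \text{ for every } p\in P,
\]
and the last condition is equivalent to $(gpg^{-1})\cdot x = x$ for every $p\in P$, that is, $x \in \fix{gPg^{-1}}$. This gives the double inclusion in one pass.

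I expect no real obstacle in either step; the only subtlety is the absence of inversions in the action, which is standard for the Bass--Serre tree of an amalgamated product and can in any case be arranged by a single barycentric subdivision. Everything else is formal manipulation.
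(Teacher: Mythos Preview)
Your proof is correct; the paper in fact states this lemma as an ``elementary observation'' and gives no proof at all. Your argument --- writing $\fix P$ as an intersection of individual fixed-point sets and using convexity for part (1), and a direct chase of definitions for part (2) --- is exactly the standard justification one would supply and is entirely adequate.
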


We call $\fix P$ the \emph{fixed tree} associated to the parabolic subgroup $P$.

\begin{lem}\label{lem:stab_segments_2D}
     Let $\gamma\subset T$ be a geodesic segment that contains at least two edges of~$T$. If $\pstab{A_\Gamma}{\gamma}$ is an infinite group, then it is an infinite cyclic parabolic subgroup, and~$\gamma$ is contained in a fixed tree $\fix a$, for some $a\in\Ver\Gamma$.
\end{lem}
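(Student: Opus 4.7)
The plan is to identify $P := \pstab{A_\Gamma}{\gamma}$ as a parabolic subgroup contained in the intersection of two distinct conjugates of $A_{\Gamma_0}$, and then use the rigidity of parabolic subgroups (via the parabolic intersection property, the ribbon theorem, and the normaliser structure property) to force this intersection to have rank at most one.

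By Theorem~\ref{thm:(2,2)-free triangle-free have good properties}, the group $A_\Gamma$ has the parabolic intersection property, so Lemma~\ref{lem:stab_geod_parabolic} ensures that $P$ is a parabolic subgroup. Since $\gamma$ contains at least two edges, fix an interior vertex $v$ of $\gamma$ and let $e_1,e_2$ denote the two edges of $\gamma$ incident to $v$. Their stabilisers are two distinct conjugates of $A_{\Gamma_0}$, both contained in $\stab{A_\Gamma}{v}$, which is itself a conjugate of $A_{\Gamma_j}$ for some $j$. Up to an $A_\Gamma$-translation of $\gamma$ (which only changes the final generator $a$ by a conjugation in $A_\Gamma$), we may therefore assume $\stab{A_\Gamma}{v}=A_{\Gamma_1}$, $\stab{A_\Gamma}{e_1}=A_{\Gamma_0}$, and $\stab{A_\Gamma}{e_2}=h A_{\Gamma_0} h^{-1}$ for some $h \in A_{\Gamma_1}\setminus A_{\Gamma_0}$. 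Hence $P \leq A_{\Gamma_0} \cap h A_{\Gamma_0} h^{-1} =: Q$, again a parabolic subgroup by the intersection property.

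The key step is to show that $Q$ has rank at most one as a parabolic subgroup. Applying Theorem~\ref{thm:inclusion of parabolic subgroups} to the inclusions $Q \leq A_{\Gamma_0}$ and $h^{-1} Q h \leq A_{\Gamma_0}$ gives
\[
Q = h_1 A_Z h_1^{-1} = (h h_2)\, A_{Z'}\, (h h_2)^{-1}
\]
for some $Z, Z' \subset \Ver{\Gamma_0}$ and $h_1, h_2 \in A_{\Gamma_0}$. Suppose for contradiction that $|Z| \geq 2$. Since $A_Z$ and $A_{Z'}$ are then conjugate in $A_\Gamma$ via parabolics of size at least two, Theorem~\ref{thm:ribbons} forces $Z = Z'$; the normaliser structure property (Theorem~\ref{thm:normalisers parabolics two-dim}) then places the conjugating element $(h h_2)^{-1} h_1$ in $\norm{A_\Gamma}{A_Z} = A_Z \subset A_{\Gamma_0}$. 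Combined with $h_1, h_2 \in A_{\Gamma_0}$, this yields $h \in A_{\Gamma_0}$, contradicting our choice of $h$. Hence $|Z| \leq 1$.

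To finish, the case $Z = \varnothing$ is ruled out by the assumption that $P$ is infinite, so $Z = \{a\}$ for some $a \in \Ver{\Gamma_0}$ and $P \leq Q = h_1 \langle a \rangle h_1^{-1}$. Because $P$ is itself a parabolic subgroup, Theorem~\ref{thm:parabolics_closed_roots} guarantees that $P$ is closed under taking roots in $A_\Gamma$; hence the fact that some positive power of $h_1 a h_1^{-1}$ lies in $P$ forces $h_1 a h_1^{-1} \in P$, giving $P = Q$. Thus $P$ is infinite cyclic, generated by a conjugate of the standard generator $a \in \Ver\Gamma$, and $\gamma$ lies in the fixed tree of $P$, which is the translate $h_1 \cdot \fix{a}$, as required. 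The main obstacle is the combined use of the ribbon theorem and the normaliser structure property to rule out $|Z| \geq 2$; the rest is bookkeeping with Theorem~\ref{thm:inclusion of parabolic subgroups} and Theorem~\ref{thm:parabolics_closed_roots}.
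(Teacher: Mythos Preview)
Your proof is correct and follows essentially the same route as the paper: identify the pointwise stabiliser as a parabolic, place it inside two distinct conjugates of $A_{\Gamma_0}$ via two adjacent edges of $\gamma$, and use the ribbon theorem together with the self-normalising property of rank $\geq 2$ parabolics to force a contradiction. The only cosmetic difference is that you introduce the intermediate group $Q=A_{\Gamma_0}\cap hA_{\Gamma_0}h^{-1}$ and invoke root closure at the end to get $P=Q$, whereas the paper applies the contradiction argument directly to $P$ and uses that an infinite cyclic parabolic is automatically a conjugate of some $\langle a\rangle$.
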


\begin{proof}
    Because the parabolic intersection property holds for $A_\Gamma$, $\pstab{A_\Gamma}{\gamma}$ is a parabolic subgroup of~$A_\Gamma$ \cite{blufstein2022parabolic}*{Theorem~1.3}. Let us distinguish two cases, depending on the rank of $\pstab{A_\Gamma}{\gamma}$. \medskip
    
    If $\pstab{A_\Gamma}{\gamma}$ is cyclic, then it is of the form $g\Span{a}g^{-1}$ for some $g\in A_\Gamma$ and some $a\in\Ver{\Gamma_0}$. In this case,~$\gamma$ is contained in the fixed tree $g\cdot \fix a$. \medskip
    
    Let us assume by contradiction that $P=\pstab{A_\Gamma}{\gamma}$ is a parabolic subgroup of~$A_\Gamma$ of rank at least two. Let $e=\{A_{\Gamma_1},A_{\Gamma_2}\}$ be the canonical fundamental domain for the action $A_\Gamma\actson T$. Up to a translation in~$T$, we may assume that~$\gamma$ contains $e$. As $A_{\Gamma_2}$ acts transitively on the edges around the vertex of $e$ corresponding to the coset of $A_{\Gamma_2}$, let $h\in A_{\Gamma_2}\setminus A_{\Gamma_0}$ be such that~$\gamma$ contains the concatenation $e\cup h\cdot e$. Because~$P$ fixes $e$ pointwise, we obtain that $P\le\pstab{A_\Gamma}{e}=A_{\Gamma_0}$. Thus, by Theorem~\ref{thm:inclusion of parabolic subgroups}, $P$ is of the form $P=g^{-1}A_\Lambda g$ for some $g\in A_{\Gamma_0}$ and some $\Lambda\le\Gamma_0$. Because $P$ also fixes $h\cdot e$ pointwise, we obtain that $P\le\pstab{A_\Gamma}{h\cdot e}=hA_{\Gamma_0}h^{-1}$ or, equivalently, $h^{-1}Ph\le A_{\Gamma_0}$. Again, by Theorem~\ref{thm:inclusion of parabolic subgroups} there are $M\le\Gamma_0$ and $k\in A_{\Gamma_0}$ such that $h^{-1}Ph=kA_M k^{-1}$ or, equivalently, $(hk)A_M(hk)^{-1}=P$. In particular, the element $ghk$ conjugates $A_\Lambda$ and $A_M$ in $A_\Gamma$. Since $A_\Gamma$ is two-dimensional and~$\Lambda$ contains at least two vertices, it follows from Theorem~\ref{thm:ribbons} that $\Lambda = M$, hence $ghk$ normalises $A_\Lambda$. It now follows from Theorem~\ref{thm:normalisers parabolics two-dim} that $ghk$ belongs to $A_\Lambda$ itself, giving that $h\in A_{\Gamma_0}$, in particular, hence $h\cdot e =e$, a contradiction. 
\end{proof}

\begin{cor}
    If $A_\Lambda$ is a parabolic subgroup of $A_\Gamma$ on more than one vertex, then the fixed-point set of $A_\Lambda$ in~$T$ is either empty, a single vertex, or a single edge.
\end{cor}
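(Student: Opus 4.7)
The plan is to rule out the possibility that $\operatorname{Fix}(A_\Lambda)$ contains a geodesic segment with at least two edges, and then use that $\operatorname{Fix}(A_\Lambda)$ is a subtree to conclude.

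First, recall that $\operatorname{Fix}(A_\Lambda)$ is a subtree of $T$, so if it is neither empty, a single vertex, nor a single edge, then it must contain a geodesic segment $\gamma$ consisting of (at least) two consecutive edges. Assume for contradiction that this is the case. Since $A_\Lambda$ fixes $\gamma$ pointwise, we have the inclusion $A_\Lambda \le \pstab{A_\Gamma}{\gamma}$.

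Next I would apply Lemma~\ref{lem:stab_segments_2D}: the pointwise stabiliser $\pstab{A_\Gamma}{\gamma}$ is either finite or an infinite cyclic parabolic subgroup of the form $g\Span{a}g^{-1}$ for some vertex $a\in\Ver\Gamma$. On the other hand, $A_\Lambda$ is the Artin group on the full subgraph~$\Lambda$ with $\abs{\Ver\Lambda}\ge 2$, so it is infinite and non-cyclic: indeed, any two-generator Artin subgroup is either $F_2$ (no edge), $\Z^2$ (label~$2$), or a non-abelian dihedral Artin group with label $\ge 3$, none of which embeds in an infinite cyclic group nor in a finite group. This contradicts the inclusion $A_\Lambda\le\pstab{A_\Gamma}{\gamma}$.

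Hence $\operatorname{Fix}(A_\Lambda)$ cannot contain a geodesic of combinatorial length at least two. Being a subtree of $T$ (by the ``properties of fixed trees'' lemma above), it must therefore be empty, a single vertex, or a single edge, as required.

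The only step with any subtlety is justifying that $A_\Lambda$ is non-cyclic when $\abs{\Ver\Lambda}\ge 2$, but this is an elementary case check on two-generator Artin groups, so the argument is otherwise immediate from Lemma~\ref{lem:stab_segments_2D}.
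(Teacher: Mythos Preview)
Your proposal is correct and matches the paper's intended argument. In fact, the paper states this corollary without proof, as it is meant to follow immediately from Lemma~\ref{lem:stab_segments_2D} in exactly the way you describe: if $\operatorname{Fix}(A_\Lambda)$ contained a path of length at least two, then $A_\Lambda$ would embed in a group that is either trivial or infinite cyclic, contradicting $|\Ver\Lambda|\ge 2$.
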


Let us now study the different possible dynamics of pairs of elements of $A_\Gamma$ on the Bass-Serre tree~$T$.

\begin{lem}\label{lem:Uniform_hyp_hyp}
    Let $g, h\in A_\Gamma$ be two elements with $g$ acting loxodromically on~$T$ with translation length $\tau(g)$. If the intersection $\mathrm{Axis}(g)\cap \Min h$ has length at least $2\max\{\tau(g), \tau(h)\}+2$ (with the usual convention that $\tau(h)=0$ if $h$ acts elliptically), then $\mathrm{Axis}(g)\subset \Min h$.
\end{lem}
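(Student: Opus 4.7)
The plan is to analyse the action of the commutator $c = [g,h] = ghg^{-1}h^{-1}$ on the segment $I = \Axis{g}\cap \Min h$, using as key structural inputs Lemma~\ref{lem:stab_segments_2D} (pointwise stabilisers of segments of $T$ with at least two edges are infinite cyclic parabolic subgroups) and Theorem~\ref{thm:parabolics_closed_roots} (parabolic subgroups are closed under taking roots). Let $L$ denote the length of $I$, so $L \geq 2\max\{\tau(g), \tau(h)\} + 2$.

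First I would parameterise $\Axis{g}$ as a real line on which $g$ acts by translation in the positive direction, and compute the image $c\cdot x$ for $x$ in the interior of $I$ by tracking $x\mapsto h^{-1}x\mapsto g^{-1}h^{-1}x\mapsto hg^{-1}h^{-1}x\mapsto ghg^{-1}h^{-1}x$. Using that $h$ either fixes $I$ pointwise (when $h$ is elliptic) or translates $I$ by $\pm \tau(h)$ (when $h$ is loxodromic), a direct calculation shows that $c$ fixes pointwise a subsegment $J \subseteq I$ of length at least $L - \tau(g) - \tau(h) \geq 2$. In particular, $J$ contains at least two edges of $T$.

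Then I would split on whether $c$ is trivial. If $c = 1$, then $g$ and $h$ commute, so $g$ preserves $\Min h$ setwise: for loxodromic $h$ the uniqueness of the invariant biinfinite geodesic of a loxodromic tree isometry forces $\Axis{g} = \Axis{h}$, and for elliptic $h$ the $g$-invariant subtree $\fix h \supseteq I$ contains all $\langle g\rangle$-translates of $I$, which cover $\Axis{g}$ since $L > \tau(g)$. If $c \neq 1$, then $c$ is a non-trivial elliptic element fixing $J$ pointwise; by Lemma~\ref{lem:stab_segments_2D} the pointwise stabiliser of $J$ is an infinite cyclic parabolic $\langle z \rangle = k\langle a\rangle k^{-1}$ with $J \subseteq k\cdot \fix a$, and by Theorem~\ref{thm:parabolics_closed_roots} this coincides with $\fix c = \fix z$.

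In the elliptic subcase one then has $h$ and $ghg^{-1}$ both in $\pstab{A_\Gamma}{I\cap g\cdot I}$, which by Lemma~\ref{lem:stab_segments_2D} and closure under roots equals $\langle z \rangle$; a short argument (using once again that parabolics are closed under roots to show that $gzg^{-1}$ must equal $z$ itself, and not merely some other power) shows that $g$ normalises and in fact centralises $\langle z \rangle$ via Theorem~\ref{thm:normalisers parabolics two-dim}. Hence $g$ preserves $\fix z$, and $g$-invariance of $\fix z$ together with $I\subseteq \fix z$ and $L > \tau(g)$ gives $\Axis{g}\subseteq \fix z = \fix h = \Min h$. The main obstacle is the loxodromic-loxodromic subcase, where $h$ itself need not belong to $\langle z \rangle$ and the normaliser argument does not apply directly; here one would exploit the relation $ghg^{-1} = z^m h$ together with the structure of the stabiliser of a common endpoint $\xi \in \partial T$ of $\Axis{g}$ and $\Axis{h}$, using the ultimate translation length homomorphism of Lemma~\ref{lem:ses for utl}, to force $h$ to preserve $\Axis{g}$; axis uniqueness then yields $\Axis{h} = \Axis{g} = \Min h$.
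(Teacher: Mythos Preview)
Your proof and the paper's coincide through the first step: one shows that the commutator $c=[g,h]$ fixes pointwise a subsegment of $\Axis{g}\cap\Min{h}$ containing at least two edges, and then Lemma~\ref{lem:stab_segments_2D} forces $c=s^n$ for some conjugate $s$ of a standard generator. At this point the paper finishes in one line with an idea you are missing: the height homomorphism $A_\Gamma\to\Z$ sending every standard generator to $1$ kills all commutators while sending $s$ to $1$, so $n=0$ and $c=1$. Thus $g$ and $h$ commute, $g$ preserves $\Min{h}$, and since $g$ is loxodromic, $\Axis{g}\subset\Min{h}$. Your whole ``$c\neq 1$'' branch is therefore vacuous.

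More to the point, your handling of that branch has a genuine gap in the loxodromic--loxodromic subcase. You invoke ``a common endpoint $\xi\in\partial T$ of $\Axis{g}$ and $\Axis{h}$'' and the homomorphism of Lemma~\ref{lem:ses for utl}, but the hypothesis only gives a \emph{finite} overlap of the two axes; there is no reason a priori for them to be asymptotic, so $\stab{G}{\xi}$ is not available. The relation $ghg^{-1}=z^m h$ alone does not produce such a $\xi$, and nothing else in your outline does either. (Your elliptic subcase, by contrast, can be made to work via the normaliser-equals-centraliser statement in Theorem~\ref{thm:normalisers parabolics two-dim}, but it is considerably more labour than the abelianisation trick and ultimately unnecessary.)
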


\begin{proof}
    Let $\Lambda \coloneqq \mathrm{Axis}(g)\cap \Min h$, and suppose that this overlap has length at least $2\max\{\tau(g), \tau(h)\}+2$. Let $\Lambda' \subset \Lambda$ be an initial subsegment of~$\Lambda$ of length~$2$. Up to replacing $g$ by $g^{-1}$, we can assume that $g$ and $h$ translate in the same direction along~$\Lambda$, and we get $hg\Lambda' = gh\Lambda'$. In particular, the commutator $[g,h]$  fixes pointwise $\Lambda'$. By Lemma~\ref{lem:stab_segments_2D}, there exists a conjugate $s$ of a standard generator, and an integer~$n$ such that $[g,h]=s^n$. By applying the height homomorphism $A_\Gamma\rightarrow \mathbb{Z}$ that maps every standard generator to $1$, we get that $n=0$, hence $g$ and $h$ commute. In particular, $g$ preserves $\Min h$. Since $g$ acts loxodromically by hypothesis, it follows that $\mathrm{Axis}(g) \subset\Min h$.
\end{proof}

\begin{cor}\label{cor:Uniform_hyp_hyp}
    Let $g, h\in A_\Gamma$ be two elements with $g$ acting loxodromically on~$T$. One of the following two conditions hold:
    \begin{enumerate}
        \item the elements $g$ and $h$ belong to a common proper parabolic subgroup;
        \item for every $n\ge3$, $\Span{g^n,h^n}_{A_\Gamma}$ is either abelian free or non-abelian free. 
    \end{enumerate}
\end{cor}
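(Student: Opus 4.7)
The plan is to exploit Lemma \ref{lem:Uniform_hyp_hyp} via a dichotomy on the length of the overlap $\Lambda := \Axis{g} \cap \Min{h}$, together with the structural results gathered earlier for $(2,2)$-free triangle-free Artin groups. If $\Lambda$ has length at least $2\max\{\tau(g), \tau(h)\} + 2$, Lemma \ref{lem:Uniform_hyp_hyp} yields $\Axis{g} \subset \Min{h}$, and the commutator-vanishing step inside the proof of that lemma (via Lemma \ref{lem:stab_segments_2D} and the height homomorphism) forces $[g, h] = 1$. Since two-dimensional Artin groups are torsion-free, the abelian subgroup $\langle g^n, h^n\rangle$ is then free abelian of rank at most $2$, so case (2) of the corollary holds for every $n \geq 1$.

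Suppose instead $\Lambda$ is short. I would first observe that $g$ and $h$ then cannot share a fixed point in $\partial T$, since a common boundary fixed point would force $\Lambda$ to contain a geodesic ray and hence be unbounded. The goal becomes to show $\langle g^n, h^n\rangle \cong F_2$ for every $n \geq 3$ by a ping-pong argument in $T \cup \partial T$, played between the attracting/repelling regions of $g^n$ and the corresponding regions of $h^n$. The crucial observation, using Theorem \ref{thm:parabolics_closed_roots} via Lemma \ref{lem:fps and stable fps coincide}, is that taking powers does not enlarge the relevant set: $\fix{h^n} = \fix{h}$ when $h$ is elliptic and $\Axis{h^n} = \Axis{h}$ when $h$ is loxodromic. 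Consequently $\Axis{g^n} \cap \Min{h^n}$ stays bounded uniformly in $n$, and $g$ and $h$ do not acquire new common boundary fixed points under powers.

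The main obstacle is to verify that $n = 3$ is already large enough for the ping-pong estimates. For this I would use two further ingredients. First, the Bass--Serre tree of an amalgamated product is bipartite, so any loxodromic isometry has even translation length, in particular at least $2$. Second, a closer inspection of the proof of Lemma \ref{lem:Uniform_hyp_hyp} in fact delivers the sharper overlap bound of order $\tau(g) + \tau(h)$ rather than $2\max\{\tau(g), \tau(h)\}$ in the non-commuting case, because the length-$2$ segment $\Lambda'$ only needs its $g$-- and $h$--translates to remain in $\Lambda$. Together these should make $g^3$ and $h^3$ translate or push far enough past $\Lambda$ that standard tree ping-pong produces $F_2$; when $h$ is elliptic the argument is set up as in Lemma \ref{lem:mixed-bounded}, playing between the two ends of $\Axis{g}$ in $\partial T$ and a neighbourhood of $\fix{h}$ in $T$.
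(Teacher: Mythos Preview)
Your long-overlap case is fine and actually cleaner than the paper's argument: you extract $[g,h]=1$ directly from the proof of Lemma~\ref{lem:Uniform_hyp_hyp}, whereas the paper only cites the conclusion $\Axis{g}\subset\Min{h}$ and then argues via the normaliser of $P=\pstab{A_\Gamma}{\Axis g}$, splitting into the cases $P$ cyclic (normaliser $\cong\Z\times F_k$) and $P$ of rank at least two (self-normalising, whence option~(1)). Your shortcut bypasses this case analysis entirely.

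The short-overlap case, however, has a genuine gap. You never pin down why $n=3$ suffices. Your ``sharper bound'' $|\Lambda|<\tau(g)+\tau(h)+2$ does not by itself give $3\tau(g)>|\Lambda|$ without knowing $\tau(g)\geq\tau(h)$, and your appeal to Lemma~\ref{lem:mixed-bounded} for elliptic $h$ is unhelpful since that lemma produces an exponent depending on $g,h$, not a uniform one. The paper's argument is both simpler and asymmetric: first reduce, when both elements are loxodromic, to the case $\tau(g)\geq\tau(h)$ (and note this is automatic when $h$ is elliptic), so that $|\Lambda|<2\tau(g)+2$. Since translation lengths on the Bass--Serre tree are even, $\tau(g)\geq 2$, so $|\Lambda|\leq 2\tau(g)+1<3\tau(g)\leq k\tau(g)$ for every $k\geq 3$. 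Hence $g^k\Lambda\cap\Lambda=\emptyset$, and a single ping-pong (using only the large displacement of $g^k$, not of $h^k$) gives $\langle g^k,h\rangle\cong\langle g^k\rangle*\langle h\rangle$, whence $\langle g^k,h^k\rangle\cong F_2$. No symmetric estimate and no sharper overlap bound are needed.
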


\begin{proof}
    Let $\Lambda \coloneqq \mathrm{Axis}(g)\cap \mathrm{Min}(h)$. We consider two cases, depending on the size of this overlap. In the case of $h$ acting loxodromically as well, it is not restrictive to assume that $\tau(g)\ge\tau(h)$.
    
    First assume that $\abs \Lambda < 2\max\{\tau(g),\tau(h)\}+2=2\tau(g)+2$. Since the translation length of $g$ on~$T$ is even, it follows that, for every $k\geq 3$,  the segments $g^k\Lambda$ and~$\Lambda$ are disjoint. It now follows from a standard ping-pong argument that $g^k$ and $h$ (and hence $g^k$ and $h^k$) generate a free subgroup. 

     If $\abs \Lambda \geq 2\tau(g)+2$, then it follows from Lemma~\ref{lem:Uniform_hyp_hyp} that $\mathrm{Axis}(g) \subset \Min h$. In particular, $g$ and $h$ normalise $P\coloneqq \pstab{A_\Gamma}{\Lambda}$. Since $A_\Gamma$ satisfies the Intersection Property, it follows that $P$ is a parabolic subgroup. There are thus two cases to consider: if $P$ is a cyclic parabolic subgroup, then its normaliser is of the form $\mathbb{Z}\times F_k$ by the first case of Theorem~\ref{thm:normalisers parabolics two-dim}, so $g$ and $h$ (hence their powers) either commute or generate a non-abelian free subgroup. If $P$ is a parabolic subgroup on at least two generators, then $P$ is self-normalising by Theorem~\ref{thm:normalisers parabolics two-dim}, point two. Thus, $g$ and $h$ are contained in $\norm{A_\Gamma}{P}=P$, which is a proper parabolic subgroup of $A_\Gamma$, as it is contained in some edge stabiliser.
\end{proof}

We are now ready to prove Theorem~\ref{thm:rel wpa artin}:

\begin{proof}[Proof of Theorem~\ref{thm:rel wpa artin}]
    We prove the result by induction on the number of generators. The initialisation case is the case of dihedral Artin groups and this follows from Theorem~\ref{thm:dihedrals have wpa}.

    Let us now assume that the result has been proved for graphs on less than $n\ge 2$ vertices, and let $A_\Gamma$ be an Artin group with a presentation graph~$\Gamma$ on $n+1\ge 3$ vertices, satisfying the assumption of the theorem.  Since $A_\Gamma$ is triangle-free on at least three vertices, it follows that~$\Gamma$ is not a complete graph, hence we can consider a visual splitting of $A_\Gamma$, and the  action of $A_\Gamma$ on the corresponding Bass--Serre tree~$T$. 
    Let $g, h$ be two elements of $A_\Gamma$. There are two cases to consider: 
    \begin{itemize}
        \item if at least one between $g$ and $h$ acts loxodromically on~$T$, it follows from Corollary~\ref{cor:Uniform_hyp_hyp} that either $g$ and $h$ belong to a common proper parabolic subgroup or, for every $k\ge3$, $\Span{g^k,h^k}_{A_\Gamma}\cong F_2$;
        \item if $g$ and $h$ both act elliptically, then the second part of Lemma~\ref{lem:fps and stable fps coincide} gives the claim.
    \end{itemize}
    In both cases, either for every $k\ge3$, $g^k$ and $h^k$ generate a non-abelian free subgroup, or $g$ and $h$ belong to a common proper parabolic subgroup on at most $n$ generators. The result now follows from the induction hypothesis.
\end{proof}

\subsection{Application: Uniform exponential growth} In this subsection, we use previous results on the uniform power alternative to obtain a strong form of uniform exponential growth for certain Artin groups:

\begin{thm}\label{thm:Artin_uniform_growth}
    Let $A_\Gamma$ be a triangle-free $(2,2)$-free Artin group that is not of spherical type. Let $m$ be the smallest multiple of $\mathrm{lcm}_{ab \in \Edge\Gamma}(m_{ab}')$ such that $m \geq 3$, where the coefficients $m_{ab}'$ were defined in Theorem~\ref{thm:dihedrals have wpa}.
    
    Let $S$ be a generating set of $A_\Gamma$. Then there exist $s, t \in S$ such that $s^m$ and $t^m$ generate a non-abelian free subgroup of $A_\Gamma$.
    In particular, $A_\Gamma$ has uniform exponential growth.
\end{thm}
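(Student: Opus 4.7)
The plan is to deduce the theorem from the uniform power alternative established earlier in the section. By Theorem~\ref{corintro:uniform wpa}, for the chosen $m$ every pair $g,h\in A_\Gamma$ satisfies either $[g^m,h^m]=1$ or $\langle g^m,h^m\rangle_{A_\Gamma}\cong F_2$. It therefore suffices to prove, for an arbitrary generating set $S$ of $A_\Gamma$, that one cannot have $[s^m,t^m]=1$ for every $s,t\in S$.

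Suppose for contradiction that $[s^m,t^m]=1$ for all $s,t\in S$, so that $H\coloneqq \langle s^m:s\in S\rangle_{A_\Gamma}$ is an abelian subgroup. The key structural input I would use is that centralisers in $(2,2)$-free triangle-free Artin groups enjoy a \emph{root-closure} property: $\comm{A_\Gamma}{g^n}=\comm{A_\Gamma}{g}$ for every non-trivial $g\in A_\Gamma$ and every $n\neq 0$. Granted this, $t^m\in H\subseteq \comm{A_\Gamma}{s^m}=\comm{A_\Gamma}{s}$ for every $s,t\in S$, so $t^m$ commutes with every generator of $A_\Gamma$ and hence lies in the centre of $A_\Gamma$.

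To conclude, I would invoke that non-spherical triangle-free Artin groups have trivial centre --- which follows, for instance, from looking at the action on the Bass--Serre tree of a visual splitting (since $\Gamma$ is not a complete graph such a splitting exists, and a central element must fix the tree pointwise, forcing it to belong to every vertex stabiliser, which the intersection property then pins down to the trivial subgroup) --- together with the fact that two-dimensional Artin groups are torsion-free. These give $t^m=1$ and then $t=1$ for every $t\in S$, contradicting $A_\Gamma\neq\{1\}$. Once a pair $s,t\in S$ with $\langle s^m,t^m\rangle\cong F_2$ is produced, the uniform exponential growth follows at once: the ball of radius $n$ in $\langle s^m,t^m\rangle$ with respect to the generators $\{s^m,t^m\}$ has at least $3^{n-1}$ elements, each of $S$-length at most $mn$, so the $S$-exponential growth rate satisfies $\lambda_S\geq 3^{1/m}>1$, with a bound depending only on $m$ and not on $S$.

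The main obstacle I expect is justifying the centraliser root-closure property in this generality. The natural route is to combine the closure of parabolic subgroups under taking roots (Theorem~\ref{thm:parabolics_closed_roots}) with the structure of centralisers of non-trivial elements, identifying each such centraliser with a normaliser of a cyclic parabolic subgroup (cf.\ Theorem~\ref{thm:normalisers parabolics two-dim}). Alternatively, one can set up a direct tree-theoretic argument using the visual splitting and the stabilisation-type analysis developed throughout Section~\ref{sec:uwpa for (2,2)-free triangle-free Artin groups} to identify the relevant commuting $m$-th powers.
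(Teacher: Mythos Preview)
Your approach has a genuine gap: the centraliser root-closure property $\comm{A_\Gamma}{g^n}=\comm{A_\Gamma}{g}$ that you rely on is \emph{false} for this class of Artin groups. Take any edge $\{a,b\}$ of $\Gamma$ with even label $m_{ab}$ (say $m_{ab}=4$) and set $g=ab$. Then $g^{m_{ab}/2}=(ab)^{m_{ab}/2}$ is the Garside element of the dihedral parabolic $A_{\{a,b\}}$ and is central there; in particular $a\in \comm{A_\Gamma}{g^2}$. But $a\notin \comm{A_\Gamma}{g}$, since $[a,ab]=1$ would force $[a,b]=1$, impossible for $m_{ab}\geq 3$. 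Hence $\comm{A_\Gamma}{g}\subsetneq \comm{A_\Gamma}{g^2}\subseteq \comm{A_\Gamma}{g^m}$. The same obstruction occurs for odd labels via the square of the Garside element. Your proposed fix through Theorem~\ref{thm:normalisers parabolics two-dim} cannot cover this case: $ab$ is not conjugate to any power of a standard generator (already visible in the Coxeter quotient), so its centraliser is not the normaliser of a cyclic parabolic, and root-closure of parabolics says nothing about it.

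The paper sidesteps this entirely by arguing geometrically on the Deligne complex. Assuming $[s^m,t^m]=1$ for all $s,t\in S$, it splits into cases according to the dynamics on $D_\Gamma$ (with its $\cat{-1}$ metric) and $C_\Gamma$. If some $s_0\in S$ is loxodromic, uniqueness of axes in $\cat{-1}$ forces every other $s\in S$ either to share that axis or to have minset a standard tree containing it; both options collapse $A_\Gamma=\Span{S}$ into the stabiliser of a line or of a standard tree, which is virtually abelian or of the form $\Z\times F_k$, a contradiction. If every $s\in S$ is elliptic, commutation of $m$-th powers forces pairwise intersecting minsets, and the Helly property for $\cat 0$ cube complexes produces a global fixed vertex, forcing $A_\Gamma$ to be dihedral. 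Your ``direct tree-theoretic argument using the visual splitting'' would have to reproduce essentially this analysis; a single Bass--Serre tree is not enough, because the elliptic--elliptic case there only lands you in a proper parabolic, which may still be a dihedral where your centraliser step fails as above.
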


The proof of this theorem relies on the action of these Artin groups on their Deligne complex. We recall the definition of these complexes, with both their simplicial and cubical structure.  

\begin{defi}[simplicial and cubical Deligne complex \cite{charney1995k}]
Let $A_\Gamma$ be an Artin group. The \emph{(simplicial) Deligne complex} is the following simplicial complex:
\begin{itemize}
    \item vertices of $D_\Gamma$ correspond to left cosets $gA_{\Gamma'}$ of spherical standard parabolic subgroups of $A_\Gamma$;
\item simplices correspond to chains $gA_{\Gamma_0} \subsetneq \cdots \subsetneq gA_{\Gamma_k}$, for $g \in A_\Gamma$ and $\Gamma_0 \subsetneq \cdots \subsetneq \Gamma_k \subset \Gamma$ such that each $A_{\Gamma_i}$ is spherical.
\end{itemize}
 The \emph{cubical Deligne  complex} $C_\Gamma$  is the cube complex with the same vertex set as $D_\Gamma$, and where cubes correspond to combinatorial intervals (for the inclusion) between vertices of $C_\Gamma$ of the form $gA_{\Gamma_0}$ and $gA_{\Gamma_k}$, whenever $g \in A_\Gamma$ and $\Gamma_0 \subsetneq \Gamma_k$.

 The group $A_\Gamma$ acts on $C_\Gamma$ and on $D_\Gamma$ by left multiplication on left cosets. These actions are without inversions: An element of $A_\Gamma$ stabilises a cube of $C_\Gamma$ (resp. a simplex of $D_\Gamma$) if and only if it fixes it pointwise.
\end{defi}

The geometry of Deligne complexes is well understood for two-dimensional Artin groups and for FC-type Artin groups, among other classes. Triangle-free Artin groups are in particular two-dimensional and FC-type Artin groups. Moreover, triangle-free Artin groups that are $(2, 2)$-free are also of hyperbolic type. We therefore have the following results:

\begin{thm}[\cite{charney1995k}*{Theorem 4.3.5}]
    Let $A_\Gamma$ be a triangle-free Artin group. Then the cubical Deligne complex $C_\Gamma$ is a \cat 0 cube complex. 
\end{thm}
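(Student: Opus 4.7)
I would apply Gromov's combinatorial characterisation: a cube complex is \cat{0} if and only if it is simply connected and the link of every vertex is a flag simplicial complex. The argument therefore splits into two parts: a dimension/link analysis made easy by the triangle-free assumption, and a simple-connectivity (really, contractibility) argument that leverages the parallel simplicial Deligne complex $D_\Gamma$.

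The plan is to first exploit triangle-freeness to bound the dimension of $C_\Gamma$. If $\Gamma'\subseteq\Gamma$ has at least three vertices, then by triangle-freeness two of them are non-adjacent, so $A_{\Gamma'}$ splits as a non-trivial amalgamated product $A_{\Gamma_1}*_{A_{\Gamma_0}}A_{\Gamma_2}$ over a proper standard parabolic; in particular $A_{\Gamma'}$ contains a non-abelian free subgroup and is not spherical. So spherical standard parabolic subgroups have rank at most two, the intervals $[gA_{\Gamma_0},gA_{\Gamma_k}]$ indexing cubes satisfy $|\Gamma_k|\le 2$, and $\dim C_\Gamma\le 2$.

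Next I would verify flagness of links. Since $\dim C_\Gamma\le 2$, each link is a graph and flagness reduces to the absence of 3-cycles (an empty triangle in the link would be a violation, and there are no $3$-cubes to fill it). The edges of $C_\Gamma$ incident to a vertex $gA_{\Gamma'}$ are of two types: \emph{upward} edges to cosets $gA_{\Gamma'\cup\{s\}}$ with $\{s\}\cup\Gamma'$ supporting a spherical parabolic, and \emph{downward} edges to cosets of $A_{\Gamma'\setminus\{t\}}$ inside $gA_{\Gamma'}$. Squares correspond to rank-two spherical intervals $[gA_\emptyset, gA_{\{s,t\}}]$ with $s,t$ joined by an edge in $\Gamma$. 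I would enumerate the possible combinatorial types of triples of edges at $gA_{\Gamma'}$ (up-up-up, up-up-down, etc.) and check in each case that pairwise squaring of three incident edges would force a third generator to be pairwise braid-related with the other two, producing a triangle in $\Gamma$ and contradicting the hypothesis. The bookkeeping separates cases according to $|\Gamma'|\in\{0,1,2\}$, but triangle-freeness cuts each case quickly.

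The main obstacle is simple-connectivity, which is the deepest ingredient. I would prove it indirectly via the simplicial Deligne complex $D_\Gamma$: one shows that $C_\Gamma$ is a deformation retract of (or homotopy equivalent to) $D_\Gamma$ by observing that each cube of $C_\Gamma$ subdivides into simplices of $D_\Gamma$ coming from the chains in the interval $[gA_{\Gamma_0},gA_{\Gamma_k}]$, and conversely the order complex of a Boolean interval is contractible onto the cube. Then one reduces to showing $D_\Gamma$ is simply connected (in fact contractible). The standard way is to exhibit $D_\Gamma$ as an increasing union of contractible subcomplexes indexed by the normal-form-ordered cosets, carrying out an induction whose inductive step uses that each ``newly attached'' piece is itself contractible because it is the order complex of a poset with a maximum. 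This inductive contractibility argument is the technical core; everything else is essentially formal once triangle-freeness has cut $C_\Gamma$ down to dimension two.
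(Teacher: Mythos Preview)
The paper does not give its own proof of this statement; it simply records it as a citation of Charney--Davis \cite{charney1995k}*{Theorem~4.3.5}, so there is nothing in the paper to compare your argument against beyond the reference itself.

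Your outline is essentially the Charney--Davis strategy: Gromov's link condition plus simple connectivity. A few remarks. The dimension bound is correct but your justification is heavier than needed: two non-adjacent vertices already force the associated Coxeter group to contain an infinite dihedral subgroup, so the parabolic is non-spherical without invoking free subgroups of the Artin group. For the link analysis, what Charney--Davis actually do is identify the link at a vertex $gA_{\Gamma'}$ as a join of two pieces (one coming from cosets below, one from spherical supergraphs above), and the FC-type hypothesis --- of which triangle-free is a special case --- is exactly the condition making the ``upward'' piece flag; your case-by-case plan would work in dimension two but is replaced in the source by this structural identification. The one place where your sketch diverges from the cited proof is simple connectivity: Charney--Davis do not build $D_\Gamma$ as an increasing union indexed by normal forms. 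They obtain simple connectivity from the general theory of developments of simple complexes of groups (the Deligne complex is the development of the complex of groups over the poset of spherical parabolics, and developments of simply connected complexes of groups are simply connected by construction, given van der Lek's identification of the fundamental group). Your inductive-union idea is plausible but underspecified, and making it rigorous would essentially reprove that development result; it is cleaner to invoke it directly.
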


\begin{thm}[\citelist{\cite{charney1995k}*{Proposition 4.4.5}\cite{martin2022acylindrical}*{Section 3.1}}]
    Let $A_\Gamma$ be a triangle-free Artin group. Then the simplicial Deligne complex $D_\Gamma$ admits a \cat 0 metric. Moreover, if $A_\Gamma$ is $(2,2)$-free, then $A_\Gamma$ is of hyperbolic type, and $D_\Gamma$ admits a \cat {-1} metric.
\end{thm}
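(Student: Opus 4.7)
The plan is to argue by contradiction. Suppose that for every pair $s, t \in S$, the subgroup $\langle s^m, t^m\rangle_{A_\Gamma}$ fails to be isomorphic to $F_2$. By Corollary~\ref{corintro:uniform wpa}, $m$ is a uniform exponent for the power alternative in $A_\Gamma$, so every pair $s^m, t^m$ must commute, and hence $H \coloneqq \langle s^m : s \in S\rangle_{A_\Gamma}$ is an abelian subgroup of $A_\Gamma$. I would then exploit the action of $A_\Gamma$ on its CAT$(-1)$ simplicial Deligne complex $D_\Gamma$, which is acylindrical in our setting by \cite{martin2022acylindrical}. Since $D_\Gamma$ is Gromov hyperbolic, a standard consequence of Osin's work \cite{Osin:acyl} gives that $H$ is either elliptic (bounded orbits) or virtually cyclic with respect to this action.

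In the elliptic case, $H$ has bounded orbits, so it fixes a point, hence a simplex, of $D_\Gamma$; in particular $H$ is contained in a conjugate $g A_{\Gamma'} g^{-1}$ of a standard spherical parabolic subgroup. Since parabolic subgroups of two-dimensional Artin groups are closed under taking roots by Theorem~\ref{thm:parabolics_closed_roots}, the membership $s^m \in g A_{\Gamma'} g^{-1}$ upgrades to $s \in g A_{\Gamma'} g^{-1}$ for every $s \in S$. As $S$ generates $A_\Gamma$, we conclude $A_\Gamma \subseteq g A_{\Gamma'} g^{-1}$, so $A_\Gamma$ is of spherical type, contradicting the hypothesis.

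In the virtually cyclic case, torsion-freeness of two-dimensional Artin groups gives $H \cong \mathbb{Z}$ (the trivial case would force $s = 1$ for all $s \in S$, which is absurd). Write $H = \langle h\rangle$ with $h$ loxodromic on $D_\Gamma$ and axis $\gamma = \Axis{h}$. The key CAT$(-1)$ input is that a loxodromic isometry and any of its non-zero powers share the same axis; thus any non-trivial $s \in S$, for which $s^m$ is a non-zero power of $h$ and hence loxodromic with axis $\gamma$, is itself loxodromic with axis $\gamma$. Hence every element of $S$ preserves $\gamma$, so $A_\Gamma = \langle S\rangle_{A_\Gamma}$ preserves $\gamma$ setwise. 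Acylindricity then forces the setwise stabiliser of $\gamma$ to be virtually cyclic, so $A_\Gamma$ is virtually $\mathbb{Z}$; by torsion-freeness, $A_\Gamma \cong \mathbb{Z}$, contradicting non-sphericality.

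The uniform exponential growth conclusion is immediate from the main statement: for a finite generating set $S$, the choice of $s,t \in S$ with $\langle s^m, t^m\rangle \cong F_2$ produces at least $3 \cdot 2^{n-1}$ distinct elements of $A_\Gamma$ in the $S$-ball of radius $mn$, yielding growth rate at least $2^{1/m}$, independent of $S$. The main technical step is the virtually cyclic case, where one must carefully pass from the axis-preservation of the $m$-th powers to that of the original elements (using the CAT$(-1)$ fact that loxodromic isometries share their axis with all non-zero powers), and invoke acylindricity to upgrade ``preserves a bi-infinite geodesic'' to ``virtually cyclic''.
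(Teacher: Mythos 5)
Your proposal does not address the stated theorem. The statement in question is a purely geometric fact: that the simplicial Deligne complex $D_\Gamma$ of a triangle-free Artin group admits a piecewise-Euclidean \cat 0 metric, and that when $\Gamma$ is additionally $(2,2)$-free, the Coxeter group $W_\Gamma$ is hyperbolic and $D_\Gamma$ admits a piecewise-hyperbolic \cat{-1} metric. Any proof must actually \emph{construct} such a metric (in the two-dimensional triangle-free setting, by choosing Euclidean or hyperbolic triangles whose angles are dictated by the labels $m_{st}$) and then verify Gromov's link condition at each vertex, which here amounts to a girth estimate on vertex links; the $(2,2)$-free hypothesis is exactly what forces the strict inequality needed for the \cat{-1} version. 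The paper does not reprove this --- it cites \cite{charney1995k} and \cite{martin2022acylindrical}. Nothing in your argument mentions curvature, links, piecewise metrics, or the Coxeter group, so it cannot be a proof of this statement.

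What you have written is instead a (broadly correct, and slightly streamlined) alternative proof of Theorem~\ref{thm:Artin_uniform_growth}, the uniform exponential growth theorem. Where the paper splits on whether some $s_0\in S$ acts loxodromically on $C_\Gamma$ and then uses the Helly property for \cat 0 cube complexes together with standard-tree lemmas from \cite{hagen2024extra}, you observe that $H=\langle s^m : s\in S\rangle$ is abelian, invoke Osin's trichotomy for the acylindrical action on $D_\Gamma$ to reduce to $H$ elliptic or lineal, and finish via closure of parabolics under roots (elliptic case) or virtual cyclicity of axis stabilisers (lineal case). That is a genuine simplification of the elliptic case, and worth noting --- but as a response to the statement you were given, it is simply the wrong theorem.
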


Note that one can obtain the simplicial Deligne complex by subdividing in a suitable way the cubes of the corresponding cubical Deligne complex. Thus, the results about the action of $A_\Gamma$ on $D_\Gamma$, and in particular the description of the minsets and fixed-point sets in $D_\Gamma$ obtained in \cite{martin2022acylindrical}, carry over to the cubical setting without any change: 

\begin{lem}[\cite{martin2022acylindrical}]
    Let $A_\Gamma$ be a triangle-free Artin group. Let $g \in A_\Gamma$ be an element acting elliptically on the cubical Deligne complex $C_\Gamma$. 
    \begin{itemize}
        \item If $g$ is conjugated to a power of a standard generator, then the fixed-point set of $g$ is a tree contained in the $1$-skeleton of $C_\Gamma$, called a \emph{standard tree}. 
        \item Otherwise, the fixed-point set of $g$ is a single vertex of $C_\Gamma$, whose stabiliser is conjugated to a dihedral standard parabolic subgroup of $A_\Gamma$. 
    \end{itemize}
\end{lem}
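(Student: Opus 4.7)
The plan is to reduce the statement to the analogous description of fixed-point sets in the simplicial Deligne complex $D_\Gamma$, which is established in \cite{martin2022acylindrical}, and then to transfer the conclusion to $C_\Gamma$ via the subdivision relationship between the two complexes recalled immediately before the lemma.

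First, I would observe that since the action of $A_\Gamma$ on both $D_\Gamma$ and $C_\Gamma$ is without inversions (as stated in the definition of the Deligne complexes), the fixed-point set of $g$ in either complex is a subcomplex that is determined entirely by its vertex set. Because $D_\Gamma$ and $C_\Gamma$ share the same vertex set, namely the set of cosets of spherical standard parabolic subgroups, the sets of vertices fixed by $g$ in the two complexes coincide. Moreover, a cube of $C_\Gamma$ is fixed by $g$ if and only if all of its vertices are fixed, if and only if the corresponding simplicial subdivision of that cube lies in $\operatorname{Fix}_{D_\Gamma}(g)$. Hence $\operatorname{Fix}_{C_\Gamma}(g)$ and $\operatorname{Fix}_{D_\Gamma}(g)$ agree as subsets of the common underlying space, differing only in their cellulation.

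Second, I would invoke Martin's description of $\operatorname{Fix}_{D_\Gamma}(g)$ from \cite{martin2022acylindrical}: either $g$ is conjugated to a power of a standard generator and $\operatorname{Fix}_{D_\Gamma}(g)$ is a standard tree; or $\operatorname{Fix}_{D_\Gamma}(g)$ reduces to a single vertex whose stabiliser is conjugated to a dihedral standard parabolic subgroup. The second alternative transfers immediately to $C_\Gamma$, since the unique fixed vertex, together with its stabiliser, is the same object in both complexes.

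The main remaining technical point, which is the part I expect to require the most care, is to verify in the first alternative that the standard tree obtained from $D_\Gamma$ lies in the $1$-skeleton of $C_\Gamma$ and not merely in that of $D_\Gamma$; equivalently, that its edges are genuine $1$-cubes rather than diagonals of higher-dimensional cubes. Edges of a standard tree for a standard generator $s$ connect pairs of vertices of the form $hA_X$ and $hA_{X \cup \{s\}}$ with $s \notin X$ and $A_{X \cup \{s\}}$ spherical. Since $|(X \cup \{s\}) \setminus X| = 1$, such intervals of cosets correspond exactly to $1$-cubes in $C_\Gamma$ by its definition. Hence the standard tree sits in the $1$-skeleton of $C_\Gamma$, and the description carries over without change.
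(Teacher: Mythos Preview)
Your approach is exactly what the paper does: it states this lemma without proof, having just remarked that the simplicial Deligne complex is a subdivision of the cubical one, so that the description of fixed-point sets from \cite{martin2022acylindrical} carries over unchanged. Your added verification that standard-tree edges are genuine $1$-cubes (because fixed vertices have rank $1$ or $2$, hence adjacent fixed vertices differ in rank by exactly $1$) is a welcome detail that the paper leaves implicit.
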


\begin{lem} Let $A_\Gamma$ be a triangle-free Artin group. Let $g \in A_\Gamma$ be an element acting elliptically on $C_\Gamma$. Then  $\mathrm{Min}(g) = \mathrm{Min}(g^k)$ for every $k \geq 1$ and $g\in A_\Gamma$.
\end{lem}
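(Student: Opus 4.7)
The plan is to exploit that stabilisers of cubes in $C_\Gamma$ are (conjugates of) spherical standard parabolic subgroups of $A_\Gamma$, together with Theorem~\ref{thm:parabolics_closed_roots} (closure under taking roots for two-dimensional Artin groups), in order to pass from ``$g^k$ fixes a point'' to ``$g$ fixes the same point''.

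Since $g$ acts elliptically on the CAT(0) complex $C_\Gamma$, so does every power $g^k$, and the minset of an elliptic isometry of a CAT(0) space coincides with its fixed-point set. The statement therefore reduces to proving $\mathrm{Fix}(g) = \mathrm{Fix}(g^k)$, of which the inclusion $\mathrm{Fix}(g) \subseteq \mathrm{Fix}(g^k)$ is immediate.

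For the reverse inclusion, let $x \in \mathrm{Fix}(g^k)$ and let $\sigma$ be the open cube of $C_\Gamma$ containing $x$ in its interior. Because the $A_\Gamma$--action on $C_\Gamma$ is without inversion (as built into the definition of $C_\Gamma$), the element $g^k$ stabilises, hence fixes pointwise, the cube $\sigma$; in particular it fixes every vertex $v$ of $\sigma$. The stabiliser of such a vertex is, by construction, a conjugate of a spherical standard parabolic subgroup of $A_\Gamma$. Since a triangle-free Artin group is automatically two-dimensional, Theorem~\ref{thm:parabolics_closed_roots} applies and parabolic subgroups of $A_\Gamma$ are closed under taking roots. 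Hence $g$ lies in the stabiliser of every vertex of $\sigma$, and applying the without-inversion property once more gives that $g$ fixes $\sigma$ pointwise, so in particular $g$ fixes $x$.

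The main work is therefore not in the lemma itself but already encapsulated in the root-closure theorem it invokes: once that is in hand, everything else is formal, and neither the finer information about fixed-point sets from the preceding lemma, nor the $(2,2)$-free hypothesis used elsewhere in this section, is required here.
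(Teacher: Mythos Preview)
Your proof is correct and follows essentially the same route as the paper's own argument: both reduce the claim to the fact that stabilisers of points of $C_\Gamma$ are parabolic subgroups and then invoke Theorem~\ref{thm:parabolics_closed_roots} on closure under roots. You have simply spelled out in more detail the reduction from $\mathrm{Min}$ to $\mathrm{Fix}$ and the passage from an arbitrary fixed point to vertex stabilisers via the without-inversion property, whereas the paper leaves these steps implicit.
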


\begin{proof}
    Stabilisers of points of $C_\Gamma$ are parabolic subgroups of $A_\Gamma$ by construction. Since parabolic subgroups of two-dimensional Artin groups are closed under taking roots by  Theorem~\ref{thm:parabolics_closed_roots}, the result follows immediately.
\end{proof}

\begin{proof}[Proof of Theorem~\ref{thm:Artin_uniform_growth}]
Let $s, t$ be two elements of $S$, and let us first assume that $s, t$ belong to a dihedral parabolic subgroup $H$ with label $m_{ab}$. Since $H$ contains an index $m_{ab}'$ subgroup that is of the form $\mathbb{Z}\times F_k$ for some $k \geq 1$, it follows that $s^{m_{ab}'}$ and  $t^{m_{ab}'}$ either commute or a generate a free subgroup. Otherwise, $s, t$ do not belong to a dihedral parabolic subgroup, and it follows from Theorem~\ref{thm:rel wpa artin} that $s^3, t^3$ either commute or generate a free subgroup. In any case, it follows that $s^m$ and $t^m$ either commute or generate a free subgroup. 
If for one pair of elements $s, t \in S$, we have that $s^m$ and $t^m$ generate a free subgroup, we are done. So, let us assume by contradiction that for every pair $s, t$ of distinct elements of $S$, the elements $s^m$ and $t^m$ commute. 

First, suppose that some element $s_0\in S$ acts loxodromically on $C_\Gamma$, hence on $D_\Gamma$, which we think of as endowed with its \cat{-1} metric. In particular, $s_0$ admits a unique axis $L\subset D_\Gamma$. Let $s\in S$ with $s\neq s_0$. Since $s^m$ and $s_0^m$ commute, it follows that either $s$ is loxodromic with the same axis $L$ as $s_0$, or $s$ is elliptic with fixed-point set $\mathrm{Min}(s) = \mathrm{Min}(s^m)$ a standard tree containing $L$. Note in particular that there exists a unique standard tree of $D_\Gamma$ containing $L$  by \cite{hagen2024extra}*{Corollary 2.18} (Note that this result, and the others from \cite{hagen2024extra} used in this proof, are stated for large-type Artin groups, but the reader can check that the proof holds more generally for two-dimensional Artin groups as their proof only use the fact that the Deligne complex admits a \cat 0 metric). Thus, either all elements of $S$ act loxodromically on $D_\Gamma$ with axis $L$, or there exists a unique standard tree $T$ such that the minset of every element of $S$ is contained in $T$. In the former case, we get that every $s\in S$, hence $\langle S \rangle = A_\Gamma$, is contained in  $\stab {A_\Gamma}{L}$. As the pointwise stabiliser of $L$ is either infinite cyclic (if it is contained in a standard tree) or trivial (otherwise) by \cite{hagen2024extra}*{Corollary 2.17}, it follows that $A_\Gamma$ is virtually abelian, a contradiction. In the latter case, it follows from \cite{hagen2024extra}*{Lemma 2.15} that every element of $S$, and hence $\langle S \rangle = A_\Gamma$, is contained in $\stab {A_\Gamma}{T}$, which is of the form $\mathbb{Z}\times F_k$ by \cite{martin2022acylindrical}*{Lemma 4.5}. Since $A_\Gamma$ is not abelian, it follows that some pair of elements of $S$ generates a non-abelian free subgroup, contradicting our assumption.

Now suppose that all elements $s\in S$ act elliptically on $C_\Gamma$. This  implies that for every pair, the minsets $\mathrm{Min}(s^m) = \mathrm{Min}(s)$ and $\mathrm{Min}(t^m)=\mathrm{Min}(t)$ have a non-empty intersection, otherwise some large powers would generate a free subgroup by \cite{martin2023tits}*{Proposition C}. 
Since the fixed-point sets $\mathrm{Min}(s)$ are convex subcomplexes of $C_\Gamma$ and pairwise intersect, the Helly property for CAT(0) cube complexes \cite{roller98habilitation}*{Theorem 2.2} implies that there exists a vertex of $D_\Gamma$ fixed by all $s\in S$, hence by $\langle S \rangle= A_\Gamma$. Thus, $A_\Gamma$ is a dihedral Artin group, a contradiction.
\end{proof}

\section{Examples, additional applications, and questions}
\label{sec:recovering}

We now define a class of groups, starting with hyperbolic groups and closed under several operations, such that all groups in the class satisfy the power alternative (uniformly).  The goal is to recover several existing examples in a unified way; one could probably define such a class of groups more generally, in terms of acylindrical actions on hyperbolic complexes such that the stabilisers of simplices are in the class and satisfy suitable ``rotation'' conditions along the lines of \cite{DahmaniGuirardelOsin} or \cite{BHMS}.  However, in the interest of concreteness, we content ourselves with a less general approach sufficient for our favourite examples.

We refer the reader to, for instance, \cite{Bowditch:relhyp} for the definition of \emph{relative hyperbolicity} of a pair $(G,\mathcal P)$, where $\mathcal P$ is a set of subgroups of the finitely generated group $G$.  We also recall:

\begin{defi}[Acylindricity]\label{defn:acylindricity}
Given a metric space $X$ and a group $G$, an action $G\to \operatorname{Isom}(X)$ is \emph{acylindrical} if for each $\epsilon\geq 0$ there exist $R(\epsilon),N(\epsilon)<\infty$ such that $$|\{g\in G:d_X(x,gx)\leq\epsilon\ \text{and } d_X(y,gy)\leq\epsilon\}|\leq N(\epsilon)$$ for all $x,y\in X$ with $d_X(x,y)>R(\epsilon)$.  The functions $R$ and $N$ are the \emph{acylindricity parameters}. 
\end{defi}

We are usually interested in the case where $X$ is a hyperbolic geodesic space; in this setting the notion was defined in \cite{bowditch2008tight}.  In the case where $X$ is a simplicial tree, the notion of acylindricity of the $G$--action was formulated earlier, and equivalently, by Sela in \cite{sela1997acylindrical}: there exist $K,c\in\integers_{\geq0}$ such that if $v,w\in \Ver X$ satisfy $d_X(v,w)>K$, then $|\stab{G}{v}\cap \stab{G}{w}|\leq c$.

\begin{rem}\label{rem:power alternative function}
To facilitate quantitative statements about the power alternative, we introduce some notation.  Let $G$ be a group.  The \emph{power alternative function} $\paf_G:G^2\to \N_{\ge1}\cup\{\infty\}$ is defined as follows.  Given $g,h\in G$, let $\paf_G(g,h)$ be the smallest positive integer $n$ such that either $[g^n,h^n]=1$ or $\Span{g^n,h^n}\cong F_2$.  If no such $n$ exists, then $\paf_G(g,h)=\infty$.  Hence $G$ satisfies the power alternative if and only if $\paf_G(g,h)<\infty$ for all $g,h\in G$.  We also let $\paf(G)=\sup_{g,h\in G}\paf_G(g,h)$.  So, $G$ satisfies the uniform power alternative if $\paf(G)<\infty$.  Finally, if $\mathcal P$ is a family of groups, we let $\paf(\mathcal P)=\sup_{P\in\mathcal P}\paf(P)$.
\end{rem}

\begin{defi}[the classes $\paclass$ and $\upaclass$]\label{defn:WPA-class}
We define classes $\paclass$ and $\upaclass$ of groups as follows.  First, let $\upaclass_0$ be the class of finitely generated groups $G$ that fit into an exact sequence $Z\hookrightarrow G\twoheadrightarrow H$ where $H$ is a hyperbolic group and $Z$ is free abelian of rank at most $1$ that is central in $G$.  Define $\upaclass$ to be the smallest class of finitely generated groups that satisfies all of the following closure properties:
\begin{enumerate}
    \item $\upaclass_0\subset\upaclass$.
    \item If $G\in\upaclass$ and $[G':G]<\infty$ then $G'\in\upaclass$.
    \item If $G,G'\in\upaclass$ then $G\times G'\in\upaclass$.
    \item Let $G$ admit a cocompact  acylindrical action on a tree $T$ such that all vertex stabilisers belong to $\upaclass$ and are closed under roots, i.e. for $g\in G$ and $v\in T$, if $g^nv=v$ for some $n\neq 0$, then $gv=v$. Then $G\in\upaclass$.\label{item:tree-comb}
    \item Let $\Gamma$ be a finite simplicial graph, for each $v\in\Ver{\Gamma}$ let $G_v$ be a nontrivial group, and let $G$ be the graph product of $\{G_v:v\in \Gamma\}$.  If each $G_v\in \upaclass$, then $G\in\upaclass$.
    \item If $G$ is hyperbolic relative to a finite collection $\mathcal P$ of subgroups and each $H\in\mathcal P$ belongs to $\upaclass$, then $G\in\upaclass$.
\end{enumerate}
The larger class $\paclass$ is defined identically, replacing $\upaclass$ with $\paclass$, except condition \eqref{item:tree-comb} changes to:
\begin{enumerate}[(i)]
\setcounter{enumi}{3}
    \item $G$ acts on a tree $T$ with the \goodname\ property, and all stabilisers of points in $\Ver{T}\cup\partial T$ belong to $\paclass$.\label{item:tree-comb-prime}
\end{enumerate}
\end{defi}

The point of Definition \ref{defn:WPA-class} is just to enable a succinct statement of the following:

\begin{thm}[omnibus combination theorem]\label{thm:meta-pa}
If $G\in\paclass$, then $G$ satisfies the power alternative.
If $G\in\upaclass$, then $G$ satisfies the uniform power alternative.
\end{thm}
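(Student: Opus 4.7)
The plan is to induct on the construction of $G$ from the base class $\upaclass_0$ according to the closure properties in Definition~\ref{defn:WPA-class}, proving both statements in parallel. In the $\upaclass$ case I will carry along an effective bound on $\paf(G)$; in the $\paclass$ case only the pointwise finiteness of $\paf_G(g,h)$ must be verified.

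For the base case, hyperbolic groups satisfy the uniform power alternative by Theorem~\ref{thm:hyperbolic}. For $G\in\upaclass_0$ a central-$\Z$ extension of a hyperbolic group $H=G/Z$, let $N_H$ be a uniform exponent for $H$ and, for $g,h\in G$, consider their images $\bar g,\bar h\in H$. If $\langle \bar g^{N_H},\bar h^{N_H}\rangle\cong F_2$, then $\langle g^{N_H},h^{N_H}\rangle$ sits in a central extension of $F_2$ by a subgroup of $\Z$; since $F_2$ has cohomological dimension one, this extension splits, and a direct computation in $F_2\times\Z$ shows that $g^{N_H}$ and $h^{N_H}$ still freely generate a free subgroup. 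If $[\bar g^{N_H},\bar h^{N_H}]=1$, then the subgroup they generate in $H$ is abelian, hence virtually cyclic; since hyperbolic groups have bounded torsion, a uniformly bounded further power places them in a common cyclic subgroup of $H$, whose preimages in $G$ commute because $Z$ is central.

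Next I will verify that each closure operation preserves the (uniform) power alternative. Passage to finite-index subgroups and overgroups is Lemma~\ref{lem:wpa virtual property}, and direct products are Lemma~\ref{lem:wpa stable direct product}. The tree-combination step for $\paclass$ is immediate from Theorem~\ref{thmintro:WPA for actions on trees}. For $\upaclass$, a cocompact acylindrical action on a tree satisfies the stabilisation property (Example~\ref{ex:acylindrical}), and its boundary-point stabilisers are virtually cyclic as in the proof of Corollary~\ref{cor:acyl}, hence in $\upaclass_0$; an effective version of the ping-pong arguments in Lemmas~\ref{lem:mixed-bounded}--\ref{lem:lox-lox}, combined with the closed-under-roots hypothesis, yields a uniform exponent in terms of the vertex-stabiliser exponents and the acylindricity parameters. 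Graph products are handled by Antol\'in--Minasyan~\cite[Corollary~1.5]{antolin2015tits} in the non-uniform case, and by Remark~\ref{rem:graph-prod} combined with Lemma~\ref{lem:wpa stable direct product} in the uniform case. Finally, the relatively hyperbolic closure is exactly Theorem~\ref{thmintro:rel-hyp}.

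The hard part will be maintaining uniformity of exponents together with the bounded-torsion hypothesis throughout the induction. Theorem~\ref{thmintro:rel-hyp} requires bounded torsion in each peripheral in order to yield a uniform exponent for $G$, so I must check that ``bounded torsion'' is preserved by each closure operation; this is routine for finite-index subgroups, direct and graph products, and acylindrical tree combinations whose vertex stabilisers have bounded torsion, and for relatively hyperbolic overgroups it follows from the bounded torsion of the peripherals together with the acylindricity of the action on the coned-off Cayley graph. A secondary subtlety is the acylindrical tree case, where one must carefully combine the uniform exponents of the elliptic subgroups with a ping-pong bound coming from the acylindricity constants, in order to obtain a single uniform exponent for the whole group. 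Once these two effectivity points are settled, the structural induction on $\paclass$ and $\upaclass$ closes and yields both statements of the theorem.
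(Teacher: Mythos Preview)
Your argument follows the same structural induction as the paper's proof: verify the base class $\upaclass_0$ directly, then check each closure operation in Definition~\ref{defn:WPA-class} using the corresponding auxiliary result, while carrying the bounded-torsion hypothesis along in the uniform case. That is exactly what the paper does, and your treatment of the base case, finite-index, direct products, the $\paclass$ tree step, and relative hyperbolicity matches the paper's (your central-extension argument is a little more elaborate than Lemma~\ref{lem:central-extension}, which simply observes that a two-generated group surjecting onto $F_2$ is $F_2$, and that the preimage of a cyclic subgroup is abelian).

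Two small points where your sketch diverges from the paper. For the uniform acylindrical-tree step, the paper does not try to make Lemmas~\ref{lem:mixed-bounded}--\ref{lem:lox-lox} effective; instead it invokes Proposition~\ref{prop:acyl-tree-action}, which is proved via the acylindrical ping-pong Lemmas~\ref{lem:loxo-loxodromic} and~\ref{lem:elliptic-loxodromic} on general hyperbolic spaces. Your description is morally the same program, but pointing to the tree lemmas is misleading since those proofs give exponents depending on the particular pair $(g,h)$. For graph products, the paper simply cites Antol\'in--Minasyan \cite[Corollary~1.5]{antolin2015tits} for both the uniform and non-uniform statements; your alternative route via Remark~\ref{rem:graph-prod} together with Lemma~\ref{lem:wpa stable direct product} only yields the non-uniform alternative as stated, so you should either cite Antol\'in--Minasyan as the paper does or supply the extra uniformity argument.
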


\begin{proof}
Theorem \ref{thm:hyperbolic} and Lemma \ref{lem:central-extension} below show that $G\in \upaclass_0$ implies $\paf(G)<\infty$.  Lemma \ref{lem:wpa virtual property} and Lemma \ref{lem:wpa stable direct product} show that satisfying the [uniform] power alternative is stable under passing to finite index supergroups and taking direct products.  Theorem \ref{thmintro:WPA for actions on trees} implies that groups $G$ as in condition \eqref{item:tree-comb-prime} satisfy the non-uniform power alternative.

If $G$ is hyperbolic relative to a finite collection of groups $H$ such that $\paf(H)<\infty$, then $\paf(G)<\infty$ by Theorem \ref{thm:rel-hyp}.  In order to apply the theorem, we need to know that there is a bound on the orders of torsion elements of $G$, but this property holds for hyperbolic groups and persists under all of the operations in Definition \ref{defn:WPA-class}.

Now, suppose $G$ acts acylindrically and cocompactly on a tree $T$,  acylindrically and cocompactly, and the vertex stabilisers $G_v$ are all closed under roots and satisfy $\paf(G_v)<\infty$.  Then Proposition \ref{prop:acyl-tree-action} below implies $\paf(G)<\infty$ (with the corresponding observation about torsion).

The statement about graph products follows from \cite{antolin2015tits}*{Corollary 1.5}.  This concludes the proof.
\end{proof}

It remains to prove the results from the proof of Theorem \ref{thm:meta-pa}, which we do in the next few subsections.

\subsection{Central extensions of hyperbolic groups}\label{subsec:central-extensions}

The first result is well-known and can be recovered from arguments in \cite{Gromov}, but we will recover it as a special case of a more general statement:

\begin{thm}
    \label{thm:hyperbolic}
    Let $G$ be a group equipped with a finite generating set such that the resulting word metric on $G$ is $\delta$--hyperbolic.  Then there exists a natural number $n(G)$ such that $\paf(G)\leq n(G)$.  More precisely, there exists a positive integer $n=n(G)$ such that for all $g,h\in G$ of infinite order:
    \begin{enumerate}
        \item if $g$ and $h$ share a limit point in $\partial G$, then both their limit points coincide and $\Span{g,h}_G$ contains a finite index $\Z$ subgroup containing $g^n$ and $h^n$, and otherwise,
        \item if $g$ and $h$ do not share any points in $\partial G$, then $\Span{g^n,h^n}_G\cong F_2$.
    \end{enumerate}
\end{thm}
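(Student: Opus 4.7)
The plan is to analyse the dynamics of $G$ on its Cayley graph $X$ and on its Gromov boundary $\partial G$. Each infinite-order element $g\in G$ acts loxodromically on $X$, with two fixed points $g^{\pm\infty}\in\partial G$, and the \emph{elementary closure} $E(g) \coloneqq \stab{G}{\{g^{+\infty},g^{-\infty}\}}$ is virtually infinite cyclic and contains $\Span{g}_G$ with finite index. First I would combine two classical uniformity facts for hyperbolic groups: bounded torsion, providing a constant $T=T(G)$ that bounds the order of every finite subgroup of $G$, and Delzant's uniform positive lower bound $\tau_0=\tau_0(G)$ on the stable translation length of every loxodromic element of $G$. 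From these one extracts a uniform integer $K=K(G)$ such that for every infinite-order $g\in G$ there exists a ``primitive root'' $g_0\in E(g)$ with $\Span{g}_G\le\Span{g_0}_G$ and $[E(g):\Span{g_0}_G]\le K$. Setting $N_1 \coloneqq K!\cdot T!$ then ensures that $g^{N_1}\in\Span{g_0}_G$ for every infinite-order $g\in G$.

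Next, I would prove that for infinite-order $g,h\in G$, the three conditions (a) $\{g^{\pm\infty}\}\cap\{h^{\pm\infty}\}\neq\varnothing$, (b) $\{g^{\pm\infty}\}=\{h^{\pm\infty}\}$, and (c) $E(g)=E(h)$ are equivalent. The key implication (a)$\Rightarrow$(b) uses the north-south dynamics of $g$ on $\partial G$: any element of $G$ fixing a single endpoint of $g$ must, after being replaced by a uniformly bounded power to kill the torsion, centralise $g_0$, and hence preserves the whole pair $\{g^{+\infty},g^{-\infty}\}$. Once $E(g)=E(h)$, both $g^{N_1}$ and $h^{N_1}$ lie in the common infinite cyclic subgroup $\Span{g_0}_G = \Span{h_0}_G$, so $\Span{g,h}_G$ embeds into the virtually cyclic group $E(g)$, which contains a $\Z$ of index at most $K$ containing $g^{N_1}$ and $h^{N_1}$. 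This proves assertion~(1).

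For assertion~(2), with disjoint limit sets, the plan is to apply the ping-pong lemma on $\partial G$ equipped with its visual topology. Choose four open neighbourhoods of $g^{\pm\infty}$ and $h^{\pm\infty}$ whose closures are pairwise disjoint. Uniform north-south dynamics on $\partial G$, which follows from Delzant's translation length bound together with uniform thinness of geodesic triangles, yields an integer $N_2=N_2(G)$ such that, for every infinite-order $\gamma\in G$, the element $\gamma^{N_2}$ sends the complement of a fixed small neighbourhood of $\gamma^{-\infty}$ into a fixed small neighbourhood of $\gamma^{+\infty}$. The ping-pong lemma then gives $\Span{g^{N_2},h^{N_2}}_G\cong F_2$. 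Taking $n(G)\coloneqq\operatorname{lcm}(N_1,N_2)$ handles both assertions simultaneously.

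The main obstacle throughout is the uniformity of $n(G)$: each of the pointwise statements is classical, but turning ``some power works for this particular pair $g,h$'' into ``a single power works for every pair'' requires both the uniform positive lower bound on translation lengths and the uniform bound on torsion in $G$. These are precisely the two inputs that fail in the broader acylindrically hyperbolic setting, which is why Theorem~\ref{thmintro:rel-hyp} must impose additional hypotheses on the peripheral subgroups to recover a uniform exponent.
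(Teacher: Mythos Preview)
The paper's own proof is a one-line reduction: Theorem~\ref{thm:hyperbolic} is the special case of Theorem~\ref{thm:rel-hyp} with $\mathcal P=\{\{1\}\}$, and the actual work is done in Lemmas~\ref{lem:acyl-proj-bound}, \ref{lem:acyl-behrstock} and~\ref{lem:loxo-loxodromic}, which run a ping-pong argument \emph{in the Cayley graph} using coarse closest-point projections to quasi-axes.

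Your approach is genuinely different, and your treatment of assertion~(1) is essentially fine (the cleanest justification of (a)$\Rightarrow$(b) is simply that $\stab{G}{\xi}$ is virtually cyclic for every $\xi\in\partial G$, so a loxodromic $h$ fixing $g^{+\infty}$ lies in $E(g)$ and hence shares both limit points). The problem is in assertion~(2), precisely at the point you identify as ``the main obstacle''. You assert that uniform north-south dynamics provides $N_2=N_2(G)$ such that $\gamma^{N_2}$ sends the complement of a \emph{fixed small} neighbourhood of $\gamma^{-\infty}$ into a \emph{fixed small} neighbourhood of $\gamma^{+\infty}$. If ``fixed small'' means ``of some visual radius $\varepsilon=\varepsilon(G)$ independent of $\gamma$'', that statement is true but useless for ping-pong: the four points $g^{\pm\infty},h^{\pm\infty}$ can lie pairwise within visual distance far smaller than $\varepsilon$ (in $F_2=\langle a,b\rangle$, take $g=a^nb$ and $h=a^nb^{-1}$ with $n$ large), so your four ping-pong neighbourhoods cannot be taken of uniform size. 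If instead the neighbourhoods are the four disjoint ones you chose at the start, their radii depend on $(g,h)$, and you give no reason why the required exponent does not blow up as those radii shrink.

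What actually makes the exponent uniform is a cancellation that is invisible in your boundary picture: the diameter of the projection $\pi_h(A(g))$ is at most $C_1\min\{\tau(g),\tau(h)\}+C_1$ (Lemma~\ref{lem:acyl-proj-bound}), while $h$ displaces points along $A(h)$ by at least a uniform fraction of $\tau(h)$ per iterate. These two quantities scale together with $\tau(h)$, so the number of iterates of $h$ needed to push $\pi_h(A(g))$ off itself by the required margin is bounded independently of $g,h$; this is exactly Lemma~\ref{lem:loxo-loxodromic}. Your boundary argument can be salvaged, but only by importing this projection estimate (or an equivalent), at which point you are essentially reproducing the paper's proof in different coordinates.
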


\begin{proof}
This is the special case of Theorem \ref{thm:rel-hyp} where $\mathcal P=\{\{1\}\}$.
\end{proof}

\begin{lem}\label{lem:central-extension}
Let $H$ be a hyperbolic group and let 
$$1\to\Z\to G\to H\to 1$$
be an extension.  Then $G$ satisfies the uniform power alternative.
\end{lem}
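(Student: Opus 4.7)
The plan is to transfer the uniform power alternative from $H$ to $G$ by pushing elements along the quotient map $\pi\colon G\to H$ and lifting the conclusions back, exploiting the centrality of the kernel. As a preliminary reduction, I would note that the conjugation action of $G$ on $\Z$ factors through $\operatorname{Aut}(\Z)\cong\Z/2\Z$, so an index at most $2$ subgroup $G_0\le G$ has its copy of $\Z$ central, and the corresponding quotient $H_0\le H$ is still hyperbolic. Since the uniform power alternative is stable under passing to finite-index overgroups by Lemma~\ref{lem:wpa virtual property}, it suffices to treat the case where $\Z$ is central in $G$.

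Let $n=n(H)$ be the uniform exponent for $H$ from Theorem~\ref{thm:hyperbolic}, let $T$ be the (finite) bound on the orders of torsion elements in the hyperbolic group $H$, and set $N=nT$. Given $g,h\in G$ with images $\bar g,\bar h\in H$, I would split into three cases. If one of $\bar g,\bar h$ has finite order, say $\bar g^T=1$, then $g^T$ lies in the central subgroup $\Z$, so $[g^N,h^N]=1$ at once. If both $\bar g,\bar h$ have infinite order but share no limit point in $\partial H$, Theorem~\ref{thm:hyperbolic} yields $\langle\bar g^n,\bar h^n\rangle_H\cong F_2$; since $\pi$ maps any freely reduced word in $g^n,h^n$ to the corresponding word in $\bar g^n,\bar h^n$, which is nontrivial in $F_2$, the subgroup $\langle g^n,h^n\rangle_G$ is free of rank two, and the same conclusion persists for $N$th powers. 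Finally, if $\bar g,\bar h$ have infinite order and share a limit point, Theorem~\ref{thm:hyperbolic} places both $\bar g^n$ and $\bar h^n$ inside a cyclic subgroup $\langle c\rangle\le H$; choosing any lift $e\in G$ of $c$, we can write $g^n=e^p z_1$ and $h^n=e^q z_2$ with $z_1,z_2\in\Z$ central, and centrality then yields $[g^n,h^n]=[e^p,e^q]=1$, whence $[g^N,h^N]=1$.

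Since $N$ depends only on $H$, this proves the uniform power alternative for $G$. I do not foresee a serious obstruction in any of the steps; the critical observation is the last case, where centrality of $\Z$ is used decisively to promote commutativity in the cyclic subgroup $\langle c\rangle\le H$ up to commutativity in $G$ via a common lift $e$. Theorem~\ref{thm:hyperbolic} is essential here because it provides a genuine infinite cyclic subgroup of $H$ containing $\bar g^n$ and $\bar h^n$, not merely a virtually cyclic one; this is precisely what allows the lifts to remain in an abelian subgroup of $G$ modulo the central kernel and avoids Heisenberg-type obstructions to commutativity.
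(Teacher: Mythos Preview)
Your proof is correct and follows essentially the same approach as the paper's: push to $H$ via the quotient map, invoke Theorem~\ref{thm:hyperbolic} there, lift the free case directly, and use centrality of $\Z$ to handle the cyclic case; the paper simply absorbs your index-$2$ reduction into an extra factor of $2$ in the exponent. One minor wording issue: a \emph{bound} $T$ on the orders of torsion elements does not by itself give $\bar g^T=1$; take $T$ to be the lcm of the finitely many torsion orders in $H$ (or $T!$) instead.
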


\begin{proof}
Let $g,h\in G$ and let $\bar g,\bar h$ denote their images in $H$.  Choose $n\geq 1$ such that $\langle \bar g^n,\bar h^n\rangle_H$ is either nonabelian free or cyclic, using Theorem \ref{thm:hyperbolic}.  In the former case, $\langle g^n,h^n\rangle_G$ is again free; in the latter, $\langle g^{2n}, h^{2n}\rangle_G$ abelian since the preimage of $\langle \bar g^{2n},\bar h^{2n}\rangle_H$ in $G$ is $\integers^2$ or $\integers$.
\end{proof}

\subsection{Actions on hyperbolic spaces}
In this section, we prove the following two results, whose proofs we postpone until after some lemmas.

\begin{notation}
    In order to lighten the notations, in the rest of this section we will simply write as $G_v$ the stabiliser $\stab G v$ of a vertex $v$ of $T$.
\end{notation}

\begin{prop}\label{prop:acyl-tree-action}
Let $G$ act acylindrically and cocompactly on a tree $T$.  Suppose that $\paf(G_v)<\infty$ and $G_v$ is closed under roots, for each $v\in\Ver T$. Then $\paf(G)<\infty$ and $G$ has a bound on the orders of torsion elements.
\end{prop}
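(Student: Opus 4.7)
The plan is to upgrade the non-uniform version (Theorem \ref{thm:wpa for action on real tree}) to a uniform statement by exploiting acylindricity and cocompactness to uniformise every constant appearing in its proof. The bound on torsion is immediate from the root-closure hypothesis: if $g \in G$ is torsion with $g^n=1$ for some $n\neq 0$, then $g^nv=v$ for every vertex $v$, so root-closure forces $gv=v$; hence torsion in $G$ acts trivially on $T$, lies in $\bigcap_v G_v$, and is bounded by the uniform torsion bound on the vertex stabilisers (implicit in the $\upaclass$ hypothesis, as noted in the proof of Theorem~\ref{thm:meta-pa}).

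For the uniform power alternative, the key preparatory step will be to show that every boundary stabiliser $\stab{G}{\xi}$ ($\xi\in\partial T$) is uniformly virtually cyclic. Write $K$ and $c$ for acylindricity parameters in Sela's form: pointwise stabilisers of tree segments of length greater than $K$ have order at most $c$. By Lemma~\ref{lem:ses for utl}, there is a homomorphism $\operatorname{utl}_\xi \colon \stab{G}{\xi}\to \Z$; by Lemma~\ref{lem:ker of utl is a union} its kernel equals a nested increasing union of pointwise stabilisers of geodesic rays converging to $\xi$. Each of these stabilisers has order at most $c$ (a ray is in particular a segment of length greater than $K$), and a nested union of finite groups of bounded order must eventually stabilise, so the kernel itself has order at most $c$. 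Hence $\stab{G}{\xi}$ is virtually cyclic with uniformly bounded finite part and satisfies the uniform power alternative with some exponent $M_\infty$ depending only on $c$. Cocompactness likewise yields a uniform vertex-stabiliser exponent $M := \max_v \paf(G_v) < \infty$.

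Given $g,h\in G$, I then apply Proposition~\ref{prop:free-or-common-stabiliser} (valid since acylindricity implies the stabilisation property, Example~\ref{ex:acylindrical}) and revisit the case analysis of Lemmas~\ref{lem:elliptic-elliptic general case}, \ref{lem:mixed-bounded}, \ref{lem:lox-lox}, and \ref{lem:mixed-unbounded} to extract uniform bounds on the power $n$. Cocompactness on a simplicial tree provides a uniform positive integer lower bound $\tau_0$ for loxodromic translation lengths, and root-closure gives $\stabfix g = \fix g$ for elliptic $g$, so Lemma~\ref{lem:elliptic-elliptic general case} is directly uniform in the elliptic-elliptic case with disjoint fixed-point sets. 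The main technical obstacle, which I expect to handle via a commutator argument, is the loxodromic-loxodromic (or mixed) case with a \emph{long but bounded} overlap of axes or of fixed-set and axis: if the overlap length exceeds a uniform constant $L=L(K,c,\tau_0)$, a suitably chosen commutator of $g$ and $h$ will fix a segment of length greater than $K$ and hence have order at most $c$, and root-closure then forces $g$ and $h$ to share a boundary fixed point, reducing to the $M_\infty$ case. Otherwise the overlap is at most $L$, and a uniform ping-pong with exponent $\lceil L/\tau_0\rceil + 1$ yields $\langle g^n,h^n\rangle \cong F_2$. Taking $N$ to be the least common multiple of $M$, $M_\infty$, and this ping-pong exponent delivers the desired uniform exponent for $G$.
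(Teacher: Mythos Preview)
Your overall architecture---handle torsion via root-closure, make boundary stabilisers uniformly virtually cyclic via acylindricity, then uniformise the case analysis of Section~\ref{sec:tree-action-criterion}---is reasonable, and the torsion bound, the boundary-stabiliser analysis, and the elliptic--elliptic case are all fine. The paper proceeds differently: rather than revisiting the tree-specific Lemmas~\ref{lem:mixed-bounded}--\ref{lem:mixed-unbounded}, it invokes the projection-based Lemmas~\ref{lem:loxo-loxodromic} and~\ref{lem:elliptic-loxodromic} (valid for any acylindrical action on a hyperbolic space) to get a uniform exponent directly whenever at least one of $g,h$ is loxodromic, and only uses root-closure for the elliptic--elliptic case.

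There is, however, a genuine gap in your loxodromic--loxodromic step. Your commutator argument shows that if the overlap $D=\abs{\Axis g\cap \Axis h}$ exceeds $K+\tau(g)+\tau(h)$ then $[g,h]$ fixes a segment of length greater than $K$, hence has order at most $c$, hence (by root-closure) acts trivially, forcing the axes to coincide. Contrapositively, distinct axes give $D\le K+\tau(g)+\tau(h)$. But this threshold is \emph{not} uniform: it depends on $\tau(g)+\tau(h)$, and your claimed $L=L(K,c,\tau_0)$ does not follow. The ping-pong exponent you then need is not $\lceil L/\tau_0\rceil$ but rather $\lceil D/\min(\tau(g),\tau(h))\rceil$, and with $D$ as large as $K+\tau(g)+\tau(h)$ this ratio is unbounded (take $\tau(g)\gg\tau(h)$).

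What is actually needed---and what Lemma~\ref{lem:acyl-proj-bound} supplies---is the sharper bound $D\le C_1\min(\tau(g),\tau(h))+C_1$ (unless the axes coincide). With this in hand, $D/\tau(g)$ and $D/\tau(h)$ are both at most $C_1+C_1/\tau_0$, and a uniform ping-pong exponent follows. Proving this sharper bound requires a pigeonhole over more than one commutator (producing $N(0)+1$ elements that all fix a common long segment), not just $[g,h]$; the simple commutator is too coarse. So either import Lemma~\ref{lem:loxo-loxodromic} wholesale, as the paper does, or replace your commutator step with the stronger overlap estimate.
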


The power alternative for $G$ acting acylindrically on a tree is Corollary \ref{cor:acyl}, but below, we will argue slightly differently to get the uniform version.

In the next theorem, we consider a relatively hyperbolic pair $(G,\mathcal P)$.  Given a finite generating set $S$ of $G$, the \emph{coned-off Cayley graph} $\cay{G,S;\mathcal P}$ is obtained from the Cayley graph $\cay{G,S}$ by coning off each coset $gP,\ P\in\mathcal P,g\in G$; see \cite{Bowditch:relhyp} for details.

\begin{thm}\label{thm:rel-hyp}
Let $G$ be a finitely generated group that is hyperbolic relative to a finite collection $\mathcal P$ of subgroups.  Let $\delta\geq 0$ be such that $G$ admits a finite generating set $S$ such that $\cay{G,S;\mathcal P}$ is $\delta$--hyperbolic. 

Then $G$ satisfies the power alternative provided each $H\in\mathcal P$ does.  Moreover, suppose that there is a bound $B<\infty$ on the orders of finite-order elements of subgroups in $\mathcal P$ and $\paf(\mathcal P)<\infty$. 
Then $\paf(G)$ is bounded in terms of $\paf(\mathcal P),B,\delta$, and the acylindricity parameters of the $G$--action on $\cay{G,S;\mathcal P}$. 

In particular, if $G$ has bounded torsion, then $G$ satisfies the power alternative uniformly if $\mathcal P$ does.
\end{thm}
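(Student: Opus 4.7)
The plan is to exploit the acylindrical action of $G$ on the coned-off Cayley graph $\hat X=\cay{G,S;\mathcal P}$, which is $\delta$-hyperbolic. By standard results in relative hyperbolicity (Osin, Bowditch), this action is acylindrical with parameters controlled by the hypotheses, and every element of $G$ acts on $\hat X$ as an elliptic or loxodromic isometry. An infinite-order element is elliptic if and only if it is \emph{parabolic}, i.e.\ conjugate into some $H\in\mathcal P$, and fixes a cone-vertex of $\hat X$ corresponding to a peripheral coset. Finite-order elements have order bounded in terms of $B$ and the acylindricity data. Given $g,h\in G$, I would argue by cases according to the dynamical types of $g$ and $h$, producing in each case an exponent $n$ depending only on $\delta$, the acylindricity constants, $\paf(\mathcal P)$, and $B$; the uniform exponent $N$ is then the least common multiple of these.

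\emph{Case 1: both $g,h$ loxodromic on $\hat X$.} By the structure theorem for elementary subgroups of acylindrical actions, either $g$ and $h$ share a limit point in $\partial\hat X$, in which case $\langle g,h\rangle$ is virtually cyclic of index bounded by the acylindricity data and uniform powers commute; or their pairs of fixed points at infinity are disjoint, in which case north-south dynamics together with hyperbolicity of $\hat X$ and acylindricity yield a uniform $n$ such that $\langle g^n,h^n\rangle\cong F_2$ by a standard ping-pong argument in a hyperbolic space.

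\emph{Case 2: exactly one of $g,h$ elliptic on $\hat X$.} If the elliptic element is torsion, a uniform power kills it. Otherwise it is parabolic, fixing a single cone-vertex $v$ of $\hat X$ that cannot be fixed by the loxodromic element; its limit set in $\partial\hat X$ is $\{v\}\subset\hat X$, disjoint from the two fixed points of the loxodromic on $\partial\hat X$, and ping-pong again gives a uniform $n$ with $\langle g^n,h^n\rangle\cong F_2$.

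\emph{Case 3: both $g,h$ elliptic.} After killing torsion, we may assume both are parabolic, fixing cone-vertices $v_g,v_h$. If $v_g=v_h$, both lie in a common conjugate of some $H\in\mathcal P$ and the power alternative for $\mathcal P$ supplies an exponent bounded by $\paf(\mathcal P)$. If $v_g\neq v_h$, almost malnormality of peripherals combined with acylindricity forces $G_{v_g}\cap G_{v_h}$ to be finite of bounded order. I would then argue that $\langle g,h\rangle$ contains a loxodromic element (e.g.\ a short word in $g$ and $h$, using Osin's alternative), reducing the analysis to the ping-pong setup of Case~1 applied to a well-chosen pair of loxodromic elements built from $g^n$ and $h^n$, and conclude that for suitable uniform $n$ one has $\langle g^n,h^n\rangle\cong F_2$.

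The main obstacle is Case~3 with $v_g\neq v_h$: while it is standard to produce some loxodromic in $\langle g,h\rangle$, one must show that this can be done with uniform control, and then carry the conclusion from the free subgroup generated by auxiliary loxodromics back to $g^n$ and $h^n$ themselves. Quantifying this ping-pong -- choosing the open sets in $\hat X\cup\partial\hat X$ so that positive powers of $g$ and $h$ act separately on them, with all constants uniform in $\delta$, the acylindricity parameters, $B$, and $\paf(\mathcal P)$ -- is where the technical work lies. The non-uniform statement follows by simply not tracking these constants and invoking Theorem~\ref{thmintro:WPA for actions on trees} in spirit on the hyperbolic space $\hat X$.
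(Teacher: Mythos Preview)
Your overall case structure matches the paper's, and Cases~1 and~2 are handled essentially as the paper does (via Lemmas~\ref{lem:loxo-loxodromic} and~\ref{lem:elliptic-loxodromic}, which are ping-pong arguments on the quasi-axes in $\hat X$). The genuine gap is in Case~3 with $v_g\neq v_h$.

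Your two suggested routes there both have real problems. First, producing a loxodromic in $\langle g,h\rangle$ and ``reducing to Case~1'' does not help: even if you find loxodromics $a,b\in\langle g,h\rangle$ with $\langle a,b\rangle\cong F_2$, this says nothing about whether $g^n$ and $h^n$ themselves freely generate, for any $n$. Second, running ping-pong in $\hat X\cup\partial\hat X$ is not workable here, because $g$ and $h$ are \emph{elliptic} on $\hat X$: the orbit $\langle g\rangle\cdot x$ is bounded in $\hat X$ for every $x$, so there is no displacement to exploit and no north--south dynamics to set up the ping-pong sets.

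The paper's resolution is to abandon $\hat X$ entirely in this case and work instead in the Cayley graph $(G,d)$ with the word metric, using the coarse closest-point projections $\pi_{aP}$ to peripheral cosets from Sisto's work. The key facts are: (i) if $aP_g\neq bP_h$ then $\pi_{aP_g}(bP_h)$ has uniformly bounded $d$--diameter; (ii) a Behrstock-type inequality holds between such projections. Since $g\in aP_ga^{-1}$ has infinite order, iterating $g$ translates points of $aP_g$ arbitrarily far in $d$, so a \emph{uniform} power $g^p$ moves $\pi_{aP_g}(bP_h)$ off itself by a definite amount; similarly for $h$. One then defines ping-pong sets $X_g,X_h\subset G$ via these projections and applies the ping-pong lemma directly to $g^p,h^p$. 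This is exactly the ``quantified ping-pong'' you flag as the obstacle, but the crucial point you are missing is that it must be run in $(G,d)$, not in the coned-off graph, because only there do parabolic elements have unbounded orbits.
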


We now analyse certain two-generated subgroups of an acylindrically hyperbolic group, in order to unify the proofs of the above two statements.  The following is similar to arguments in, for instance, \cite{AbbottDahmani:P-Naive}.

Fix a group $G$ and a $\delta$--hyperbolic graph $X$, whose graph metric we call $d_X$.  Suppose that $G$ acts by acylindrically by isometries on $X$, with $R:\reals_{\geq 0}\to \reals_{\geq0}$ and $N:\reals_{\geq 0}\to\integers_{\geq0}$  as in Definition \ref{defn:acylindricity}.

Given such an action, we say that a quantity is \textit{uniform} if it depends on $\delta$ and the functions $(R,N)$ but not on any particular points in $X$ or elements of $G$.

Given $g\in G$, let 
$$A(g)=\{x\in X:d_X(x,gx)\leq \inf_{y\in X}d_X(y,gy)+10\delta\}.$$

Acylindricity provides a uniform $n_1<\infty$ such that, for all $g\in G$, one of the following holds:
\begin{itemize}
    \item $g$ is loxodromic on $X$.  In this case, $A(g)$ is $n_1$--quasiconvex and $(n_1,n_1)$--quasiisometric, in the subspace metric, to a line.  Moreover, the unique maximal elementary subgroup $E(g)\leq G$ containing $g$ is the stabiliser of $A(g)$. (See \cite[Lemma 6.5]{DahmaniGuirardelOsin}.)

    \item $\langle g\rangle$ has bounded orbits in $X$.  In this case, there exists $x\in X$ such that $\diam_X(\langle g\rangle \cdot x)\leq 5\delta$ (see, for instance, \cite[Lemma III.$\Gamma$.3.3]{bridson2013metric}).  So either $g$ has order at most $N(15\delta)$ or $\diam_X(A(g))\leq R(15\delta)$, and we can assume, by uniformly enlarging $n_1$, that $R(15\delta)\leq n_1$ and that $A(g)$ is $n_1$--quasiconvex.  
\end{itemize}

Now fix $g\in G$.  Observe:

\begin{lem}\label{lem:acyl-order-bound}
If $g$ has order at most $N(15\delta)$, then $\paf_G(g,h)$ is bounded uniformly, for any $h\in G$.
\end{lem}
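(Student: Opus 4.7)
The plan is essentially immediate from the definition of $\paf_G$. Set $M = N(15\delta)$, so by hypothesis the order $k$ of $g$ satisfies $k \leq M$. Since $g^k = 1$, we have $g^k$ commuting with every element of $G$, in particular with $h^k$ for any $h \in G$. Therefore $\paf_G(g,h) \leq k \leq M$.

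To obtain one integer that works uniformly for every $g$ satisfying the hypothesis (so as to meaningfully claim a ``uniform'' bound in the sense of this section), I would set
\[
n_0 = \operatorname{lcm}\{1,2,\ldots,N(15\delta)\}.
\]
Since the order of any such $g$ divides $n_0$, we get $g^{n_0} = 1$, hence $[g^{n_0},h^{n_0}] = 1$ for every $h \in G$, which gives $\paf_G(g,h) \leq n_0$. The quantity $n_0$ depends only on $\delta$ and the acylindricity function $N$, so it is uniform as defined just before the lemma.

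There is no real obstacle here: the statement reduces to the tautology that a trivial element commutes with everything, combined with the finite-order hypothesis ensuring that a bounded power kills $g$. The only minor subtlety is being explicit that ``bounded uniformly'' means bounded by a quantity depending only on $\delta$ and $(R,N)$, which is why one passes from the order $k$ (which varies with $g$) to the universal exponent $n_0$.
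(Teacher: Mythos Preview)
Your proof is correct and matches the paper's treatment: the paper states this lemma as an observation with no proof, since it is immediate from the definition of $\paf_G$ once one notes that $g^k=1$ commutes with everything. One small remark: your first line already gives the uniform bound $\paf_G(g,h)\leq k\leq N(15\delta)$, and $N(15\delta)$ depends only on $\delta$ and the acylindricity function $N$, so it is already uniform in the sense defined in the paper; the extra $\operatorname{lcm}$ step is unnecessary for the lemma as stated (though harmless, and useful if one later wants a single exponent annihilating all such $g$).
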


So assume that $g$ has order more than $N(15\delta)$, and hence $A(g)$ has diameter at most $n_1$ when $g$ has bounded orbits in $X$.  For $g\in G$ loxodromic on $X$, fix $x_0\in X$ and let $\tau(g)=\lim_{n\to\infty}d_X(x_0,g^nx_0)/n$.   Since the action is acylindrical, there is a uniform $\tau_0>0$ such that $\tau(g)\geq \tau_0$ whenever $g$ is loxodromic, as shown in \cite{bowditch2008tight}.

\begin{defi}\label{defn:acyl-projection}
Given $g\in G$, let $\pi_g:X\to 2^{A(g)}$ be the coarse closest-point projection, i.e. for $x\in X$, let 
$$\pi_g(x)=\{y\in A(g):d_X(x,y)\leq d(x,A(g))+1\}.$$
\end{defi}

Since $X$ is $\delta$--hyperbolic and $A(g)$ is $n_1$--quasiconvex, there is a uniform constant $C$ with the following properties that together say $\pi_g$ is a uniformly coarsely lipschitz coarse retraction:
\begin{itemize}
    \item $\diam_X(\pi_g(x))\leq C$ for all $x\in X$.

    \item $d_X(\pi_g(x),\pi_g(y))\leq Cd_X(x,y)+C$ for all $x,y\in X$.

    \item $d_X(a,\pi_g(a))\leq C$ for all $a\in A(g)$.
\end{itemize}
Moreover, if $h\in G$, note that $A(hgh^{-1})=hA(g)$, and for all $x\in X$ we have $h\cdot \pi_g(x)=\pi_{hgh^{-1}}(h\cdot x)$.  (These are standard facts about coarse projection to quasiconvex subspaces of a hyperbolic space.)

\begin{lem}\label{lem:acyl-proj-bound}
There exists a uniform constant $C_1$ such that the following holds.  Let $g,h\in G$ have order at least $N(15\delta)$ and let $\tau=\min\{\tau(g),\tau(h)\}$.  Then all of the following hold:
\begin{enumerate}
    \item $\diam_X(\pi_h(A(ghg^{-1})))=\diam_X(\pi_h(gA(h)))\leq C_1\tau(h) + C_1$ unless $d_{\textup{Haus}}(A(h),gA(h))\leq C_1.$
    \item $\diam_X(\pi_h(A(g)))\leq C_1\tau+C_1$ unless $d_{\textup{Haus}}(A(g),A(h))\leq C_1.$
    \item If $d_X(A(h),gA(h))>C_1$, then $\diam_X(\pi_h(gA(h)))\leq C_1$, and the same holds replacing $gA(h)$ with $A(g)$.
\end{enumerate}
\end{lem}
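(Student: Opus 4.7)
The plan is to reduce all three statements to a standard projection lemma in hyperbolic geometry and then use acylindricity to upgrade ``large projection'' to ``close Hausdorff distance''.

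First I would invoke the following consequence of thin quadrilaterals: there exists $D_0 = D_0(\delta, n_1)$ such that whenever $Y_1, Y_2$ are $n_1$--quasiconvex subsets of $X$ with $\diam_X(\pi_{Y_1}(Y_2)) > D_0$, a subsegment of $Y_1$ of length at least $\diam_X(\pi_{Y_1}(Y_2)) - D_0$ sits in the $D_0$--neighborhood of $Y_2$. Applied with $Y_1 = A(h)$ and $Y_2 = gA(h)$ (or $A(g)$), this instantly produces statement (3) for any $C_1 \geq D_0$: a large projection forces a point of $A(h)$ to lie within $D_0$ of the other quasi-axis. For (1) and (2), I first dispose of the cases where one of the elements acts elliptically on $X$: then the corresponding $A(\cdot)$ has diameter at most $n_1$, so by coarse Lipschitzness of $\pi_h$ the projection has uniformly bounded diameter, and the inequalities hold vacuously after enlarging $C_1$. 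The substantive case is therefore when the relevant elements are loxodromic.

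Assuming the projection exceeds $C_1 \tau(h) + C_1$ (resp.\ $C_1 \tau + C_1$), the projection lemma provides a subsegment $\alpha \subset A(h)$ of length at least $C_1 \tau(h) + C_1 - D_0$ inside the $D_0$--neighborhood of $gA(h)$ (resp.\ $A(g)$). I then pick $p, q \in \alpha$ with $d_X(p,q) > R(K)$, for a constant $K$ to be extracted below; this is possible provided $C_1$ dominates $R(K)/\tau_0$ plus bookkeeping constants. Each of $p, q$ is within $D_0 + C$ of a translate $gp'$ (resp.\ a point of $A(g)$) with $p' \in A(h)$. Orienting the two axes compatibly --- automatic in (1) since $h$ and $ghg^{-1}$ translate by the same amount $\tau(h)$, and in (2) requiring replacement of $g, h$ by powers $g^b, h^a$ with $a\tau(h) \approx b\tau(g)$ --- one checks that the element $\eta := h^{-1} \cdot ghg^{-1}$ (resp.\ $\eta := h^{-a}g^b$) displaces both $p$ and $q$ by at most $K$. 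Acylindricity bounds the number of such $\eta$; moreover, for $K$ below a uniform threshold, $\eta$ cannot be loxodromic on $X$ since $\tau(\eta) \leq K < \tau_0$, so $\eta$ is elliptic with orbits of diameter at most $K$, hence of order at most $N(K)$. This forces $gE(h)g^{-1}$ to coincide with $E(h)$ up to bounded index, so $g$ essentially normalises $E(h)$; since $A(h)$ is the unique quasi-axis of $E(h)$ up to bounded Hausdorff distance, we conclude $d_{\textup{Haus}}(A(h), gA(h)) \leq C_1$ for a suitable uniform $C_1$.

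The hard part will be organising the constants so that a single $C_1$ works for all three statements and uniformly in $g, h$. The subtlety is that the threshold must grow linearly in $\tau(h)$ (resp.\ $\tau$) to ensure the fellow-travelling segment $\alpha$ accommodates a pair of points $p, q$ far enough apart in the sense of acylindricity; this is where the factor $C_1 \tau$ (rather than a constant) enters essentially. In part (2), the asymmetry between $\tau(g)$ and $\tau(h)$ makes the choice of the commutator-like witness $\eta$ delicate, as one must first pass to integer powers of $g$ and $h$ with matching translation lengths, controlling the resulting additive errors along the fellow-travelling portion.
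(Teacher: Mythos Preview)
Your overall strategy --- thin-quadrilateral fellow-travelling followed by acylindricity --- is exactly what the paper's (very terse) proof invokes, and your treatment of (3) and of the elliptic subcases is fine.  One simplification you miss: the paper obtains (1) directly from (2) by substituting $ghg^{-1}$ for $g$, since $A(ghg^{-1})=gA(h)$ and $\tau(ghg^{-1})=\tau(h)$; this avoids your separate orientation discussion for~(1).

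The real gap is in your loxodromic step.  From the single element $\eta$ moving $p$ and $q$ by at most $K$ you correctly get $\tau(\eta)<\tau_0$, so $\eta$ is elliptic.  But the assertion ``elliptic with orbits of diameter at most $K$, hence of order at most $N(K)$'' is unjustified: an elliptic element for an acylindrical action can have arbitrarily large, even infinite, order --- think of vertex stabilisers in an acylindrical tree action.  And even were $\eta$ of bounded order, a single relation $(h^{-1}ghg^{-1})^m=1$ or $(h^{-a}g^b)^m=1$ does not by itself force $gA(h)$ (resp.\ $A(g)$) to lie at bounded Hausdorff distance from $A(h)$.

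The standard repair, implicit in the paper's reference to \cite[Section 6]{DahmaniGuirardelOsin}, is to produce not one $\eta$ but a \emph{family}.  For (1), set $\eta_i=h^{-i}gh^ig^{-1}$ for $i=0,\ldots,M$; if the fellow-travelling segment has length exceeding $M\tau(h)+R(K)$ plus bookkeeping, each $\eta_i$ moves both $p$ and $q$ by at most a uniform $K$.  Acylindricity then gives $M\leq N(K)$ unless two of the $\eta_i$ coincide --- but $\eta_i=\eta_j$ forces $[g,h^{j-i}]=1$, whence $g\in E(h)$ and $gA(h)=A(h)$.  This delivers the bound $C_1\tau(h)+C_1$ in (1), and the same scheme (with the family $h^{-b_i}g^{i}$) handles~(2).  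Your concern about matching translation lengths in the last paragraph is then absorbed: it is distinctness of the family, not the order of a single displacement witness, that acylindricity rules out.
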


\begin{proof}
The first assertion follows from the second, by replacing $g$ with $ghg^{-1}$.  Our assumption on the order implies the second assertion immediately if $g$ or $h$ has finite order, so we can assume they have infinite order.  Using uniform quasiconvexity of the uniform quasi-axes $A(g),A(h)$, acylindricity, and a thin quadrilateral argument, we obtain the third assertion, along with the conclusion that $\diam_X(\pi_h(A(g)))\leq C_1\tau+C_1$ unless $A(g)$ and $A(h)$ are at finite Hausdorff distance, as required.  (See also \cite[Section 6]{DahmaniGuirardelOsin}.)
\end{proof}

Lemma \ref{lem:acyl-proj-bound} and a standard thin quadrilateral argument give:

\begin{lem}[``Behrstock inequality'']\label{lem:acyl-behrstock}
There exists a uniform constant $C_2$ such that the following holds.  Let $h\in G$ be loxodromic and let $x\in X$.  Then the following hold for all $g\in G$:
\begin{itemize}
    \item Suppose that $E(h)\neq E(ghg^{-1})$.  Then $d_X(\pi_h(x),\pi_h(gA(h)))>C_2$ implies $$d_X(\pi_{ghg^{-1}}(x),\pi_{ghg^{-1}}(A(h)))\leq C_2.$$

    \item Suppose that $E(g)\neq E(h)$.  Then $d_X(\pi_h(x),\pi_h(A(g)))>C_2$ implies $d_X(\pi_g(x),\pi_g(A(h)))\leq C_2$.
\end{itemize}
\end{lem}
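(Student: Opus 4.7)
The plan is to prove both bullets by a single thin-quadrilateral argument in the $\delta$-hyperbolic space $X$. Since $gA(h) = A(ghg^{-1})$ plays the role of a ``second quasi-axis'' exactly as $A(g)$ does, I would focus on the second bullet; the first will follow by the same steps with $A(g)$ replaced throughout by $gA(h)$. As a preliminary, I would observe that under the hypothesis $E(g)\neq E(h)$ (with $g,h$ both loxodromic), the set $\pi_h(A(g))$ has uniformly bounded diameter $D$: acylindricity together with maximality of $E(g),E(h)$ forces $d_{\mathrm{Haus}}(A(g),A(h))>C_1$, since two loxodromic isometries with uniformly Hausdorff-close quasi-axes would share a maximal elementary subgroup; item~(3) of Lemma~\ref{lem:acyl-proj-bound} then yields $\diam_X(\pi_h(A(g)))\leq C_1 =: D$. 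If instead $g$ has bounded orbits, then $A(g)$ has diameter $\leq n_1$ and coarse Lipschitzness of $\pi_h$ bounds the projection directly.

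With this in hand, I would choose $C_2$ large compared to $D$, $\delta$, $n_1$, and the coarse Lipschitz constant $C$, and then run the following standard thin-quadrilateral computation. Set $p=\pi_h(x)$, pick any $y\in A(g)$, and let $b\in\pi_h(y)$, so $b$ lies in $\pi_h(A(g))$ and the hypothesis forces $d_X(p,b)>C_2-D$. Invoking the standard projection lemma for $n_1$-quasiconvex subsets of $\delta$-hyperbolic spaces---that a geodesic $[x,y]$ is uniformly Hausdorff-close to the concatenation $[x,p]\cup[p,b]\cup[b,y]$ whenever $d_X(p,b)$ exceeds a threshold depending on $n_1$ and $\delta$---guarantees that $[x,y]$ passes within $O(\delta+n_1)$ of $p$ and then of $b$. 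To finish, let $y^{*}\in\pi_g(b)\subset A(g)$, so $y^{*}\in\pi_g(A(h))$ by definition. Since $d_X(b,y^{*})=d_X(b,A(g))\leq d_X(b,y)$, the geodesic $[b,y]$ passes within $O(\delta+n_1)$ of $y^{*}$; combining with the previous sentence, $[x,y]$ passes uniformly close to $y^{*}$, so $\pi_g(x)$ is within a uniform constant of $y^{*}$. This gives $d_X(\pi_g(x),\pi_g(A(h)))\leq C_2$ for a suitable choice of $C_2$.

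The main obstacle is essentially bookkeeping: the constant $C_2$ must simultaneously (i) exceed the diameter $D$ of $\pi_h(A(g))$, (ii) be large enough for the projection lemma to force $[x,y]$ through the neighbourhoods of $p$ and $b$, and (iii) absorb all the additive errors coming from thin triangles, the quasiconvexity constants of $A(g)$ and $A(h)$, and the coarse Lipschitz constants of $\pi_g$ and $\pi_h$ in the final inequality. Once Lemma~\ref{lem:acyl-proj-bound} has been invoked to bound $\pi_h(A(g))$ (and, for the first bullet, $\pi_h(gA(h))$), no further acylindricity input is needed, and the remainder is standard hyperbolic geometry.
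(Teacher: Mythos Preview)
Your approach is the same as the paper's---the paper literally writes ``Lemma~\ref{lem:acyl-proj-bound} and a standard thin quadrilateral argument give:'' and leaves it at that---and your thin-quadrilateral computation in the second paragraph is the correct fleshing-out of that sentence.

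There is, however, a slip in your preliminary step, and that step is in any case unnecessary. You argue that $E(g)\neq E(h)$ forces $d_{\mathrm{Haus}}(A(g),A(h))>C_1$ and then invoke item~(3) of Lemma~\ref{lem:acyl-proj-bound} to get $\diam_X(\pi_h(A(g)))\leq C_1$. But item~(3) has the hypothesis $d_X(A(h),A(g))>C_1$, not a Hausdorff-distance hypothesis; two quasi-axes with $E(g)\neq E(h)$ may very well satisfy $d_X(A(g),A(h))=0$ while $d_{\mathrm{Haus}}=\infty$. In fact no \emph{uniform} bound on $\diam_X(\pi_h(A(g)))$ is available here---item~(2) only gives $C_1\tau+C_1$, which depends on $g,h$. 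Fortunately you do not need one: the hypothesis $d_X(\pi_h(x),\pi_h(A(g)))>C_2$ is a set-to-set distance, so for \emph{any} $p\in\pi_h(x)$ and $b\in\pi_h(y)\subset\pi_h(A(g))$ you get $d_X(p,b)>C_2$ directly, with no $D$ correction. With that adjustment (and taking $y=\pi_g(x)$ to streamline the last step), your argument goes through and the constant $C_2$ depends only on $\delta$, $n_1$, and $C$, as required.
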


Now we study the subgroup of $G$ generated by uniform powers of $g,h\in G$ with $h$ loxodromic.

\begin{lem}[acylindrical, loxodromic-loxodromic]\label{lem:loxo-loxodromic}
There exists a uniform $p\in\integers_{>0}$ such that for all $g,h\in G$ that are both loxodromic on $X$, either $\langle g^p,h^p\rangle\cong F_2$ or $g^p,h^p$ commute.
\end{lem}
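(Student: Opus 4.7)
The plan is to prove this via a standard projection-based ping-pong argument, following the lines of \cite{DahmaniGuirardelOsin} and \cite{AbbottDahmani:P-Naive}, splitting into two cases according to the relative position of the quasi-axes $A(g)$ and $A(h)$. Recall that by hypothesis $g,h$ are loxodromic, so both have translation length at least the uniform constant $\tau_0$, and their quasi-axes are $n_1$-quasiconvex.

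First I would dispose of the \emph{parallel case}: suppose $d_{\textup{Haus}}(A(g),A(h))\leq C_1$. Then $h$ (and hence $\langle h\rangle$) coarsely stabilises $A(g)$, which forces $E(g)=E(h)=:E$. Since $E$ is virtually cyclic and the action on $X$ is acylindrical, the kernel of the translation-length homomorphism $E\to\mathbb{Z}$ has order bounded by a uniform constant $N_0$ coming from the acylindricity function $N$, because any element of that kernel nearly fixes a long segment of $A(g)$. Taking $p_1=N_0!$, the powers $g^{p_1}$ and $h^{p_1}$ both lie in the unique maximal $\mathbb{Z}$-subgroup of $E$ and therefore commute.

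Next I would handle the \emph{transverse case}, where $d_{\textup{Haus}}(A(g),A(h))>C_1$. By Lemma \ref{lem:acyl-proj-bound} this gives $\diam_X(\pi_g(A(h)))\leq C_1$ and $\diam_X(\pi_h(A(g)))\leq C_1$, and the parallel argument rules out $E(g)=E(h)$, so Lemma \ref{lem:acyl-behrstock} (Behrstock) applies. Define the disjoint subsets
\[
X_g=\{x\in X:d_X(\pi_g(x),\pi_g(A(h)))>C_2\},\quad X_h=\{x\in X:d_X(\pi_h(x),\pi_h(A(g)))>C_2\};
\]
disjointness is immediate from Lemma \ref{lem:acyl-behrstock}, and both sets are nonempty because $A(g)$ and $A(h)$ are unbounded while the relevant projections have diameter $\leq C_1$. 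For $x\in X_h$, Behrstock gives $d_X(\pi_g(x),\pi_g(A(h)))\leq C_2$, and since $g^{pk}$ acts by translation (up to the uniform quasi-isometry constants) on $A(g)$, we have
\[
d_X(\pi_g(g^{pk}x),\pi_g(A(h)))\geq |pk|\tau_0-C_1-C_2-O(\delta),
\]
which exceeds $C_2$ whenever $p\tau_0$ is larger than a uniform constant depending only on $\delta$, $C_1$, $C_2$ and the coarse-Lipschitz constant of $\pi_g$. The symmetric statement shows $h^{pk}(X_g)\subset X_h$ for all $k\neq 0$. Choosing $p_2$ to be the resulting uniform threshold and invoking the classical ping-pong lemma yields $\langle g^{p_2},h^{p_2}\rangle\cong F_2$.

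Finally I would set $p=\operatorname{lcm}(p_1,p_2)$, which is uniform, and in either case conclude the desired alternative. The main technical point to verify carefully is the projection estimate in Step $3$: that one power of $g^p$ suffices to push any point of $X_h$ into $X_g$, and this requires combining the uniform lower bound $\tau(g)\geq\tau_0$ from acylindricity with the diameter bound on $\pi_g(A(h))$ and the coarse-Lipschitz/retraction constants for $\pi_g$. Every constant appearing is either from Definition \ref{defn:acylindricity} (the functions $R,N$), from $\delta$-hyperbolicity, or from Lemma \ref{lem:acyl-proj-bound} and Lemma \ref{lem:acyl-behrstock}, so the resulting $p$ is genuinely uniform, as required.
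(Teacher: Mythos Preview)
Your approach is essentially the paper's: split into $E(g)=E(h)$ (your ``parallel'' case) versus $E(g)\neq E(h)$, handle the first via the virtually-cyclic structure of $E$ (the paper cites \cite[Lemma~6.8]{Osin:acyl} where you argue directly), and in the second run projection ping-pong on the quasi-axes using the Behrstock inequality.

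There is one slip to correct. In the transverse case you assert that Lemma~\ref{lem:acyl-proj-bound} yields $\diam_X(\pi_g(A(h)))\leq C_1$, but that is part~(3) of the lemma, whose hypothesis is $d_X(A(g),A(h))>C_1$, not $d_{\textup{Haus}}(A(g),A(h))>C_1$; two quasi-axes can intersect while being far apart in Hausdorff distance. Under your actual hypothesis only part~(2) applies, giving the weaker bound $\diam_X(\pi_h(A(g)))\leq C_1\tau+C_1$ with $\tau=\min\{\tau(g),\tau(h)\}$, which is exactly what the paper uses. Consequently your displacement estimate should read roughly $|pk|\tau(a)/r_0 - C_1\tau(a) - C_1 - C_2$ rather than $|pk|\tau_0 - C_1 - C_2$; the point is that both the translation term and the projection-diameter term scale with $\tau(a)$, so once $p$ exceeds a uniform threshold (depending on $r_0,C_1,C_2,\tau_0$) the former dominates and the ping-pong goes through. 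With this correction your argument and the paper's coincide.
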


\begin{proof}
Let $\tau=\min\{\tau(g),\tau(h)\}$.  If $E(g)=E(h)$, then \cite[Lemma 6.8]{Osin:acyl} implies that $g$ and $h$ have uniform nonzero powers that commute and we are done.  By Lemma \ref{lem:acyl-proj-bound}, we can therefore assume $\diam_X(\pi_h(A(g)))\leq C_1\tau+C_1$. 

Let $L=C_2+C$, which is uniform.  For $a\in\{g,h\}$, let $$S_a=\{x\in X:d_X(\pi_a(x),\pi_a(A(b)))>L\},$$
where $b\in\{g,h\}-\{a\}$.  Since $A(a)$ is unbounded, $S_a\neq\emptyset$.  If $x\in S_a$, then by Lemma \ref{lem:acyl-behrstock}, $d_X(\pi_b(x),\pi_b(A(a)))\leq C_2$, so  $x\not\in S_b$.  We have shown $S_g\cap S_h=\emptyset$.

Since $\diam_X(\pi_a(A(b)))\leq C_1\tau(a)+C_1$, there is a uniform $p>0$ such that for all $n\in\integers-\{0\}$,  $$d_X(a^{np}\pi_a(A(b)),\pi_a(A(b)))>10L.$$  Indeed, recall that $\tau(a)\geq \tau_0$.  If $y\in \pi_a(A(b))$, then for any $k\in\integers-\{0\}$, we have $d_X(y,a^ky)\geq |k|\tau(a)/r_0$, where $r_0$ is uniform, since $y$ lies in the uniform quasi-axis $A(a)$.  For all $k\in\integers$ satisfying $|k|>10r_0L/\tau_0 +2C_1r_0(1+1/\tau_0)$, we therefore have 
$$d_X(a^ky,y)> \frac{10L\tau(a)}{\tau_0}+2C_1\tau(a)\left(1+\frac{1}{\tau_0}\right)\geq 10L+2C_1\tau(a)+2C_1,$$
using that $\tau(a)\geq \tau_0$.  Hence $$d_X(a^k\pi_a(A(b)),\pi_a(A(b)))> 10L+2C_1\tau(a)+2C_1-2\diam_X(\pi_a(A(b)))\geq 10L,$$
where the last estimate uses Lemma \ref{lem:acyl-proj-bound}.  So, $d_X(a^k\pi_a(A(b)),\pi_a(A(b)))>10L$, as required.   

We now argue that $h^{np}S_g\subsetneq S_h$ for all $n\neq 0$.  If $x\in S_g$, then by Lemma~\ref{lem:acyl-behrstock}, $$d_X(\pi_h(x),\pi_h(A(g)))\leq C_2,$$ so $$d_X(h^{np}\pi_h(x),\pi_h(A(g)))> 10L-(C_2+C)\geq9L,$$ so since $h^{np}\pi_h(x)=\pi_h(h^{np}x)$, we have $h^{np}x\in S_h$.  This shows $h^{np}S_g\subseteq S_h$.  Since $A(h)$ contains points that are at distance at most, say, $5L+C$ from $\pi_h(A(g))$, we in fact have $h^{np}S_g\subsetneq S_h$ for all $n\neq 0$.  A symmetric argument shows that $g^{np}S_h\subsetneq S_g$ for $n\neq 0$.  So, we have produced nonempty disjoint sets $S_g,S_h$ such that $g^{np}S_h\subsetneq S_g$ and $h^{np}S_g\subsetneq S_h$ for all $n\in\integers-\{0\}$, so by the ping-pong lemma, $\langle g^p,h^p\rangle\cong F_2$, and we are done.
\end{proof}

\begin{lem}[acylindrical, elliptic-loxodromic]\label{lem:elliptic-loxodromic}
There exists uniform $p\in\integers_{>0}$ such that the following holds.  Suppose that $g\in G$ is elliptic on $X$ and $h\in G$ is loxodromic.  Then either $\langle g^p,h^p\rangle\cong \langle g^p\rangle *\langle h^p\rangle$ or $\langle g\rangle \cap E(h)\neq \{1\}$.  
\end{lem}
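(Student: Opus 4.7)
The plan is to adapt the ping-pong argument of Lemma~\ref{lem:loxo-loxodromic} to the mixed elliptic–loxodromic case. As a preliminary reduction, if $g$ has order at most $N(15\delta)$, I choose $p$ to be a uniform common multiple of all such bounds, so that $g^p=1$ and the conclusion holds trivially as the free product $\{1\}*\langle h^p\rangle$. I may therefore assume $g$ has order exceeding $N(15\delta)$, which by the discussion preceding Lemma~\ref{lem:acyl-order-bound} ensures $A(g)$ has uniform diameter at most $n_1$; I also assume $\langle g\rangle\cap E(h)=\{1\}$, as otherwise the second alternative of the conclusion holds.

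The key structural input is that, in an acylindrical action on a hyperbolic space, the maximal elementary subgroup $E(h)$ of a loxodromic element is self-normalising in $G$: any element of $N_G(E(h))$ preserves the endpoint pair $\{h^+,h^-\}\subset\partial X$ and hence lies in $E(h)$ by maximality. Consequently, the hypothesis $\langle g\rangle\cap E(h)=\{1\}$ yields $g^k\notin E(h)$ for every $k\neq 0$, so each conjugate $g^k h g^{-k}$ is loxodromic with elementary subgroup $g^k E(h) g^{-k}$ distinct from $E(h)$. Applying Lemma~\ref{lem:acyl-proj-bound}(3) (or a direct Bowditch-type bounded-overlap argument) then yields a uniform $n_2$ with $\diam\pi_h(g^k A(h))\leq n_2$ for all $k\neq 0$; moreover, since $A(g)$ is uniformly bounded, $\diam\pi_h(A(g))\leq Cn_1+C=:n_3$ is likewise uniform.

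With these estimates I set up ping-pong in the spirit of Lemma~\ref{lem:loxo-loxodromic}. Define $S_h\subset X\cup\partial X$ to be the set of points $\xi$ with $d(\pi_h(\xi),\pi_h(A(g)))>L$, for a uniform constant $L$, and let $S_g$ be the complement. Choosing uniform $p$ with $p\tau_0$ dominant over $L$ and $n_3$ yields $h^{pn}(S_g)\subset S_h$ for $n\neq 0$ by translation along $A(h)$. The reverse inclusion $g^{pk}(S_h)\subset S_g$ for $k\neq 0$ is established via the Behrstock inequality of Lemma~\ref{lem:acyl-behrstock} applied to the two distinct quasi-axes $A(h)$ and $g^{pk}A(h)$: if $\xi\in S_h$, then one concludes (using $g^{pk}$-equivariance of projections) that $\pi_h(g^{pk}\xi)$ is forced into a uniformly bounded neighbourhood of $\pi_h(g^{pk}A(h))$, which by the previous step has diameter at most $n_2$, placing $g^{pk}\xi\in S_g$.

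The main obstacle to overcome is ensuring that the uniform constants $L$ and $p$ do not implicitly depend on $d(A(g),A(h))$, so that $p$ genuinely depends only on the acylindricity data $(\delta,R,N)$. This subtlety is analogous to the one resolved in Lemma~\ref{lem:loxo-loxodromic}, and is overcome by carefully leveraging the uniform diameter bound on $\pi_h(g^k A(h))$ together with the acylindricity-induced lower bound $\tau(h)\geq\tau_0$ controlling how $h$ displaces points along its quasi-axis; the behaviour of $g$ enters only through the uniform boundedness of $A(g)$ and of the projections $\pi_h(g^k A(h))$, both of which are independent of the particular pair $(g,h)$.
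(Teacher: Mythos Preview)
Your overall strategy matches the paper's: reduce to $g$ of large order so that $A(g)$ is uniformly small, use Behrstock between $A(h)$ and its $g^k$--translates, and run ping-pong with $S_h=\{x:d(\pi_h(x),\pi_h(A(g)))>L\}$ and $S_g=X\setminus S_h$. However, the constants are mishandled in a way that creates a real gap.

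Your claim that Lemma~\ref{lem:acyl-proj-bound}(3) gives a \emph{uniform} bound $n_2$ on $\diam\pi_h(g^kA(h))$ is incorrect: part~(3) needs $d_X(A(h),g^kA(h))>C_1$, which there is no reason to hold here (the two quasi-axes both pass near the bounded set $A(g)$). The applicable bound is part~(1), giving $\diam\pi_h(g^kA(h))\le C_1\tau(h)+C_1$, which genuinely depends on $\tau(h)$; an acylindricity count only bounds the overlap by a multiple of $\tau(h)$, not absolutely. Consequently, in your reverse inclusion step you need $L\gtrsim C_1\tau(h)$, so $L$ cannot be taken uniform. The paper resolves this by choosing $L=10(C+C_1+C_2+\kappa+C_1\tau(h))$ and observing that the ratio $L/\tau(h)$ is uniform; since the forward inclusion only needs $p\tau(h)\gtrsim L$, this still yields a uniform $p$. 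Your final paragraph misidentifies the obstacle as dependence on $d(A(g),A(h))$ --- the actual issue is dependence on $\tau(h)$, and the lower bound $\tau(h)\ge\tau_0$ is used in exactly the opposite way from what you suggest.

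There is also a missing link in your reverse inclusion: you argue that $\pi_h(g^{pk}\xi)$ lands near $\pi_h(g^{pk}A(h))$, but membership in $S_g$ requires closeness to $\pi_h(A(g))$. The paper supplies this via the observation that $\pi_h(A(g))$ is $\kappa$--close to $\pi_h(g^kA(h))$ (again using Behrstock and $g^k$--invariance of $A(g)$); you should make this step explicit.
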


\begin{proof}
If $g$ has order at most $N(15\delta)$, then we are done, so suppose not.  Then $A(g)$ has uniformly bounded diameter, so up to uniformly enlarging $C_1$, we can assume $A(g)$ and $\pi_h(A(g))$ both have diameter at most~$C_1$.  Also assume $g^k\not\in E(h)$ for all $k\neq 0$, so that by Lemma \ref{lem:acyl-proj-bound}, $\pi_h(g^kA(h))$ has diameter at most $C_1\tau(h)+C_1$ for all $k\ne0$. 

Let $L\geq C_2+C$ be a constant to be determined.  Let $$S_h=\{x\in X:d_X(\pi_h(x),\pi_h(A(g)))>L\}.$$  Let $S_g=X-S_h$.  

Exactly as in the proof of Lemma \ref{lem:loxo-loxodromic}, as long as $L/\tau(h)$ is bounded above by a uniform constant, there is a uniform $p>0$ such that $h^{np}S_g\subsetneq S_h$ for all $n\in\integers\setminus\{0\}$.

We now show that $L$ can be chosen as above in such a way that the resulting uniform $p$ can be chosen with the additional property that $g^{np}S_h\subsetneq S_g$ for all $n\in\integers\setminus\{0\}$.  

In the remainder of the argument, we will impose additional uniform conditions on $L$ as we go.  Fix $k\in\integers$ and consider the uniformly bounded sets $\pi_h(A(g))$ and $g^k\pi_h(A(g))=\pi_{g^khg^{-k}}(g^kA(g))=\pi_{g^khg^{-k}}(A(g))$. 
 These are respectively $\kappa$--close to $\pi_h(g^kA(h))$ and $\pi_{g^khg^{-k}}(A(h))$, where $\kappa$ is uniform, using Lemma \ref{lem:acyl-behrstock}.  

Suppose $x\in X$ satisfies $d_X(\pi_h(x),\pi_h(A(g)))>L$, so that $d_X(\pi_h(x),\pi_h(g^kA(h)))>L-\kappa$.  Then $d_X(\pi_{g^khg^{-k}}(g^kx),\pi_{g^khg^{-k}}(A(h)))>L-\kappa$.  So by Lemma \ref{lem:acyl-behrstock}, $d_X(\pi_h(g^kx),\pi_h(g^kA(h)))\leq C_2$, and hence $d_X(\pi_h(g^kx),\pi_h(A(g)))\leq C_2 + \kappa + C_1\tau(h)+C_1$.  So, letting $L=10(C+C_1+C_2+\kappa + C_1\tau(h))$, we have that $L/\tau(h)$ is uniform, and $x\in S_h$ implies that $g^kx\in S_g$ for $k\neq 0$.  Moreover, note that $A(g)\subset S_g$, but if $x\in S_h$, then $g^kx\not \in A(g)$, since otherwise $g^{-k}(g^kx)=x\in A(g)$, contradicting $x\in S_h$.  So $A(g)\cap g^kS_h=\emptyset$, whence $g^kS_h\subsetneq S_g$.  Hence the ping-pong lemma implies $\langle g^p,h^p\rangle\cong \langle g^p\rangle *\langle h^p\rangle$.
\end{proof}

\begin{rem}
Arguments  similar to the ones in the preceding lemmas also occur in, for instance, \cite[Proposition 2.1]{AbbottDahmani:P-Naive} and \cite[Lemma 2.3]{HagenSisto:separable}.
\end{rem}

Now we consider the case of a relatively hyperbolic pair $(G,\mathcal P)$.

\begin{proof}[Proof of Theorem \ref{thm:rel-hyp}]
Let $\widehat G=\cay{G,S;\mathcal P}$ be a coned-off Cayley graph and choose $\delta$ so that $\widehat G$ is $\delta$--hyperbolic.  We let $d$ be the word-metric on $G$ associated to $S$ and let $\hat d$ be the graph metric on $\widehat G$.  The natural action of $G$ on $\widehat G$ is acylindrical \cite[Theorem 5.4]{Osin:acyl}.  Now apply Lemma \ref{lem:loxo-loxodromic} and Lemma \ref{lem:elliptic-loxodromic} to obtain a uniform $p$ such that if $g,h\in G$ and at least one is loxodromic, then one of the following holds: $\langle g^p,h^p\rangle\cong \langle g^p\rangle*\langle h^p\rangle$, or $[g^p,h^p]=1$, or $h$ is loxodromic, $g$ is elliptic, and $E(h)\cap \langle g\rangle\neq\{1\}$.  

In either of the first two cases, we are done, so assume the third holds.  For the non-uniform conclusion, we can always assume $g,h$ have infinite order, so we are done since $E(h)$ is virtually cyclic.  For the uniform case, we argue as follows.  First, our hypotheses imply that $g$ having finite order implies that it has bounded order, so we can assume $g$ has infinite order, is elliptic, and $g^n\in E(h)$ for some $n\neq 0$.  Then by \cite[Lemma 6.8]{Osin:acyl}, $n$ can be chosen (non-uniformly) so that $g^n\in\langle h\rangle$, which contradicts ellipticity of $g$ (since $h$ is loxodromic), so we are done.

Hence it remains to consider the case where $\langle g\rangle$ and $\langle h\rangle$ both have bounded orbits on $\widehat G$.  We will use a projection argument analogous to the one in the proof of Lemma \ref{lem:loxo-loxodromic}, except with peripheral cosets replacing axes. For each $P\in\mathcal P$ and $a\in G$, let $\pi_{aP}:G\to aP$ be the coarse closest-point projection from \cite[Section 1.1]{Sisto:relhyp}.  Note that $\pi_{aP}(ax)=a\pi_P(x)$ for all $a,x\in G$.  

Suppose that $a,b\in G$ and $P,Q\in\mathcal P$ satisfy $aP\neq bQ$.  Then there exists $C$, depending only on $(G,d)$ and $\mathcal P$ such that $\pi_{aP_g}(bP_h)$ and $\pi_{bP_h}(aP_g)$ have $d$--diameter bounded by $C$; this follows from \cite[Lemma 1.9, Lemma 1.10, Lemma 1.15]{Sisto:relhyp}.  Also, up to uniformly enlarging $C$, \cite[Lemma 1.13, Lemma 1.15]{Sisto:relhyp} implies: if $x\in G$ satisfies $d(\pi_{bQ}(x),\pi_{bQ}(aP))>C$, then $d(\pi_{aP}(bQ),\pi_{aP}(x))\leq C$.  Let $L=100C$.

Now fix $g,h\in G$ having bounded orbits in $\widehat G$.  By replacing $g$ and $h$ by uniform positive powers, we can assume that there exist $P_g,P_h\in\mathcal P$ such that $g\in aP_ga^{-1}$ and $h\in bP_hb^{-1}$ for some $a,b\in G$.  Since $\mathcal P$ is almost-malnormal \cite{Bowditch:relhyp}, we can assume $aP_g$ and $bP_h$ are unique.  By the hypothesis about the power alternative in $\mathcal P$, we can assume $aP_g\neq bP_h$ (i.e., if $g,h$ belong to a common peripheral subgroup, then whichever of the uniform or non-uniform power alternatives is assumed for peripherals can be applied to the given $g,h$).  We can also assume that $g$ and $h$ have infinite order since, in the case where we are concerned with the uniform power alternative, we have assumed a uniform bound on the orders of torsion elements.

Since $\pi_{aP_g}(bP_h)$ has diameter at most $C$, there is a [uniform] $p$ such that $$d(g^{kp}\pi_{aP_g}(bP_h),\pi_{aP_g}(bP_h))>100L$$ for $k\in\integers-\{0\}$, and the same holds reversing the roles of $aP_g$ and $bP_h$ and replacing $g^p$ with $h^p$.  

Let $X_g=\pi_{aP_g}^{-1}(aP_g-N_L^G(\pi_{aP_g}(bP_h)))$ and let $X_h=\pi_{bP_h}^{-1}(bP_h-N^G_L(\pi_{bP_h}(aP_g)))$. If $x\in X_h$, then $d(\pi_{bP_h}(x),\pi_{bP_h}(aP_g))>L$, so $d(\pi_{aP_g}(x),\pi_{aP_g}(bP_h))\leq C$.  Hence $X_g\cap X_h=\emptyset$.  Moreover, for $k\neq 0$, we have $d(g^{pk}\pi_{aP_g}(x),\pi_{aP_g}(bP_h))>100L-2C>L$, so $g^{kp}x\in X_g$.  In fact, we have shown $g^{kp}X_h\subsetneq X_g$ for $k\neq 0$.  Similarly, $h^{kp}X_g\subsetneq X_h$ for $k\neq 0$.  The ping-pong lemma then implies $\langle g^p,h^p\rangle \cong F_2$.
\end{proof}

Now we turn to  actions on trees.

\begin{proof}[Proof of Proposition \ref{prop:acyl-tree-action}]
Let $G$ act acylindrically and cocompactly on the tree $T$.  Suppose that vertex stabilisers are closed under roots.  This has the following consequence: let $g\in G$ have finite order.  Then $g$ is elliptic, so let $v$ be a vertex with $gv=v$.  Since $g$ has finite order, there exists $n\neq 0$ such that $g^n$ acts on $T$ trivially, so since vertex groups are closed under roots, $g$ acts on $T$ trivially and hence belongs to the unique maximal finite normal subgroup of $G$; see \cite[Theorem 2.23]{DahmaniGuirardelOsin}.  This gives the bound on orders of torsion elements.

Fix $g,h\in G$ with $h$ loxodromic.  Since $G$ acts on $T$ acylindrically by hypothesis, we can apply Lemma \ref{lem:loxo-loxodromic} and Lemma~\ref{lem:elliptic-loxodromic} to obtain a uniform constant $p$ such that either $[g^p,h^p]=1$, or $g$ is elliptic and $g^p\in E(h)$, or $\langle g^p,h^p\rangle\cong \langle g^p\rangle*\langle h^p\rangle$.  If $g,h$ have infinite order, we are done (using \cite[Lemma 6.8]{Osin:acyl} to handle the case where $g$ has a power in $E(h)$).  But if $g$ has finite order, then either we conclude immediately, or, in the case where we require uniformity, we use the bound on torsion elements of vertex groups.

Hence it remains to consider the case where each of $g$ and $h$ is elliptic.  Since vertex stabilisers are closed under roots by hypothesis, $\fix{g}=\stabfix{g}$ and the same is true replacing $g$ with $h$.  Therefore, if $\stabfix{g}\cap\stabfix{h}\neq \emptyset$ then $\fix{g}\cap\fix{h}\neq \emptyset$, and applying our hypotheses to the stabiliser of any vertex fixed by both $g$ and $h$ concludes the proof.  On the other hand, if $\stabfix{g}\cap\stabfix{h}=\emptyset$, Lemma \ref{lem:elliptic-elliptic general case} shows $\langle g,h\rangle\cong F_2$.
\end{proof}

\subsubsection{Closure under roots and a generalisation}\label{subsubsec:root-closed}
Let $G$ act acylindrically on the tree $T$ without inversions.  We saw above that for any elliptic $g\in G$, the subtrees $\fix{g}\subset\stabfix{g}$ have uniformly bounded diameter.  However, this is insufficient to conclude that there is a uniform power $p$ such that $g^p$ fixes $\stabfix{g}$ pointwise, which is needed in the ``elliptic-elliptic'' case of the proof of Proposition \ref{prop:acyl-tree-action}.
The extra ``closure under roots'' property is a bit stronger than is needed to ensure this, but it is a useful condition because it is easy to verify in our examples of interest:

\begin{lem}\label{lem:closed-roots}
Let $G$ act on the tree $T$.  For $v\in\Ver T$, let $\mathcal E_v$ be the set of edges of $T$ incident to $v$ and let $\rho_v:G_v\to G_v/K(G_v)$ be the natural quotient, where $K(G_v)$ is the kernel of the $G_v$--action on $\mathcal E_v$.  Suppose that for all $v\in\Ver T$, the collection $\{\rho_v(G_e):e\in\mathcal E_v\}$ is malnormal in $G_v/K(G_v)$ and the latter group is torsion-free.  Then each vertex-stabiliser $G_w$ is closed under roots.
\end{lem}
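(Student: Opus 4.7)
The plan is to establish the contrapositive. Fix $g \in G$ and $w \in \Ver T$ with $g^n w = w$ for some $n \neq 0$; without loss of generality $n > 0$. I will show that $gw = w$. Since $g^n$ fixes a vertex, $g$ itself must be elliptic on $T$ (the only other possibility would be an edge inversion, which can be ruled out by passing to the barycentric subdivision of $T$ if necessary), so $\fix{g}$ is a non-empty subtree of $T$. Assume for contradiction that $w \notin \fix{g}$, let $u \in \fix{g}$ be the vertex of $\fix{g}$ closest to $w$, and let $e \in \mathcal{E}_u$ be the initial edge of the geodesic $[u, w]$.

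Two observations will drive the contradiction. First, by the choice of $u$, the edge $e$ is not pointwise fixed by $g$; since $gu = u$ and we have ruled out inversions, this forces $ge = e'$ for some $e' \in \mathcal{E}_u$ with $e' \neq e$. In particular $g \notin K(G_u)$, so $\rho_u(g) \neq 1$ in the (by hypothesis torsion-free) quotient $G_u/K(G_u)$. Second, $g^n$ fixes both endpoints of $[u, w]$ and therefore fixes the entire geodesic $[u, w]$ pointwise, so $g^n \in G_e$; since $g$ commutes with $g^n$, the element $g^n$ also fixes $e' = ge$ pointwise, giving $g^n \in G_{e'} = g G_e g^{-1}$.

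Combining the two observations, $\rho_u(g)^n = \rho_u(g^n)$ lies in $\rho_u(G_e) \cap \rho_u(g)\rho_u(G_e)\rho_u(g)^{-1} = \rho_u(G_e) \cap \rho_u(G_{e'})$. Since $e \neq e'$, the hypothesised malnormality of the family $\{\rho_u(G_{\bar e}) : \bar e \in \mathcal{E}_u\}$ in $G_u/K(G_u)$ forces this intersection to be trivial, so $\rho_u(g)^n = 1$. Torsion-freeness of $G_u/K(G_u)$ then yields $\rho_u(g) = 1$, contradicting $ge \neq e$. Hence $w \in \fix{g}$, as required. The argument is essentially routine once malnormality is invoked; the only minor subtlety is the bookkeeping for inversions, which is handled by subdivision.
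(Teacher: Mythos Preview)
Your proof is correct and follows essentially the same line as the paper's: show that $g^n$ lies in two distinct members of the malnormal family, conclude $\rho_u(g^n)=1$, and use torsion-freeness to get $\rho_u(g)=1$. The only organisational difference is that the paper picks an arbitrary fixed vertex $v$ of $g$, shows $g$ fixes the first edge of $[v,w]$, and then inducts on $d_T(v,w)$, whereas you pick $u$ to be the \emph{closest} fixed vertex to $w$ and run the argument once as a contradiction; your choice of $u$ is what lets you bypass the induction. (Your aside about inversions is handled the same way implicitly in the paper: both arguments really use that an elliptic element fixes a vertex.)
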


\begin{proof}
Let $g\in G$ and suppose that $g^nw=w$ for some $n>0$ and $w\in\Ver T$.  Then there is a vertex $v$ with $gv=v$.  Let $\gamma$ be the geodesic segment from $v$ to $w$, which we can assume has length at least $1$, for otherwise we are done.  Let $e$ be the initial edge of $\gamma$.  We first show that $g\in G_e$.  
To this end, note that $g^n$ fixes $v$ and $w$, so it fixes the geodesic segment $[v,w]$ pointwise, and hence $g^n\in G_e$.  Thus, for $1\leq i\leq n-1$, $g^n\in G_{g^ie}$, so $\rho_v(g^n)\in \rho_v(g)^i\rho_v(G_e)\rho_v(g)^{-i}$ for all $i$, and the malnormality and torsion hypotheses imply $g\in K(G_v)$ and hence $g\in G_e$.  Thus $g\in G_{v'}$ where $v'$ is the terminal point of $e$, so since $d_T(v',w)<d_T(v,w)$, it follows by induction that $g\in G_w$, as required. 
\end{proof}

\subsection{Examples}\label{rem:known-examples}
Theorem \ref{thm:meta-pa} recovers various examples, some of which were known by other means.

\begin{cor}\label{cor:3-manifolds}
Let $M$ be a closed connected oriented $3$--manifold.  Then $\paf(\pi_1M)<\infty$ unless $M$ has a  Nil or Sol piece in its prime decomposition.  
\end{cor}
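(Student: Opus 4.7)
The plan is to show $\pi_1 M \in \upaclass$ and then apply Theorem~\ref{thm:meta-pa}.

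Via the Kneser--Milnor prime decomposition, write $\pi_1 M \cong \pi_1 M_1 * \cdots * \pi_1 M_k$ with each $M_i$ prime. This exhibits $\pi_1 M$ as the graph product over the edgeless graph on $k$ vertices, so closure property~(5) of Definition~\ref{defn:WPA-class} reduces the problem to showing $\pi_1 M_i \in \upaclass$ for each $i$.

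By geometrisation, each prime factor $M_i$ either carries one of the eight Thurston geometries or admits a non-trivial JSJ decomposition. The six geometries distinct from Nil and Sol yield fundamental groups that lie directly in $\upaclass$: for $S^3$ and $S^2 \times \mathbb{R}$, $\pi_1 M_i$ is finite or virtually cyclic, in $\upaclass_0$; for $\mathbb{E}^3$, $\pi_1 M_i$ is virtually $\Z^3 \cong \Z \times \Z^2$, handled using $\Z^2 \in \upaclass_0$ together with the closures under direct products and finite-index supergroups; for closed hyperbolic $M_i$, $\pi_1 M_i$ is hyperbolic and so in $\upaclass_0$; for $\mathbb{H}^2 \times \mathbb{R}$, $\pi_1 M_i$ is virtually the direct product of a hyperbolic surface group with $\Z$; and for $\widetilde{SL_2}$, $\pi_1 M_i$ is a central $\Z$-extension of a cocompact Fuchsian group, in $\upaclass_0$.

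Now suppose $M_i$ has non-trivial JSJ decomposition. By hypothesis no Nil or Sol piece appears, so each JSJ piece is either Seifert fibered over a hyperbolic $2$-orbifold with non-empty boundary, or a finite-volume hyperbolic manifold with $\Z^2$ cusp subgroups. In the Bass--Serre tree $T$ for the JSJ splitting, the Seifert vertex groups are central $\Z$-extensions of hyperbolic $2$-orbifold groups and lie in $\upaclass_0$, while the cusped hyperbolic vertex groups are hyperbolic relative to their $\Z^2$ peripherals and lie in $\upaclass$ by closure property~(6). The action of $\pi_1 M_i$ on $T$ is cocompact and acylindrical, a standard consequence of JSJ theory. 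For root-closure, note that $\pi_1 M_i$ is torsion-free (as $M_i$ is aspherical) and, because no Nil piece is allowed, contains no $\Z^3$ subgroup. Given $g \in \pi_1 M_i$ with $g^n \in G_v$ for some $n \neq 0$, either $g \in G_v$ already, or $g$ acts elliptically but fixes a vertex $w \neq v$, forcing $g^n$ to pointwise fix the geodesic from $v$ to $w$; in particular $g^n$ lies in the edge group $G_{e'}$ of the first edge $e'$ emanating from $v$ along this geodesic, which is a peripheral $\Z^2$. The centraliser of $g^n$ in $\pi_1 M_i$ then contains $G_{e'}$, has rank at most $2$, and so equals $G_{e'}$; hence $g \in G_{e'} \subset G_v$. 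Closure property~\eqref{item:tree-comb} of Definition~\ref{defn:WPA-class} therefore places $\pi_1 M_i$ in $\upaclass$.

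The main technical obstacle is the verification of acylindricity and root-closure for the JSJ action on the Bass--Serre tree. Both rely on classical structural results about $3$-manifold JSJ decompositions; the exclusion of Nil pieces is critical for the root-closure step, as it forbids the $\Z^3$ subgroups that would otherwise derail the centraliser argument, while the exclusion of Sol removes the only prime factors that fit into neither the geometric case analysis nor the JSJ combination scheme.
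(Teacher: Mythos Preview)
Your overall strategy---prime decomposition, direct analysis of the geometric cases, and for non-geometric prime pieces the JSJ Bass--Serre tree together with closure property~\eqref{item:tree-comb}---matches the paper's. The geometric case analysis and the identification of vertex groups are correct.

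The gap is in your root-closure argument. You assert that the centraliser of $g^n$ in $\pi_1 M_i$ ``has rank at most $2$, and so equals $G_{e'}$''. This implicitly assumes the centraliser is abelian, which fails in general: if $g^n$ happens to be a power of the regular Seifert fibre of an adjacent Seifert vertex group $G_{v'}$, then that fibre is central in $G_{v'}$ and the centraliser of $g^n$ contains all of $G_{v'}$, a non-abelian group strictly containing $G_{e'}$. The absence of $\Z^3$ subgroups does not help here, since $G_{v'}$ itself contains no $\Z^3$. In that situation you only conclude $g\in G_{v'}$, and when $v'\neq v$ you have not shown $g\in G_v$. This is repairable---one can induct on $d_T(v,\fix g)$, using that fibres of adjacent Seifert pieces never match in a minimal JSJ decomposition---but the argument as written does not go through.

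The paper handles this step differently. It invokes Lemma~\ref{lem:closed-roots}, which deduces root-closure from malnormality of the edge-group images in $G_v/K(G_v)$ together with torsion-freeness of that quotient; the verification of these hypotheses is outsourced to the cited lemmas of Groves--Hull--Liang. Moreover, the citation of Dahmani's combination theorem suggests an alternative route that sidesteps the JSJ tree action entirely: $\pi_1 M_i$ is hyperbolic relative to its Seifert JSJ pieces, each of which lies in $\upaclass_0$ as a central $\Z$--extension of a hyperbolic orbifold group, so closure property~(6) applies directly and no root-closure check is needed.
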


\begin{proof}
By the prime decomposition theorem, \cite[Theorem 0.1]{Dahmani:comb} (see also \cite[Theorem 9.2]{AFW:3man}, \cite[Corollary E]{BigdelyWise}), \cite[Lemma 3.12, Lemma 3.13]{GrovesHullLiang}, Lemma \ref{lem:closed-roots}, and Theorem \ref{thm:meta-pa}, it suffices to consider the case where either $M$ is geometric, or $M$ is a large Seifert fibred manifold with toral boundary (\emph{large} means that the base orbifold has negative Euler characteristic).  In the latter case, $\pi_1M$ is a central extension of a hyperbolic group, and hence in $\upaclass$.  In the former, examining the eight geometries, either Theorem \ref{thm:meta-pa} applies, or $M$ is a Nil or Sol manifold.  
\end{proof}

\begin{rem}\label{rem:3-man}
Most cases of Corollary \ref{cor:3-manifolds} were known by other means: we have already mentioned the 1979 result of Jaco-Shalen \cite{JacoShalen} when $M$ is, for instance, a finite-volume hyperbolic Haken manifold.  The case where $M$ is hyperbolic is covered by the (classical) Theorem \ref{thm:hyperbolic}, and, more generally, the cases where $\pi_1M$ is virtually special are covered by \cite{baudisch1981subgroups} since virtually special groups virtually embed in RAAGs \cite{HaglundWise:special}.  The latter cases include hyperbolic $3$--manifolds, graph manifolds that are either nonpositively-curved or have nonempty boundary, and mixed $3$--manifolds \citelist{\cite{Agol:virtual-haken}\cite{PrzytyckiWise:graph}\cite{Liu:graph}\cite{PrzytyckiWise:mixed}}.
\end{rem}

\begin{cor}\label{cor:free-by-Z}
Let $F$ be a finite-rank free group and let $\Phi\in \operatorname{Out}(F)$.  Then the mapping torus $G=F\rtimes_\Phi\integers$ satisfies $\paf(G)<\infty$.
\end{cor}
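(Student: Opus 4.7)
The plan is to prove that $G=F\rtimes_\Phi\integers$ belongs to the class $\upaclass$ of Definition~\ref{defn:WPA-class}, and then conclude via Theorem~\ref{thm:meta-pa}. I argue by induction on $\operatorname{rk}(F)$. In the base case $\operatorname{rk}(F)\leq 1$, the group $G$ is virtually $\integers^2$, which lies in $\upaclass_0$ as a central extension of the hyperbolic group $\integers$ by $\integers$ (or is itself abelian); condition~(2) of Definition~\ref{defn:WPA-class} then places $G$ in $\upaclass$.

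For the inductive step with $\operatorname{rk}(F)\geq 2$, I would invoke the relative hyperbolicity theorem of Dahmani-Li: $G$ is hyperbolic relative to a finite collection $\mathcal P$ of subgroups, each of which is conjugate to a mapping torus $F'\rtimes_{\phi'}\integers$ where $\phi'$ is a polynomially growing automorphism of a free group $F'$. By condition~(6) of Definition~\ref{defn:WPA-class}, it then suffices to show that every such polynomially growing mapping torus lies in $\upaclass$. Since $G$ is torsion-free, the bounded-torsion hypothesis of Theorem~\ref{thm:rel-hyp} is automatic.

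To handle a polynomially growing mapping torus $H=F'\rtimes_{\phi'}\integers$, I would pass to a finite-index subgroup (condition~(2)) in order to assume that $\phi'$ is unipotent polynomially growing (UPG), and then use the Bestvina-Feighn-Handel structure theory: $F'$ admits a $\phi'$-invariant filtration $1=F'_0\subset F'_1\subset\cdots\subset F'_n=F'$ by free factors with $F'_i=F'_{i-1}*\langle x_i\rangle$ and $\phi'(x_i)=x_iu_i$ for $u_i\in F'_{i-1}$. One then inductively shows that each $F'_i\rtimes_{\phi'}\integers$ lies in $\upaclass$, starting from the base case $F'_1\rtimes_{\phi'}\integers=\integers^2\in\upaclass_0$ and, at each step, expressing $F'_i\rtimes_{\phi'}\integers$ as either a tree-combination (condition~(4)) or a central extension (condition~(1)) of the preceding stage, depending on whether the new generator contributes a new cyclic factor to the center or an HNN stable letter with malnormal cyclic edge group.

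The main obstacle is precisely this last inductive step on the UPG tower: the naive HNN decomposition using $x_i$ as a stable letter has cyclic edge groups sitting in the center of $\integers^2$-like vertex groups at the bottom of the tower, so the corresponding tree action is not acylindrical and condition~(4) of Definition~\ref{defn:WPA-class} does not apply directly. The technical heart of the proof must therefore identify, at each level, either a nontrivial central subgroup generated by fixed elements of $\phi'$ (allowing a reduction via condition~(1) and the inductive hypothesis on lower rank) or a refined relatively hyperbolic structure on the UPG mapping torus in which the peripheral subgroups are strictly simpler; the closure properties of $\upaclass$ then thread the result up the tower. The quantitative bound on $\paf(G)$ advertised in the paper will emerge by tracking the constants through both the Dahmani-Li step and the UPG tower, subject to the effectivity caveat noted at the end of Subsection~\ref{subsec:relative-hyperbolicity}.
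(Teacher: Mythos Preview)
Your overall framework matches the paper's proof exactly: reduce to polynomially growing $\Phi$ via Dahmani--Li, pass to a bounded-index subgroup to make $\Phi$ UPG via Bestvina--Feighn--Handel, and then show the resulting mapping torus lies in $\upaclass$.

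The gap is precisely where you identify it: the UPG step. You correctly observe that the naive HNN decomposition coming from the BFH filtration need not have an acylindrical Bass--Serre tree, so condition~(4) of Definition~\ref{defn:WPA-class} does not apply directly. You then speculate about a dichotomy between a central reduction and a refined relatively hyperbolic structure, but you do not actually establish either. The paper does not attempt to carry out this step from scratch; instead it invokes two external structural results, namely \cite[Proposition~2.5]{AndrewHughesKudlinska} and \cite[Lemma~5.2]{KudlinskaValiunas}, which together supply exactly the decomposition of a UPG free-by-cyclic group needed to place it in $\upaclass$ inductively. In other words, the ``technical heart'' you allude to is outsourced to those references rather than proved inside the paper.

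A minor difference: you frame the argument as an induction on $\operatorname{rk}(F)$ from the outset, whereas the paper's induction is implicit inside the cited results for the UPG case. This is cosmetic; the substantive content is the same, and the missing ingredient in your write-up is the citation (or reproduction) of the Andrew--Hughes--Kudlinska and Kudlinska--Valiunas results.
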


\begin{proof}
We will show that $G\in\upaclass$, from which the result follows by Theorem \ref{thm:meta-pa}.  By \cite[Theorem 3.5]{DahmaniLi}, $G$ is hyperbolic relative to a finite collection of subgroups, each of which is the mapping torus of a polynomially-growing automorphism, it suffices to consider the case where $\Phi$ has polynomial growth.  By \cite[Proposition 3.5]{BFH}, up to replacing $\Phi$ by $\Phi^k$, where $k>0$ depends only on $F$, we can assume that $\Phi$ is \emph{UPG} in the sense of \cite[Definition 3.10]{BFH}.  This has the effect of replacing $G$ by a bounded-index subgroup, which does not affect whether it is in $\upaclass$.  But once $\Phi$ is UPG, we can apply \cite[Proposition 2.5]{AndrewHughesKudlinska} and \cite[Lemma 5.2]{KudlinskaValiunas} to show that $G\in\upaclass$, as required.
\end{proof}

\subsection{$\upaclass$ versus hierarchical hyperbolicity}\label{subsec:questions-HHG}
The class of \emph{hierarchically hyperbolic} groups (HHGs) from \cite{BHS:II} contains hyperbolic groups, and $\integers$ central extensions of hyperbolic groups \cite{HRSS}.  It is closed under graph products \citelist{\cite{BerlyneRussell}\cite{PetytSpriano}}, and any group hyperbolic relative to a finite collection of  HHGs is again an HHG \cite{BHS:II}.  Finite graphs of groups with HHG vertex groups for which the action on the Bass-Serre tree is acylindrical are again HHGs under fairly flexible quasiconvexity conditions on the edge groups \citelist{\cite{BHS:II}\cite{BerlaiRobbio}}.  While  passing to finite-index supergroups need not preserve the property of being an HHG \cite{PetytSpriano}, the above facts, together with Theorem \ref{thm:meta-pa}, provide many hierarchically hyperbolic groups $G$ with $\paf(G)<\infty$.  On the other hand, cocompact lattices in products of locally finite trees are HHGs \cite{BHS:I}, but it is open whether these always satisfy a power alternative, and there is some evidence (see \cite{Bondarenko-squared}) against this.  

\begin{question}
Give natural conditions on a hierarchically hyperbolic group $G$ ensuring that it satisfies a (uniform) power alternative.  Specifically, provided each \emph{standard product region} (see \cite[Section 5]{BHS:II}) $P$ in $G$ has cocompact stabiliser in $G$, does the power alternative hold in $G$ provided it holds in each product region subgroup $\stab{G}{P}$?
\end{question}

We expect \cite[Corollary 14.3]{BHS:I}, \cite[Proposition 9.2]{DHS}, and \cite[Remark 2.10]{BHS:III} to be useful here.  HHGs have uniformly bounded torsion \cite{HaettelHodaPetyt} but need not be virtually torsion-free \cite{Hughes}.  In many cases, like mapping class groups, hierarchical hyperbolicity comes from an acylindrical $G$--action on a hyperbolic complex $Y$ with features reminiscent of fine hyperbolic graphs for relatively hyperbolic groups \cite{BHMS}.  

\begin{question}
Generalise Theorem \ref{thm:rel-hyp} and Propsition \ref{prop:acyl-tree-action} to prove the power alternative for groups acting acylindrically on hyperbolic simplicial complexes satisfying appropriate local conditions.
\end{question}

Such a statement would ideally be strong enough to recover a power alternative for mapping class groups, and give yet another proof for the result about graph products discussed in Remark \ref{rem:graph-prod}.  In the former case, one can imagine applying the desired generalisation to the action on the curve graph, while in the case of graph products, one might use the action on Valiunas' contact graph \cite{Valiunas:graph}.

\bibliographystyle{abbrv}
\bibliography{bibliography}
\end{document}